\theoremstyle{plain}
    \newtheorem{theorem}[figure]{Theorem}
    \newtheorem{proposition}[figure]{Proposition}
    \newtheorem{lemma}[figure]{Lemma}
    \newtheorem{corollary}[figure]{Corollary}
    \newtheorem{subsec}[figure]{}
    \newtheorem*{thma}{Theorem A}
    \newtheorem*{thmb}{Theorem B}
    \newtheorem*{thmc}{Theorem C}
\theoremstyle{definition}
    \newtheorem{definition}[figure]{Definition}
    \newtheorem{example}[figure]{Example}
\theoremstyle{remark}
    \newtheorem{ack}[figure]{Acknowledgements}
\newcommand{\sect}{\setcounter{figure}{0}\section}
\newenvironment{mysubsection}[2][]
{\begin{subsec}\begin{upshape}\begin{bfseries}{#2.}
			\end{bfseries}{#1}}
		{\end{upshape}\end{subsec}}
\renewcommand{\thefigure}{\arabic{section}.\arabic{figure}}
\newenvironment{myeq}[1][]
{\stepcounter{figure}\begin{equation}\tag{\thefigure}{#1}}
{\end{equation}}
\newcommand{\mydiagram}[2][]
{\stepcounter{figure}\begin{equation}
     \tag{\thefigure}{#1}\vcenter{\xymatrix{#2}}\end{equation}}
\newcommand{\NN}{\mathbb{N}}
\newcommand{\A}{\mathfrak{A}}
\newcommand{\AAA}{\mathcal{A}}
\newcommand{\NA}{N\AAA}
\newcommand{\N}{N\A_B}
\renewcommand{\L}{L\A_B}
\newcommand{\Aut}{\operatorname{Aut}}
\newcommand{\AutB}{\Aut_B^B}
\newcommand{\map}{\operatorname{map}}
\newcommand{\mapbb}{ \map^B_B}
\newcommand{\Hom}{\operatorname{Hom}}
\renewcommand{\dim}{\operatorname{dim}}
\newcommand{\Id}{\operatorname{Id}}
\newcommand{\Ker}{\operatorname{Ker}}
\newcommand{\CW}{\operatorname{CW}}
\newcommand{\CWBB}{\CW_B^B}
\newcommand{\Xb}{X_b}
\newcommand{\Yb}{Y_b}
\newcommand{\fX}{f_X}
\newcommand{\fY}{f_Y}
\newcommand{\fb}{f_b}
\newcommand{\gb}{g_b}
\newcommand{\pX}{p_X}
\newcommand{\pY}{p_Y}
\newcommand{\fXb}{f_{\Xb}}
\newcommand{\fYb}{f_{\Yb}}
\newcommand{\XBY}{X\times _B Y}
\newcommand{\pBq}{p\times _B q}
\newcommand{\fBg}{f\times _B g}
\newcommand{\nprod}{{\bf X}}
\newcommand{\pprod}{p_1\times_B \ldots \times_B p_n}
\begin{document}
\title{MONOIDS RELATED TO SELF HOMOTOPY EQUIVALENCES OF FIBRED PRODUCT}
\author{Gopal Chandra Dutta}
\author{Debasis Sen}

\address{Department of Mathematics and Statistics\\
		Indian Institute of Technology, Kanpur\\ Uttar Pradesh 208016\\India}
	
\email{gopald@iitk.ac.in}
	
\address{Department of Mathematics and Statistics\\
		Indian Institute of Technology, Kanpur\\ Uttar Pradesh 208016\\India}
	
\email{debasis@iitk.ac.in}

\date{\today}
	
\subjclass[2010]{Primary 55R70, 55P10 ; \ Secondary: 55R05, 55Q05.}
\keywords{Self homotopy equivalences, monoids, reducibility, fibred product.}
	
\begin{abstract}
We introduce a nested sequence of monoids related to self homotopy equivalences of fibrewise pointed spaces, such that the limit is the group of homotopy classes of  fibrewise pointed self-equivalences. We explore this monoid for fibred product in terms of individual spaces. Further we study two related invariants associated to these monoids: self closeness number and self length. 
\end{abstract}
\maketitle

\maketitle
	
\sect{Introduction}\label{int}
The group of homotopy classes of self homotopy equivalences $\Aut(X)$ of a connected based space $X$ has been studied extensively for a long time (cf. \cite{agshe, rutter}). Various subgroups of $\Aut(X)$ have also been studied: e.g. the subgroup consists of the self-homotopy equivalences that induces the identity homomorphism on homotopy groups upto certain range (cf. \cite{sheihch,sheip,smiiahg}). Motivated by this, Choi and Lee introduced a sequence of monoids related to $\Aut(X)$ (see \cite{clcngh}). They defined $\AAA^k_{\#}(X) = \{f\in[X,X]~~:~~ \pi_i(f) \text{ is an isomorphism for } 0\leq i\leq k\}$. Clearly $\Aut(X) \subseteq \AAA^{k+1}_{\#}(X) \subseteq \AAA^{k}_{\#}(X).$ Related to this chain of inclusions, they also studied two invariants: self-closeness number and self length (see \cite{clslp, jlfcs}).

In this paper we introduce similar monoids for parametrized homotopy and study them for a fibred product in terms of the individual spaces. Let $B$ be a connected pointed CW-complex. A \emph{fibrewise pointed space} over $B$ consists of a space $X$ over $B$ with a section: $$B\xrightarrow{  s } X\xrightarrow{p} B,\text{               }p\circ s = \Id_B. $$ We denote it by the triple $(X,p,s)$ (\cite{cjfht}). Sometimes it is also called an \emph{ex-space} (cf. \cite{maypht}). A \emph{fibrewise pointed map} $f\colon X\to Y$ is a map such that $q\circ f= p$ and $f\circ s=t$. The collection of all such maps from $X$ to $Y$ is denoted by $\mapbb(X,Y)$. Two fibrewise pointed maps $f,g\colon X\to Y$ are called \emph{fibrewise pointed homotopic} if there exists a homotopy  $h_{\bar{t}}\colon X\to Y$ such that $h_0 = f, ~~ h_1 = g$ and $q\circ h_{\bar{t}} = p, ~~ h_{\bar{t}}\circ s = t, \forall ~~ \bar{t}\in I = [0,1]$ and written as $f\simeq^B _{B} g$. This is an equivalence relation on the set $\mapbb(X,Y)$ and the set of equivalence classes is denoted by $[X,Y]^B_B$. For notational convenience, we do not distinguish between a map $f\colon X\to Y$ and that of its homotopy class in $[X,Y]^B_B$.
The group of homotopy classes of fibrewise pointed self-equivalences of $X$ is denoted by $\AutB(X)$. This group has been studied for fibred product of spaces by several authors (cf. \cite{Gps, psfhe}). The group of homotopy classes of self equivalences for cartesian product ($B =\ast$) was also studied before (cf. \cite{ Hgsep, prshe, stshe})

 Let $\CWBB$ denote the full subcategory of fibrewise pointed spaces $(X,p,s)$ with $p\colon X\to B$ being a fibration,  the total space $X$ and fibre of $p$ have the homotopy type of connected CW-complexes. 
\begin{definition}\label{dmon}
For a space $X \in \CWBB$ and a non-negative integer $k$, we define $$\A^k_\#(X):=\{f\in [X,X]^B_B :~~~ \pi_{i}(f)\colon \pi_{i}(X)\xrightarrow{\cong} \pi_{i}(X), ~~0\leq i\leq k \}.$$
\end{definition}
\noindent If $B = \ast$ then $\A^k_{\#}(X) = \AAA^k_{\#}(X).$ Under the composition of maps the set $\A^k_\#(X)$ becomes a submonoid of $[X,X]^B_B$. If $m>n$, then $\A^m_{\#}(X)\subseteq \A^n_{\#}(X)$.  Therefore we have the following chain of inclusion, 
\begin{myeq}\label{eqchain}
\AutB(X)\subseteq \cdots \subseteq \A^{n+1}_{\#}(X)\subseteq \A^n_{\#}(X)\subseteq \cdots \subseteq \A^1_{\#}(X)\subseteq \A^0_{\#}(X) = [X,X]^B_B.
\end{myeq}
In the first part of this paper we study the monoid $\A^k_{\#}$ for fibred product. This is an extension of the results of Jun and Lee on factorisation of the monoid $\AAA^k(X\times Y)$ of product spaces (cf. \cite{jlfcs}). Recall that the fibred product of $(X, p, s)$ and $(Y,q, t)\in \CWBB$ is denoted by  $\XBY$ and is defined by the pull-back of the diagram 
$$
\xymatrix{
X \ar[rr]^{p} && B && Y\ar[ll]_{q}
}.
$$
We have a projection map to $B,$ given by $\pBq\colon \XBY\to B$, which is a fibration with section $(s,t)\colon B\to \XBY$. These maps are explicitly defined as $$(\pBq)(x,y):= p(x)=q(y) \text{ and } (s,t)(b):=(s(b),t(b)).$$

$$
\xymatrix{
\XBY \ar@/^2.0pc/[rrrrd]^{\fX} \ar @/_2.0pc/[rrddd]_{\fY}\ar[rrd]_{f}  \\
&& \XBY \ar[dd]_{p_Y} \ar[rr]^{p_X} \ar@{-->}[rrdd]^{\pBq}  &&  X \ar[dd]^{p}  \ar@/^1pc/[ll]^{\iota_X} \\\\
&& Y \ar[rr]_{q} \ar@/_1pc/[uu]_{\iota_Y} && B \ar@/_2.0pc/[uu]_{s} \ar@/^2.0pc/[ll]^{t}
}
 $$
\noindent The section maps $s,t$ induce maps $$\iota_X\colon X\to \XBY,~~\iota_Y\colon Y\to \XBY,$$ which are defined as $$\iota_X(x):=(x,tp(x)),~~\iota_Y(y):=(sq(y),y).$$ Clearly $\iota_X, ~\iota_Y$ are sections of $p_X, ~ q_Y$ respectively. A self-map $f\colon \XBY\to \XBY$ is equivalent to a pair of maps $f = (\fX, \fY)$ where $\fX: \XBY\to X$ and $\fY: \XBY\to Y$ are maps in $\CWBB$. Given  $f =(\fX, \fY)\in\mapbb(\XBY, \XBY)$, then $(p_X,\fY), (\fX,p_Y)$ are self-maps of $\XBY$. 

 As in \cite{jlfcs}, we need the following notion of reducibility to state our results.
\begin{definition}
A map $f=(\fX,\fY)\in \A^k_{\#}(\XBY)$ is said to be \emph{$k$-reducible} if $(p_X,\fY),~~ (\fX,p_Y)\in \A^k_{\#}(\XBY)$.
\end{definition}
Recall that, in (cf. \cite[Section 1]{psfhe}), a map $f\in \AutB(\XBY)$ was defined to be reducible if $(\fX, p_Y)$ and $(p_X, \fY)$ both are in $\AutB(\XBY)$. Therefore a map $f\in \AutB(\XBY)$ is reducible if it is $k$-reducible for any non-negative integer $k$.  
We have two natural sub-monoids of  $\A^k_{\#}(\XBY$): 
\begin{myeq}\label{eq1}
\begin{split}
\A^k_{\#,X}(\XBY) & :=\{f\in \A^k_{\#}(\XBY): ~ p_X\circ f= p_X \}, \\
\A^k_{\#,Y}(\XBY) & :=\{f\in \A^k_{\#}(\XBY): ~ p_Y\circ f= p_Y \}.
\end{split}
\end{myeq}
We show that the product of this two sub-monoids is the whole of $\A^k_{\#}(\XBY)$. This can be thought as a monoid analogue of the result of Pave\v{s}i\'{c} \cite[Theorem 2.5]{psfhe} and a parametrized generalization of \cite[Theorem 1]{jlfcs}. We define a further submonoid, $$\A^{k,X}_{\#,Y}(\XBY):= \{(\fX,p_Y)\in \A^k_{\#,Y}(\XBY):~~ (\fX,p_Y)\circ \iota_X = \iota_X\}.$$ Similarly $\A^{k,Y}_{\#,X}(\XBY)$ is defined. By a split exact sequence of monoids, we mean usual definition in the case of groups forgetting the existence of inverse. 
 
\begin{thma}\label{thma}

Let $X,Y\in \CWBB.$ Then we have split exact sequences of monoids,
$$0\to\A^{k,X}_{\#,Y}(\XBY)\to\A^k_{\#,Y}(\XBY)\xrightarrow[]{}\A^k_{\#}(X)\to 0,$$ 
$$0\to\A^{k,Y}_{\#,X}(\XBY)\to\A^k_{\#,X}(\XBY)\xrightarrow[]{}\A^k_{\#}(Y)\to 0.$$ 

\noindent Moreover if each map $f=(\fX,\fY)\in \A^k_{\#}(\XBY)$ is $k$-reducible and satisfying the condition $\fY\simeq^B_B \fY\circ \iota_Y\circ p_Y$, then $$\A^k_{\#}(\XBY) = \A^k_{\#,X}(\XBY)\cdot \A^k_{\#,Y}(\XBY).$$
Similar result also holds for higher fibre product.
\end{thma}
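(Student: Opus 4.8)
The plan is to realise both split exact sequences from a single ``evaluate along the canonical section'' homomorphism, reducing every homotopy-theoretic assertion to a computation on the fibre of a fibration equipped with a section. I will treat the first sequence in detail; the second is obtained by interchanging $X$ and $Y$. First I record the structural facts to be used. Since $p\colon X\to B$ and $q\colon Y\to B$ are fibrations, $p_Y\colon \XBY\to Y$ is the pullback of $p$ along $q$, hence again a fibration whose fibre is the fibre $F_X$ of $p$ and for which $\iota_Y$ is a section; dually $p_X$ is a fibration with fibre $F_Y$ and section $\iota_X$, and $p\colon X\to B$ is a fibration with fibre $F_X$ and section $s$. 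Each such ``fibration with a section'' yields a split short exact sequence on homotopy groups, the section giving the splitting. I then define
$$\Phi\colon \A^k_{\#,Y}(\XBY)\longrightarrow [X,X]^B_B,\qquad \Phi(f):=\fX\circ\iota_X=p_X\circ f\circ\iota_X,$$
check via $p\circ\fX=\pBq$ and $\fX\circ(s,t)=s$ that $\Phi(f)$ is a fibrewise pointed self-map of $X$, and set the candidate splitting $\sigma\colon\A^k_\#(X)\to\A^k_{\#,Y}(\XBY)$, $\sigma(g):=(g\circ p_X,\,p_Y)$.

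The heart of the argument, and the step I expect to be the main obstacle, is to show $\Phi$ lands in $\A^k_\#(X)$ and, conversely, that $\sigma(g)\in\A^k_\#(\XBY)$; both follow from one observation. For $f\in\A^k_{\#,Y}(\XBY)$ the condition $p_Y\circ f=p_Y$ says $f$ is a self-map of the fibration $p_Y$ over $\Id_Y$, so it restricts to $\bar f\colon F_X\to F_X$; the splitting of $0\to\pi_i(F_X)\to\pi_i(\XBY)\to\pi_i(Y)\to 0$ makes $f_*$ upper triangular with diagonal blocks $\bar f_*$ and $\Id$, whence $f\in\A^k_\#(\XBY)$ if and only if $\bar f_*$ is an isomorphism for $0\le i\le k$. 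The same triangular principle applied to $p\colon X\to B$ shows $\Phi(f)\in\A^k_\#(X)$ if and only if the fibre restriction $\overline{\Phi(f)}_*$ is an isomorphism in that range. Finally I identify these two fibre maps: the restriction of $\iota_X$ to $F_X$ is exactly the inclusion of the fibre of $p_Y$, and the restriction of $p_X$ to that fibre is the identity, so $\overline{\Phi(f)}=\bar f$ and hence $f\in\A^k_\#(\XBY)\Leftrightarrow\Phi(f)\in\A^k_\#(X)$. This equivalence gives $\Phi(f)\in\A^k_\#(X)$, and applied to $\sigma(g)$, where $\Phi(\sigma(g))=g$, it gives $\sigma(g)\in\A^k_\#(\XBY)$. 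The delicate points are the low-degree cases $i=0,1$ (handled by connectivity and by the splitting, which forces bijectivity of $\bar f_*$ even in the non-abelian range) and basepoint compatibility, which is guaranteed by the sections.

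The remaining structural claims are formal. That $\Phi$ is a monoid homomorphism rests on the identity $g\circ\iota_X=\iota_X\circ\Phi(g)$ for $g\in\A^k_{\#,Y}(\XBY)$, checked coordinatewise using $p\circ g_X=\pBq$; it gives $\Phi(f\circ g)=p_X\circ f\circ g\circ\iota_X=p_X\circ f\circ\iota_X\circ\Phi(g)=\Phi(f)\circ\Phi(g)$, and $\Phi(\Id)=\Id_X$. A direct coordinate computation shows $\sigma$ is a homomorphism with $\Phi\circ\sigma=\Id$, so the sequence splits. Unwinding $(\fX,p_Y)\circ\iota_X=\iota_X$ shows its only nontrivial coordinate is $\fX\circ\iota_X=\Id_X$, i.e. $\A^{k,X}_{\#,Y}(\XBY)=\Phi^{-1}(\Id_X)=\Ker\Phi$; as the inclusion is injective with image $\Ker\Phi$, the first split exact sequence follows, and the second is symmetric.

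For the factorisation $\A^k_\#(\XBY)=\A^k_{\#,X}(\XBY)\cdot\A^k_{\#,Y}(\XBY)$, given $f=(\fX,\fY)\in\A^k_\#(\XBY)$ I set $v:=(\fX,p_Y)$ and $u:=(p_X,\ \fY\circ\iota_Y\circ p_Y)$. By hypothesis $f$ is $k$-reducible, so $v=(\fX,p_Y)\in\A^k_\#(\XBY)$ and thus $v\in\A^k_{\#,Y}(\XBY)$; and since $\fY\simeq^B_B \fY\circ\iota_Y\circ p_Y$ we get $u\simeq^B_B(p_X,\fY)\in\A^k_\#(\XBY)$, so $u\in\A^k_{\#,X}(\XBY)$. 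A coordinatewise composition, using $p_Y\circ(\fX,p_Y)=p_Y$, gives $u\circ v=(\fX,\ \fY\circ\iota_Y\circ p_Y)\simeq^B_B(\fX,\fY)=f$. The reverse inclusion is immediate since both factors lie in the monoid $\A^k_\#(\XBY)$. The statement for an $n$-fold fibred product follows by the same argument and induction on $n$, writing $X_1\times_B\cdots\times_B X_n=(X_1\times_B\cdots\times_B X_{n-1})\times_B X_n$.
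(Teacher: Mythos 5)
Your proposal is correct, and its overall architecture (the evaluation homomorphism $f\mapsto \fX\circ\iota_X$, the splitting $g\mapsto (g\circ p_X,p_Y)$, the kernel identification, and the composition $(p_X,\fY\circ\iota_Y\circ p_Y)\circ(\fX,p_Y)\simeq^B_B f$ for the factorisation) coincides with the paper's proof of Theorem \ref{ftsg}. Where you genuinely diverge is in the key well-definedness step, i.e.\ the analogue of Proposition \ref{mhom}. The paper restricts everything to the fibre over a point of $B$ (Proposition \ref{iprop}, via the ladder of long exact sequences of $X_b\to X\to B$) and then quotes the Jun--Lee lemma for the ordinary product $\Xb\times\Yb$, which rests on the invertibility of the $2\times 2$ matrix of Equation \ref{eqmat}. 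You instead exploit the \emph{other} fibration in the square: $p_Y\colon\XBY\to Y$ is a fibration with fibre $F_X$ and section $\iota_Y$, any $f$ with $p_Y\circ f=p_Y$ restricts to $\bar f\colon F_X\to F_X$, and the section-split short exact sequences $1\to\pi_i(F_X)\to\pi_i(\XBY)\to\pi_i(Y)\to 1$ together with the short five lemma give $f\in\A^k_{\#}(\XBY)\Leftrightarrow\bar f_*$ iso $\Leftrightarrow\Phi(f)\in\A^k_{\#}(X)$, after the (correct) identification $\overline{\Phi(f)}=\bar f$. Your route is more self-contained --- it needs no reduction to the non-parametrized case and no external citation --- and it makes transparent why $\Phi$ and $\sigma$ preserve the $\A^k_{\#}$ condition simultaneously; you also rightly flag the only delicate points ($i=0,1$, handled by connectivity of the fibres in $\CWBB$ and by splitness of the extension of possibly non-abelian groups). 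The paper's route, on the other hand, sets up Proposition \ref{iprop} and Corollary \ref{kred} as reusable tools that the later sections (reducibility criteria, self-closeness number) depend on repeatedly, which your argument does not supply. Your treatment of the higher fibred product by induction on $n$ is terser than the paper's Section \ref{shfb} (which uses explicit permutation homeomorphisms), but is adequate for the claim as stated.
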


\noindent (See Theorem \ref{ftsg}, Theorem \ref{mthnps}, Proposition \ref{splthp} for details.)

Booth and Heath obtained an exact sequence relating $\AutB(\XBY)$ and $\AutB(X)\times \AutB(Y)$ (\cite[Theorem 3.1]{Gps}). We obtain a similar result for the monoid $\A_{\#}.$

\begin{thmb}\label{thmb}
Assume that $X$ is a fibrewise group-like space such that $[X\wedge_B Y,X]^B_B = \{\ast\}$ and $[Y,X]^B_B = \{\ast\}$. Then we have the following short exact sequence of monoids,
$$0\to \A^{k,Y}_{\#,X}(X\times_B Y)\to \A^k_{\#}(X\times_B Y)\xrightarrow[]{} \A^k_{\#}(X)\times \A^k_{\#}(Y)\to 0.$$
Similar result also holds for higher fibred product.

\end{thmb}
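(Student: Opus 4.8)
The plan is to build an explicit homomorphism
$$\Phi\colon\A^k_\#(\XBY)\to\A^k_\#(X)\times\A^k_\#(Y),\qquad \Phi(f):=(\fX\circ\iota_X,\ \fY\circ\iota_Y)\ \ \text{for}\ f=(\fX,\fY),$$
and then to verify that it is a well-defined surjective monoid map whose kernel is exactly $\A^{k,Y}_{\#,X}(\XBY)$; the left-hand inclusion being injective is automatic, so these three facts give the asserted short exact sequence. I would first record the algebraic input that drives everything: the fibrewise inclusion of the fibrewise wedge sits in a fibrewise cofibre sequence $X\vee_B Y\to\XBY\to X\wedge_B Y$, and since $X$ is fibrewise group-like the Puppe sequence yields an exact
$$[X\wedge_B Y,X]^B_B\to[\XBY,X]^B_B\to[X\vee_B Y,X]^B_B=[X,X]^B_B\times[Y,X]^B_B.$$
The hypothesis $[X\wedge_B Y,X]^B_B=\{\ast\}$ makes the second arrow injective, while $[Y,X]^B_B=\{\ast\}$ collapses the target to $[X,X]^B_B$; together with the section $g\mapsto g\circ\pX$ of restriction along $\iota_X$ this shows that restriction along $\iota_X$ is a bijection $[\XBY,X]^B_B\xrightarrow{\cong}[X,X]^B_B$, with inverse $\bar g\mapsto\bar g\circ\pX$. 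In particular every component obeys $\fX\simeq^B_B(\fX\circ\iota_X)\circ\pX$, i.e. $\pX\circ f\simeq^B_B(\fX\circ\iota_X)\circ\pX$, so $f$ covers the self-map $\bar\fX:=\fX\circ\iota_X$ of the fibration $\pX$. I expect this factorisation to be the main obstacle: it is the one place where both triviality hypotheses and the group-like structure are genuinely used, and everything downstream is bookkeeping on induced maps of homotopy groups.

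For that bookkeeping I would fix basepoints over $b_0\in B$ and use that $\pX,\pY$ are fibrations with fibres the fibre $F_Y$ of $q$ and the fibre $F_X$ of $p$, split by $\iota_X,\iota_Y$, while $\pBq$ has fibre $F_X\times F_Y$; hence $\pi_i(\XBY)\cong\pi_i(F_X)\oplus\pi_i(F_Y)\oplus\pi_i(B)$. Fibrewiseness forces $f_*$ to be the identity on the $\pi_i(B)$-summand and to preserve $\pi_i(F_X)\oplus\pi_i(F_Y)$, on which it acts by some $\phi$; the covering relation $\pX f\simeq^B_B\bar\fX\pX$ further shows $f_*$ preserves $\ker(\pX)_*=\pi_i(F_Y)$, so $\phi$ is lower triangular with respect to $\pi_i(F_X)\oplus\pi_i(F_Y)$. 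Since $f\in\A^k_\#(\XBY)$ makes $\phi$ an isomorphism for $i\le k$, both diagonal blocks are isomorphisms, and tracing the definitions identifies these blocks as $(\fX\circ\iota_X)_*|_{\pi_i(F_X)}$ and $(\fY\circ\iota_Y)_*|_{\pi_i(F_Y)}$. Upgrading along the section splittings of $X\to B$ and $Y\to B$ then gives $\fX\circ\iota_X\in\A^k_\#(X)$ and $\fY\circ\iota_Y\in\A^k_\#(Y)$, so $\Phi$ is well defined.

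It remains to verify multiplicativity, the kernel, and surjectivity. Multiplicativity reduces, in the $X$-coordinate, to $\fX\circ g\circ\iota_X\simeq^B_B(\fX\circ\iota_X)\circ(g_X\circ\iota_X)$, which is immediate from $\fX\simeq^B_B(\fX\circ\iota_X)\pX$ and $\pX\circ g=g_X$; in the $Y$-coordinate it reduces to $g\circ\iota_Y\simeq^B_B\iota_Y\circ(g_Y\circ\iota_Y)$, where the two maps agree in their $\pY$-component while their $\pX$-components are maps $Y\to X$, hence both null by $[Y,X]^B_B=\{\ast\}$ and therefore fibrewise homotopic. For the kernel, $\Phi(f)$ is the unit iff $\fX\circ\iota_X\simeq^B_B\Id_X$ and $\fY\circ\iota_Y\simeq^B_B\Id_Y$; the first relation with $\fX\simeq^B_B(\fX\circ\iota_X)\pX$ forces $\fX\simeq^B_B\pX$, so $f=(\pX,\fY)$ with $\fY\circ\iota_Y\simeq^B_B\Id_Y$, which is precisely membership in $\A^{k,Y}_{\#,X}(\XBY)$.

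Finally, surjectivity is witnessed by $(u,v)\mapsto(u\circ\pX,\ v\circ\pY)$: this is a fibrewise pointed self-map, restricts along $\iota_X,\iota_Y$ back to $(u,v)$, and on fibres equals $u|_{F_X}\times v|_{F_Y}$, whose induced map is block diagonal with isomorphism blocks, so it lies in $\A^k_\#(\XBY)$. The higher fibred product statement then follows by iterating this factorisation one factor at a time, applying the two-factor case to $X\times_B(Y_2\times_B\cdots\times_B Y_n)$.
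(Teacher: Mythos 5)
Your proposal is correct and follows essentially the same route as the paper: the same homomorphism $(\fX,\fY)\mapsto(\fX\circ\iota_X,\fY\circ\iota_Y)$, the same key factorization $\fX\simeq^B_B(\fX\circ\iota_X)\circ\pX$ extracted from the fibrewise Puppe sequence of $X\vee_B Y\to\XBY\to X\wedge_B Y$ using the group-like structure together with the two triviality hypotheses, and the same splitting $(u,v)\mapsto(u\circ\pX,v\circ\pY)$. The only differences are cosmetic --- you identify the kernel directly instead of first showing $\A^{k,X}_{\#,Y}(\XBY)$ is trivial, and you obtain the triangularity of $\pi_i(f)$ from the covering relation rather than from restriction to a fibre --- so nothing further is needed.
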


\noindent (See Theorem \ref{hbthm}, Theorem \ref{hbfn} for details.)

Observe that for finite dimensional $X$, we have $\A^{\dim X}_{\#}(X) = \AutB(X) $. In general, $\lim\limits_{k\to\infty}\A^k_{\#}(X) = \AutB(X)$, by Whitehead's theorem and the fact that a homotopy equivalence between ex-spaces with the projection maps fibrations, is actually a fibre homotopy equivalence (cf. \cite{dpuf}). This motivates us to define parametrized \emph{self-closeness number} of $X\in \CWBB$ as the minimum non-negative integer $k$ such that  $\A^{k}_{\#}(X) = \AutB(X)$. We denote it by $\N(X)$. This is a generalisation of self-closeness number introduced by Choi, Lee as $\NA(X) := \min\big\{k: \AAA^k_{\#}(X) = \Aut(X), ~~k\geq 0\big\}$ (cf. \cite{clcngh}). They also introduced a related invariant \emph{self-length} of a space $X$ (\cite{clslp}). We define similar self-length in our parametrized setting as the number of strict inclusions in the chain of Equation \ref{eqchain}. We denote it by $\L(X)$. We obtain several properties of $\N(X)$ and $\L(X),$ and prove the following result.

\begin{thmc}
Let $X,Y\in \CWBB$. 
\begin{enumerate}[(i)]
\item  Then the self-closeness number satisfies $$ \N(\XBY) \geq \max \big\{ \N(X), \N(Y) \big\}.$$ In addition, the equality hold if for any non-negative integer $k$, each map in $\A^k(\XBY)$ is $k$-reducible.
 
\item For the self-length, we have 
 $$\L(X) + \L(Y) -  \#\big(I_{(X)}\cap I_{(Y)}\big)\leq \L(\XBY)$$
Moreover the equality holds if $Y$ is fibrewise group-like satisfying $[X\wedge_B Y,Y]^B_B = \{\ast\}$ and $[X,Y]^B_B = \{\ast\}$.
\end{enumerate}
\end{thmc}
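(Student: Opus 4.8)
The plan is to run everything through the fibrewise five lemma together with the split short exact sequences of Theorem~A (and Theorem~B for the equality in (ii)). Write $F_X,F_Y$ for the fibres of $p\colon X\to B$ and $q\colon Y\to B$, so that $F_X\times F_Y$ is the fibre of $\pBq$. The one computation I will use repeatedly is that for $g\in\A^k_\#(X)$ the map $\sigma(g):=(g\circ p_X,p_Y)$ preserves $p_Y$ and restricts on the fibre $F_X\times F_Y$ to $\bar g\times\Id_{F_Y}$, where $\bar g=g|_{F_X}$ (dually for $Y$); hence $\sigma(g)\in\A^k_{\#,Y}(\XBY)$. By the five lemma applied to $F_X\times F_Y\to\XBY\to B$ and to $F_X\to X\to B$, the map $\sigma(g)$ is a fibrewise pointed self-equivalence if and only if $g\in\AutB(X)$, and $\sigma(g)\in\A^{k+1}_\#(\XBY)$ if and only if $g\in\A^{k+1}_\#(X)$.

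For the inequality in (i), suppose $\A^k_\#(\XBY)=\AutB(\XBY)$. Given $g\in\A^k_\#(X)$, the element $\sigma(g)$ lies in $\A^k_\#(\XBY)=\AutB(\XBY)$, so it is an equivalence, whence $g\in\AutB(X)$ by the criterion above. Thus $\A^k_\#(X)=\AutB(X)$ and $\N(X)\le k$; taking $k=\N(\XBY)$ gives $\N(\XBY)\ge\N(X)$, and symmetrically $\N(\XBY)\ge\N(Y)$. The key auxiliary fact for the reverse inequality $\N(\XBY)\le m$ is: for $k\ge\N(X)$ one has $\A^k_{\#,Y}(\XBY)\subseteq\AutB(\XBY)$, and dually for $k\ge\N(Y)$ one has $\A^k_{\#,X}(\XBY)\subseteq\AutB(\XBY)$. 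Indeed an element $(\fX,p_Y)\in\A^k_{\#,Y}(\XBY)$ maps under the homomorphism of Theorem~A to $\fX\circ\iota_X\in\A^k_\#(X)=\AutB(X)$; since $(\fX,p_Y)$ covers $\Id_Y$ and restricts on the fibre $F_X$ of $p_Y$ to $\fX\circ\iota_X|_{F_X}$, a $\pi_*$-isomorphism, the five lemma for $F_X\to\XBY\to Y$ forces $(\fX,p_Y)\in\AutB(\XBY)$.

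Now assume every map in $\A^k_\#(\XBY)$ is $k$-reducible and set $m=\max\{\N(X),\N(Y)\}$; I show $\A^m_\#(\XBY)=\AutB(\XBY)$, which with the inequality gives equality. Take $f=(\fX,\fY)\in\A^m_\#(\XBY)$. By $m$-reducibility $(p_X,\fY)\in\A^m_{\#,X}(\XBY)$, and since $m\ge\N(Y)$ the auxiliary fact gives $(p_X,\fY)\in\AutB(\XBY)$. Its inverse preserves $p_X$, and a direct check shows $g:=f\circ(p_X,\fY)^{-1}$ preserves $p_Y$; as it is a product in the monoid $\A^m_\#(\XBY)$ we get $g\in\A^m_{\#,Y}(\XBY)$, so $g\in\AutB(\XBY)$ because $m\ge\N(X)$. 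Hence $f=g\circ(p_X,\fY)\in\AutB(\XBY)$. This is the step I expect to be the main obstacle: reducibility alone does not exhibit $f$ as a product of $(p_X,\fY)$ and $(\fX,p_Y)$, and the point is to peel off the already-invertible factor $(p_X,\fY)$ and observe that the remainder lands in $\A^m_{\#,Y}(\XBY)\subseteq\AutB(\XBY)$; this sidesteps the extra condition $\fY\simeq^B_B\fY\circ\iota_Y\circ p_Y$ required for the full product decomposition of Theorem~A.

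For (ii), write $I_{(X)}=\{k\ge 0:\A^{k+1}_\#(X)\subsetneq\A^k_\#(X)\}$, so that $\L(X)=\#I_{(X)}$ and $\L(X)+\L(Y)-\#(I_{(X)}\cap I_{(Y)})=\#(I_{(X)}\cup I_{(Y)})$. If $k\in I_{(X)}$, pick $g\in\A^k_\#(X)\setminus\A^{k+1}_\#(X)$; then $\sigma(g)\in\A^k_\#(\XBY)$ while $\sigma(g)\notin\A^{k+1}_\#(\XBY)$ by the criterion of the first paragraph, so $k\in I_{(\XBY)}$. Hence $I_{(X)}\cup I_{(Y)}\subseteq I_{(\XBY)}$ and $\L(\XBY)\ge\#(I_{(X)}\cup I_{(Y)})$, which is the asserted inequality. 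For equality I invoke Theorem~B (with $X,Y$ interchanged) to obtain $0\to\A^{k,X}_{\#,Y}(\XBY)\to\A^k_\#(\XBY)\xrightarrow{\Pi}\A^k_\#(X)\times\A^k_\#(Y)\to0$ with $\Pi(f)=(\fX\circ\iota_X,\fY\circ\iota_Y)$. The hypothesis $[X,Y]^B_B=\{\ast\}$ makes $\fY\circ\iota_X$ fibrewise nullhomotopic for every $f$, so the fibre map of $f$ over $B$ is block upper triangular with diagonal blocks the fibre maps of $\fX\circ\iota_X$ and $\fY\circ\iota_Y$; consequently $f\in\A^k_\#(\XBY)$ if and only if $\fX\circ\iota_X\in\A^k_\#(X)$ and $\fY\circ\iota_Y\in\A^k_\#(Y)$, i.e. $\A^k_\#(\XBY)=\Pi^{-1}\big(\A^k_\#(X)\times\A^k_\#(Y)\big)$ for the fixed map $\Pi$. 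Therefore $k\notin I_{(X)}\cup I_{(Y)}$ forces $\A^{k+1}_\#(\XBY)=\A^k_\#(\XBY)$, so $I_{(\XBY)}=I_{(X)}\cup I_{(Y)}$ and equality holds. The delicate point here is that one cannot quote a five lemma for monoids verbatim, which is precisely why I use the block-triangular description $\A^k_\#(\XBY)=\Pi^{-1}\big(\A^k_\#(X)\times\A^k_\#(Y)\big)$ rather than a diagram chase.
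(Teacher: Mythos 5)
Your proof is correct, and for part (i) and the inequality in part (ii) it follows essentially the paper's route: the restriction-to-fibres criterion (Proposition \ref{iprop}) plays the role of your five-lemma computations, your $\sigma(g)=(g\circ p_X,p_Y)$ is the paper's $f\times_B g$ with one factor the identity, and your factorisation $f=\bigl(f\circ(p_X,\fY)^{-1}\bigr)\circ(p_X,\fY)$ is a mild variant of the paper's $f=\bigl(p_X,\fY\circ(\fX,p_Y)^{-1}\bigr)\circ(\fX,p_Y)$ in Theorem \ref{escn} (yours invokes reducibility only once, since the left factor visibly satisfies $p_Y\circ f\circ(p_X,\fY)^{-1}=p_Y$, whereas the paper applies reducibility a second time to the composite). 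The genuine divergence is the equality in part (ii). The paper proves it by factoring $h=(p_X,h_Y)\circ(h_X,p_Y)$ inside $\A^{k+1}_{\#}(\XBY)$, and that factorisation is exactly where the fibrewise group-like hypothesis and $[X\wedge_B Y,Y]^B_B=\{\ast\}$ are consumed (via Theorem \ref{ftsg}, Lemma \ref{hcm} and Corollary \ref{cftsg}). You instead establish the set-level identity $\A^k_{\#}(\XBY)=\{f:\ \fX\circ\iota_X\in\A^k_{\#}(X),\ \fY\circ\iota_Y\in\A^k_{\#}(Y)\}$ directly from the triangular form of the matrix $M_i$ of Equation \ref{eqmat}: the forward inclusion is precisely the paper's Proposition \ref{rsem}(a), and the reverse inclusion is the elementary fact that a triangular matrix with invertible diagonal is invertible. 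This bypasses the product decomposition entirely, so your appeal to Theorem B is decorative and the group-like and smash hypotheses play no role in your argument --- only $[X,Y]^B_B=\{\ast\}$ is used. That is a real simplification, and in fact a mild strengthening of the stated theorem; it rests on nothing beyond the matrix calculus of \cite{prshe} that the paper itself already relies on in Proposition \ref{rsem}, so the two arguments stand on the same foundations there.
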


\noindent (See Theorems \ref{scn}, \ref{escn}, \ref{eslps} for details.)

\begin{mysubsection}{Organisation}
The rest of the paper is organised as follows. In Section \ref{stwoproduct}, we consider the monoid $\A_{\#}^k$ for the fibred product of two spaces. We obtain several results on $k$-reducibility  to prove Theorem A and Theorem B. In Section \ref{ssclose}, we introduce self-closeness number of $X\in \CWBB$ and prove the first part of Theorem C. In Section \ref{sslength}, we introduce self-length, obtain its general properties and prove the second part of Theorem C. In the last Section \ref{shfb}, we generalise the results of the previous sections to higher fibred products. 

\begin{ack}
The first author would like to thank IIT Kanpur for Ph.D fellowship.
\end{ack}

\end{mysubsection}

\sect{$\A^k_{\#}(\XBY)$}\label{stwoproduct}

In this section we study the monoid $\A^k_{\#}$ (cf. Definition \ref{dmon}) for the fibred product $\XBY$ in terms of the individual spaces $X$ and $Y$. First we recall some basic definitions (see \cite{cjfht}). Let $(X,p,s)$ and $(Y, q, t)$ be fibrewise pointed spaces over $B$. 
 A fibrewise pointed map $f\colon X\to Y$ is called \emph{fibrewise constant} if $f = t\circ p$.  A fibrewise pointed map fibrewise homotopic to the fibrewise constant is called a \emph{fibrewise pointed null-homotopic}.
We now obtain some general results regarding the monoid $\A^k_{\#}(X)$. Let $(X, p, s) \in \CWBB$ and $X_b$ be the fibre of a point $b\in B$ under $p\colon X\to B$.  For $f\in \mapbb(X, X)$ let us denote the restriction to the fibre by 
\begin{myeq}\label{eqfiber}
X_b := p^{-1}(b),~~~~f_b:=f|_{X_b}:~X_b \to X_b.
\end{myeq}
Note that for  $f=(\fX,\fY)\in \mapbb(\XBY,\XBY),$
 $$(\XBY)_b = \Xb\times \Yb ~;~~~~\fb = (\fXb, \fYb): \Xb \times \Yb \to \Xb \times \Yb.$$ 

\begin{proposition}\label{iprop}
With notations as above, let $f\in \mapbb(X, X)$. Then $f\in \A^k_{\#}(X)$ if and only if $f_b\in \AAA^k_{\#}(X_b)$ for some $b\in B$. In particular, $f\in \AutB(X)$ if and only if $f_b \in \Aut(\Xb)$.
\end{proposition}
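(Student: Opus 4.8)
The plan is to exploit the long exact homotopy sequence of the fibration $p\colon X\to B$ together with the section $s$, and to compare it for $X$ with its restriction to the fibre. I fix the basepoint, so that $\pi_i(X)$ is computed at $s(\ast)$ and the relevant fibre is $X_b$ with $b=\ast$; since $B$ is connected all fibres are homotopy equivalent and the restrictions $f_b$ over different points are compatible under fibre transport, so the condition on $f_b$ will be independent of the chosen $b$, which is what justifies the ``for some $b$'' phrasing. Because $p$ admits the section $s$, the map $p_\ast\colon\pi_i(X)\to\pi_i(B)$ is split surjective, hence all connecting homomorphisms vanish and the long exact sequence breaks into split short exact sequences
$$0\to \pi_i(X_b)\xrightarrow{j_\ast}\pi_i(X)\xrightarrow{p_\ast}\pi_i(B)\to 0,\qquad i\ge 1,$$
with splitting $s_\ast$.

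First I would record that $f$ induces a morphism of these sequences. Since $f$ is fibrewise ($p\circ f=p$) it restricts to $f_b\colon X_b\to X_b$ and covers $\Id_B$, so naturality of the homotopy sequence gives a commuting ladder whose left vertical arrow is $(f_b)_\ast$, middle arrow is $f_\ast$, and right arrow is $\Id_{\pi_i(B)}$. Moreover $f\circ s=s$ forces $f_\ast\circ s_\ast=s_\ast$, so $f_\ast$ fixes the image of the splitting. Consequently $f_\ast$ preserves the subgroup $\Ker p_\ast=\operatorname{Im} j_\ast\cong\pi_i(X_b)$, acts there as $(f_b)_\ast$, and induces the identity on the quotient $\pi_i(B)$.

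Next I would prove the key equivalence: for each $i\ge 1$, $f_\ast$ is an isomorphism if and only if $(f_b)_\ast$ is. For $i\ge 2$ the groups are abelian and this is the five lemma applied to the ladder above. For $i=1$ the groups may be nonabelian, so instead I would argue directly from the split extension: an endomorphism of a group that preserves a normal subgroup, restricts there to $(f_b)_\ast$, and induces the identity on the quotient is bijective exactly when its restriction is---a short diagram chase that uses the section to lift quotient elements into preimages. The case $i=0$ is automatic, since $X$ and $X_b$ are connected and so $\pi_0(f)$ and $\pi_0(f_b)$ are trivially bijective.

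Assembling these equivalences over $0\le i\le k$ gives $f\in\A^k_\#(X)$ if and only if $f_b\in\AAA^k_\#(X_b)$. For the final assertion I would pass to the limit $k\to\infty$: using $\lim_{k\to\infty}\A^k_\#(X)=\AutB(X)$ (Whitehead's theorem together with the fact, noted in the introduction, that a fibrewise homotopy equivalence with fibration projections is a fibre homotopy equivalence) and the analogous $\bigcap_k\AAA^k_\#(X_b)=\Aut(X_b)$ for the connected CW fibre, the equivalence at every level $k$ yields $f\in\AutB(X)$ if and only if $f_b\in\Aut(X_b)$. The main obstacle I anticipate is the nonabelian $i=1$ case, where the five lemma is unavailable and one must invoke the splitting carefully; the vanishing of the connecting maps, which produces the short exact sequences in the first place, is the other point where the existence of the section $s$ is essential.
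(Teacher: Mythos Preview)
Your proof is correct and shares the paper's overall framework---the long exact homotopy sequence of the fibration, the ladder induced by $f$ over $\Id_B$, and five-lemma reasoning---but you diverge from the paper in one useful way. You explicitly exploit the section $s$: since $p_\ast$ is split by $s_\ast$, the connecting maps vanish and the long exact sequence breaks into split short exact sequences, and since $f\circ s=s$ you have $f_\ast$ fixing the image of the splitting. This makes the two implications symmetric and reduces the harder direction to a clean statement about endomorphisms of split extensions. The paper never invokes the splitting in its proof: it applies the five lemma for $f_b\in\AAA^k_\#(X_b)\Rightarrow f\in\A^k_\#(X)$, and for the converse carries out an explicit diagram chase through the adjacent terms $\pi_{k+1}(B)\to\pi_k(X_b)\to\pi_k(X)\to\pi_k(B)$, using that $\pi_k(f_b)$ restricts to the identity on the image of the connecting map. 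Your route is shorter and is more careful about the nonabelian $\pi_1$ case (the paper writes everything additively without comment); the paper's chase, on the other hand, makes no essential use of the section and so would go through for fibrewise maps that are not pointed.
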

	
\begin{proof}
Consider the commutative diagram,  

\mydiagram[\label{dig1}]{
X_b \ar[dd]_{f_b}  \ar@{^{(}->}[rr]^{\iota}  && X \ar[dd]^{f} \ar[rr]^{p} && B \ar[dd]^{\Id_B} \\\\
X_b \ar@{^{(}->}[rr]_{\iota} && X \ar[rr]_{p} && B
}

Using the long exact sequence of homotopy groups, we have

\xymatrixrowsep{0.2in}
\xymatrixcolsep{0.1in}
\mydiagram[\label{dig}]{
	 \cdots \ar[rr]^{} &&\pi_{k+1}(B) \ar[dd]_{\Id_{\pi_{k+1}(B)}}  \ar[rr]^{} && \pi_k(X_b) \ar[dd]_{\pi_k(f_b)}  \ar[rr]^{\pi_k(\iota)}  && \pi_k(X) \ar[dd]^{\pi_k(f)} \ar[rr]^{\pi_k(p)} && \pi_k(B) \ar[dd]^{\Id_{\pi_k(B)}} \ar[rr]^{} && \pi_{k-1}(X_b) \ar[dd]^{\pi_{k-1}(f_b)} \ar[rr]^{\pi_{k-1}(\iota)} && \cdots \\\\
	 \cdots \ar[rr]^{} &&\pi_{k+1}(B) \ar[rr]_{}  && \pi_k(X_b) \ar[rr]_{\pi_k(\iota)} && \pi_k(X) \ar[rr]_{\pi_k(p)} && \pi_k(B) \ar[rr]_{} && \pi_{k-1}(X_b) \ar[rr]_{\pi_{k-1}(\iota)} && \cdots
}

\noindent If $f_b\in \A^k_{\#}(X_b)$, then using Five-Lemma we get $f\in \A^k_{\#}(X)$.

Conversely, let $f\in \A^k_{\#}(X)$. It is sufficient to prove that $\pi_k(f_b)$ is a bijective map. Let $a\in \pi_k(X_b)$ such that $\pi_k(f_b)(a) = 0$, then using commutativity $\pi_k(\iota)(a) = 0$, since $\pi_k(f)$ is an isomorphism. Thus $a\in \Ker(\pi_k(\iota))$, therefore there exists  $b'\in \pi_{k+1}(B)$ such that $b'\mapsto a$. Using commutativity we have $b'\mapsto 0$, consequently $a = 0$, thus $\pi_k(f_b)$ is injective. For surjectivity, let $\alpha\in \pi_k(X_b)$, then say $\alpha \mapsto \beta$ in $\pi_k(X)$ and $\beta \mapsto \gamma$ in $\pi_k(B)$. Clearly $\gamma = 0$. Since $\pi_k(f)$ is onto so there exists a $\hat{\beta}$ in $\pi_k(X)$ such that $\pi_k(f)(\hat{\beta}) = \beta$. Using commutativity we have $\pi_k(p)(\hat{\beta}) = 0$, i.e. $\hat{\beta}\in \Ker(\pi_k(p))$. So there exists $\hat{\alpha}\in \pi_k(X_b)$ such that $\hat{\alpha}\mapsto \hat{\beta}$ in $\pi_k(X)$. Let $\pi_k(f_b)(\hat{\alpha}) = \bar{a}$, by commutativity $\pi_k(\iota)(\bar{a}) = \beta$. Then $\pi_k(\iota)(\alpha - \bar{a}) = 0$, i.e. $\alpha - \bar{a}\in \Ker(\pi_k(\iota))$. Thus there exits $\bar{b}\in \pi_{k+1}(B)$ such that $\bar{b}\mapsto \alpha - \bar{a}$ in $\pi_k(\Xb)$. By commutativity $\pi_k(f_b)(\alpha - \bar{a}) = \alpha - \bar{a}$. Consequently, $\pi_k(f_b)(\hat{\alpha} + \alpha - \bar{a}) = \alpha$, i.e. $\pi_k(f_b)$ is surjective. 

\end{proof}
	
 Similar to \cite[Proposition 2.3]{psfhe}, we have the following. 

\begin{corollary}\label{kred}
A map $f\in \A^k_{\#}(\XBY)$ is $k$-reducible if and only if its restriction to a fibre $f_b\in \AAA^k_{\#}(X_b\times Y_b)$ is $k$-reducible, for some $b\in B$. 
\end{corollary}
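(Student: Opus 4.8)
The plan is to reduce the statement about $k$-reducibility of $f$ to the already-established fibrewise criterion of Proposition \ref{iprop}, applied to the two auxiliary self-maps $(p_X,\fY)$ and $(\fX,p_Y)$ rather than to $f$ itself. Recall that by definition $f=(\fX,\fY)\in\A^k_{\#}(\XBY)$ is $k$-reducible precisely when both $(p_X,\fY)$ and $(\fX,p_Y)$ lie in $\A^k_{\#}(\XBY)$, and the corresponding pointed notion says that $f_b=(\fXb,\fYb)\in\AAA^k_{\#}(\Xb\times\Yb)$ is $k$-reducible when both $(\mathrm{pr}_{\Xb},\fYb)$ and $(\fXb,\mathrm{pr}_{\Yb})$ lie in $\AAA^k_{\#}(\Xb\times\Yb)$, where $\mathrm{pr}$ denotes the ordinary cartesian projections.

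The first step is to identify the fibre restrictions of the auxiliary maps. Using the observation recorded just before Proposition \ref{iprop} that $(\XBY)_b=\Xb\times\Yb$ and $\fb=(\fXb,\fYb)$, I would check that the projection $p_X\colon\XBY\to X$ restricts on the fibre to the cartesian projection $\mathrm{pr}_{\Xb}\colon\Xb\times\Yb\to\Xb$, and similarly $p_Y$ restricts to $\mathrm{pr}_{\Yb}$. This is immediate from the explicit formulas for $\pBq$ and the projections, since on a single fibre the pullback degenerates to the ordinary product. Consequently the restriction of $(p_X,\fY)$ to the fibre is exactly $(\mathrm{pr}_{\Xb},\fYb)$, and the restriction of $(\fX,p_Y)$ is $(\mathrm{pr}_{\Yb},\fXb)$ — in other words, restriction to the fibre commutes with the two reducibility constructions.

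The second and final step is a direct application of Proposition \ref{iprop} to each auxiliary map. Since $(p_X,\fY)\in\mapbb(\XBY,\XBY)$ with fibre restriction $(\mathrm{pr}_{\Xb},\fYb)$, the proposition gives $(p_X,\fY)\in\A^k_{\#}(\XBY)$ if and only if $(\mathrm{pr}_{\Xb},\fYb)\in\AAA^k_{\#}(\Xb\times\Yb)$; the analogous equivalence holds for $(\fX,p_Y)$ and $(\fXb,\mathrm{pr}_{\Yb})$. Conjoining the two equivalences yields that both auxiliary maps are in $\A^k_{\#}(\XBY)$ if and only if both restricted auxiliary maps are in $\AAA^k_{\#}(\Xb\times\Yb)$, which is exactly the claim. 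I expect no serious obstacle here: the only point requiring a little care is the bookkeeping matching each parametrized projection to its cartesian counterpart on the fibre, and confirming that Proposition \ref{iprop} applies verbatim to the auxiliary self-maps (which are genuine elements of $\mapbb(\XBY,\XBY)$), so that the ``for some $b\in B$'' quantifier transfers cleanly through both equivalences.
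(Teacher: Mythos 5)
Your proposal is correct and follows essentially the same route as the paper: identify the fibre restrictions $(p_X,\fY)_b$ and $(\fX,p_Y)_b$ with the corresponding cartesian auxiliary maps on $\Xb\times\Yb$, then apply Proposition \ref{iprop} to each of them and conjoin the two equivalences. The only blemish is a transposition typo where you write the restriction of $(\fX,p_Y)$ as $(\mathrm{pr}_{\Yb},\fXb)$ rather than $(\fXb,\mathrm{pr}_{\Yb})$, which you silently correct in the final step.
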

	
\begin{proof}
Let $f=(\fX,\fY)\in \A^k_{\#}(\XBY)$ such that the restriction to a fibre $f_b = (f_{X_b},f_{Y_b})\in \AAA^k_{\#}(X_b\times Y_b)$ is $k$-reducible. Then by the definition of $k$-reducibility, we have $(p_{X_b},f_{Y_b}), (f_{X_b},p_{Y_b})\in \AAA^k_{\#}(X_b\times Y_b)$. Note that, $(p_X,f_Y)_b = (p_{X_b},f_{Y_b})$ and $(f_X,p_Y)_b = (f_{X_b},p_{Y_b})$. Consequently $(p_X,\fY),~~(\fX,p_Y)\in \A^k_{\#}(\XBY)$ by Proposition \ref{iprop}. So $f$ is $k$-reducible.
Converse can be proved similarly.
		
\end{proof}

We now obtain an equivalent criterion of $k$-reducibility. 

\begin{proposition}\label{mhom}
Let $f = (\fX,f_Y)\in \map^B_B(\XBY,\XBY)$. Then $(\fX,p_Y)\in \A^k_{\#}(\XBY)$ if and only if $\fX\circ \iota_X\in \A^k_{\#}(X)$ and $(p_X,\fY)\in \A^k_{\#}(\XBY)$ if and only if $\fY\circ \iota_Y\in \A^k_{\#}(Y)$. 

In particular, $(\fX,p_Y)\in \AutB(\XBY)$ if and only if $\fX\circ \iota_X\in \AutB(X)$ and $(p_X,\fY)\in \AutB(\XBY)$ if and only if $\fY\circ \iota_Y\in \AutB(Y)$.
\end{proposition}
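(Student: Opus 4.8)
The plan is to reduce the statement to a single fibre via Proposition \ref{iprop} and thereby to an elementary computation about ordinary products. Applying Proposition \ref{iprop} to the self-map $(\fX,\pY)$ of $\XBY$ and to the self-map $\fX\circ\iota_X$ of $X$, membership in the monoids $\A^k_\#$ is detected on a fibre: $(\fX,\pY)\in\A^k_\#(\XBY)$ iff $(\fX,\pY)_b\in\AAA^k_\#(\Xb\times\Yb)$, and $\fX\circ\iota_X\in\A^k_\#(X)$ iff $(\fX\circ\iota_X)_b\in\AAA^k_\#(\Xb)$, for some $b\in B$. So first I would identify these two restrictions explicitly. Writing $g:=\fXb\colon\Xb\times\Yb\to\Xb$ and letting $c_0:=t(b)$ be the basepoint of $\Yb$, a direct check from $(\fX,\pY)(x,y)=(\fX(x,y),y)$ and $\iota_X(x)=(x,tp(x))$ (so that $\iota_X$ restricts on $\Xb$ to $x\mapsto(x,c_0)$) gives $(\fX,\pY)_b=(g,\mathrm{pr}_{\Yb})$ and $(\fX\circ\iota_X)_b=g\circ j$, where $\mathrm{pr}_{\Yb}$ is the projection and $j\colon\Xb\to\Xb\times\Yb$, $x\mapsto(x,c_0)$. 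Thus the first equivalence becomes the non-parametrized assertion: for $g\colon\Xb\times\Yb\to\Xb$, one has $(g,\mathrm{pr}_{\Yb})\in\AAA^k_\#(\Xb\times\Yb)$ if and only if $g\circ j\in\AAA^k_\#(\Xb)$.

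The key step is to analyse the induced map on homotopy groups through the product decomposition. Since $\Xb,\Yb$ are connected, only degrees $1\le i\le k$ matter, and for these $\pi_i(\Xb\times\Yb)\cong\pi_i(\Xb)\times\pi_i(\Yb)$. With respect to this splitting the homomorphism $\pi_i(g,\mathrm{pr}_{\Yb})$ is block upper triangular, $(\alpha,\gamma)\mapsto\bigl(\pi_i(g\circ j)(\alpha)\cdot\pi_i(g\circ j')(\gamma),\gamma\bigr)$, where $j'\colon\Yb\to\Xb\times\Yb$ is the other inclusion; its diagonal blocks are $\pi_i(g\circ j)$ and the identity of $\pi_i(\Yb)$. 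Such a triangular homomorphism is an isomorphism precisely when $\pi_i(g\circ j)$ is one, for each $i$ in the range. Running this over $1\le i\le k$ yields exactly $(g,\mathrm{pr}_{\Yb})\in\AAA^k_\#(\Xb\times\Yb)\iff g\circ j\in\AAA^k_\#(\Xb)$, which combined with the fibrewise reductions above proves the first equivalence. The second equivalence, concerning $(\pX,\fY)$ and $\fY\circ\iota_Y$, follows verbatim by interchanging the roles of $X$ and $Y$.

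The point requiring genuine care, rather than a deep obstacle, is the bookkeeping in degree $i=1$, where $\pi_1$ need not be abelian: here I would use that the images of $\pi_1(\Xb)$ and $\pi_1(\Yb)$ commute inside $\pi_1(\Xb\times\Yb)$ to justify the factorisation $\pi_1(g)(\alpha,\gamma)=\pi_1(g\circ j)(\alpha)\cdot\pi_1(g\circ j')(\gamma)$, and then verify injectivity and surjectivity of the triangular map directly (since the $\gamma$-coordinate is carried identically, one recovers $\alpha$ from $\pi_1(g\circ j)$ alone). Finally, the ``in particular'' statement for $\AutB$ is immediate from the first part together with the identity $\AutB(X)=\bigcap_{k\ge0}\A^k_\#(X)$ noted in the introduction: indeed $\fX\circ\iota_X\in\AutB(X)$ iff it lies in $\A^k_\#(X)$ for all $k$, iff $(\fX,\pY)\in\A^k_\#(\XBY)$ for all $k$, iff $(\fX,\pY)\in\AutB(\XBY)$, and symmetrically for $(\pX,\fY)$.
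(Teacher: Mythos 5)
Your proposal is correct and follows essentially the same route as the paper: both reduce membership in $\A^k_{\#}$ to the fibre via Proposition \ref{iprop} and then settle the resulting non-parametrized statement about $(\fXb, \mathrm{pr}_{\Yb})$ on $\Xb\times\Yb$. The only difference is that the paper disposes of that fibre-level step by citing Lemma 2 of \cite{jlfcs}, whereas you prove it inline via the triangular form of $\pi_i(g,\mathrm{pr}_{\Yb})$ on $\pi_i(\Xb)\times\pi_i(\Yb)$ (which is in the spirit of the matrix criterion of Equation \ref{eqmat} used elsewhere in the paper), so your write-up is self-contained but not a different argument.
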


\begin{proof}
From Proposition \ref{iprop} we say that $(\fX,p_Y)\in \A^k_{\#}(\XBY)$ if and only if $(\fX,p_Y)_b\in \AAA^k_{\#}(X_b\times Y_b)$. Observe that $(\fX,p_Y)_b = (\fX,p_Y)|_{\Xb\times \Yb} = (f_{X_b},p_{Y_b})$. Moreover using \cite[Lemma 2]{jlfcs} we have $(f_{X_b},p_{Y_b})\in \AAA^k_{\#}(X_b\times Y_b)$ if and only if $f_{X_b}\circ \iota_{X_b}\in \AAA^k_{\#}(X_b)$, where $\iota_{X_b}\colon X_b\to X_b\times Y_b$ be a map defined as $x\mapsto (x,t(b)),~~~\forall x\in X_b$. Observe that $(\fX\circ \iota_X)_b = (\fX\circ \iota_X)|_{\Xb} = f_{X_b}\circ \iota_{\Xb}$,
$$X\xrightarrow[]{\iota_X} \XBY\xrightarrow[]{\fX} X.$$ Therefore $(f_X,p_Y)\in \A^K_{\#}(\XBY)$ if and only if $(f_X\circ \iota_X)_b\in \AAA^k_{\#}(X_b)$. By using the Proposition \ref{iprop} we get the desired result.
The proof of the other part is similar.

\end{proof}

\begin{corollary}\label{ckredu}
A map $f = (f_X,f_Y)\in \A^k_{\#}(\XBY)$ is $k$-reducible if $f_X\circ \iota_X\in \A^k_{\#}(X)$ and  $f_Y\circ \iota_Y\in \A^k_{\#}(Y)$.

In particular, a map $f = (f_X,f_Y)\in \AutB(\XBY)$ is reducible if $f_X\circ \iota_X\in \AutB(X)$ and  $f_Y\circ \iota_Y\in \AutB(Y)$.
\end{corollary}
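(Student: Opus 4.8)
The plan is to deduce Corollary \ref{ckredu} directly from Proposition \ref{mhom} together with the definition of $k$-reducibility, so that essentially no new work is required beyond combining the two. Recall that $f=(\fX,\fY)\in \A^k_{\#}(\XBY)$ is declared $k$-reducible precisely when both $(p_X,\fY)$ and $(\fX,p_Y)$ lie in $\A^k_{\#}(\XBY)$. Proposition \ref{mhom} gives an equivalent criterion for each of these two membership conditions in terms of the composites $\fX\circ\iota_X$ and $\fY\circ\iota_Y$: namely $(\fX,p_Y)\in \A^k_{\#}(\XBY)$ iff $\fX\circ\iota_X\in\A^k_{\#}(X)$, and $(p_X,\fY)\in\A^k_{\#}(\XBY)$ iff $\fY\circ\iota_Y\in\A^k_{\#}(Y)$.

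The argument I would write is then a two-line implication. Assume the hypotheses $\fX\circ\iota_X\in\A^k_{\#}(X)$ and $\fY\circ\iota_Y\in\A^k_{\#}(Y)$. By the first equivalence in Proposition \ref{mhom}, the hypothesis $\fX\circ\iota_X\in\A^k_{\#}(X)$ yields $(\fX,p_Y)\in\A^k_{\#}(\XBY)$; by the second equivalence, $\fY\circ\iota_Y\in\A^k_{\#}(Y)$ yields $(p_X,\fY)\in\A^k_{\#}(\XBY)$. Since both $(\fX,p_Y)$ and $(p_X,\fY)$ now lie in $\A^k_{\#}(\XBY)$, the map $f$ satisfies the definition of $k$-reducibility. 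Note the corollary asserts only a sufficient condition (an ``if''), which is exactly what the forward direction of each equivalence supplies, so I would not need to worry about any converse here.

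For the ``in particular'' clause about $\AutB(\XBY)$, I would invoke the corresponding ``in particular'' statements of Proposition \ref{mhom}: the hypotheses $\fX\circ\iota_X\in\AutB(X)$ and $\fY\circ\iota_Y\in\AutB(Y)$ give $(\fX,p_Y),(p_X,\fY)\in\AutB(\XBY)$, whence $f$ is reducible in the sense recalled from \cite{psfhe}. Since reducibility of an element of $\AutB(\XBY)$ amounts to $k$-reducibility for every $k$ (as remarked in the excerpt), this second clause is the natural limiting case of the first and requires no separate computation.

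I do not anticipate a genuine obstacle, as this is a bookkeeping corollary whose content is entirely carried by Proposition \ref{mhom}. The only point demanding a little care is to keep the roles of $X$ and $Y$ matched correctly with the two composites $\fX\circ\iota_X$ and $\fY\circ\iota_Y$, ensuring that each hypothesis is fed into the appropriate equivalence; beyond that the proof is immediate.
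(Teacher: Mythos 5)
Your proposal is correct and matches the paper's treatment: the paper states this corollary without proof precisely because it is the immediate combination of the two equivalences in Proposition \ref{mhom} with the definition of $k$-reducibility, which is exactly the two-line argument you give. The ``in particular'' clause is likewise handled as you describe, via the corresponding clause of Proposition \ref{mhom}.
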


The following result gives some sufficient conditions for reducibility.
%
\begin{proposition}\label{rsem}
Let $X,Y\in \CWBB$ and $k$ be a non-negative integer.
\begin{enumerate}[(a)]
\item If $[X,Y]^B_B = \{\ast\}$ or $[Y,X]^B_B = \{\ast\}$, then each map $f \in  \A^k_{\#}(\XBY)$ is $k$-reducible. 
\item Assume that, for some $b\in B$, any one of the sets $\Hom(\pi_i(\Xb),\pi_i(\Yb))$ and $\Hom(\pi_i(\Yb),\pi_i(\Xb))$ is trivial for $1\leq i\leq k$. Then each map $f\in \A^k_{\#}(\XBY)$ is $k$-reducible.
\end{enumerate}
\end{proposition}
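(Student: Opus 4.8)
The plan is to reduce both parts to a single assertion about the ordinary product $\Xb\times\Yb$ on a fibre and to read off triangularity of the induced map on homotopy groups. By Corollary \ref{kred}, a map $f=(\fX,\fY)\in\A^k_\#(\XBY)$ is $k$-reducible precisely when its fibre restriction $\fb=(\fXb,\fYb)\in\AAA^k_\#(\Xb\times\Yb)$ is $k$-reducible for some $b\in B$, so it suffices to argue on the fibre. Since the fibres are connected, for $i\geq 1$ there is a natural splitting $\pi_i(\Xb\times\Yb)\cong\pi_i(\Xb)\oplus\pi_i(\Yb)$, and with respect to it $\pi_i(\fb)$ is described by the matrix whose entries are $\pi_i$ of the four composites $\mathrm{pr}\circ\fb\circ\iota$. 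In particular its two off-diagonal entries are $\pi_i(\fYb\circ\iota_{\Xb})\in\Hom(\pi_i\Xb,\pi_i\Yb)$ and $\pi_i(\fXb\circ\iota_{\Yb})\in\Hom(\pi_i\Yb,\pi_i\Xb)$; the whole argument hinges on making one of them vanish.

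For part (a), I would first suppose $[Y,X]^B_B=\{\ast\}$. Then the composite $\fX\circ\iota_Y\colon Y\to\XBY\to X$ is fibrewise pointed null-homotopic, and restricting a fibrewise null-homotopy to the fibre shows $\fXb\circ\iota_{\Yb}\colon\Yb\to\Xb$ is null-homotopic; hence $\pi_i(\fXb\circ\iota_{\Yb})=0$ for every $i$, so $\pi_i(\fb)$ is triangular. The case $[X,Y]^B_B=\{\ast\}$ is handled symmetrically using $\fY\circ\iota_X$, and yields the opposite triangular shape. For part (b) the triangularity is immediate: if $\Hom(\pi_i\Xb,\pi_i\Yb)$ is trivial for $1\leq i\leq k$ then the entry $\pi_i(\fYb\circ\iota_{\Xb})$ is the zero homomorphism throughout that range, and the alternative hypothesis kills the other off-diagonal entry. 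Either way $\pi_i(\fb)$ is triangular for $1\leq i\leq k$, while $\pi_0$ is trivial by connectivity of the fibres.

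It then remains to deduce, from the fact that $\pi_i(\fb)$ is a triangular isomorphism, that both diagonal entries $\pi_i(\fXb\circ\iota_{\Xb})$ and $\pi_i(\fYb\circ\iota_{\Yb})$ are themselves isomorphisms for $0\leq i\leq k$. Viewing the splitting as a short exact sequence $0\to\pi_i\Xb\to\pi_i(\Xb\times\Yb)\to\pi_i\Yb\to 0$ preserved by $\pi_i(\fb)$, a diagram chase — like the one in the converse half of Proposition \ref{iprop} — shows directly that one diagonal map is injective and the other surjective, with its cokernel isomorphic to the other's kernel; the finite generation (hence Hopfian behaviour) of the homotopy groups would then upgrade both to isomorphisms. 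Once this is in place, $\fXb\circ\iota_{\Xb}\in\AAA^k_\#(\Xb)$ and $\fYb\circ\iota_{\Yb}\in\AAA^k_\#(\Yb)$, so by \cite[Lemma 2]{jlfcs} both $(\fXb,p_{\Yb})$ and $(p_{\Xb},\fYb)$ lie in $\AAA^k_\#(\Xb\times\Yb)$ — here the reasoning is clean because the complementary diagonal block is the identity — and Corollary \ref{kred} lifts this back to $k$-reducibility of $f$. The hard part will be precisely this last upgrade from one-sided injectivity/surjectivity to genuine bijectivity on each factor: for a split extension a triangular automorphism need not have invertible diagonal blocks in general, so it is here that the finiteness assumptions on the fibres must be invoked.
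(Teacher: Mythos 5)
Your proposal follows the paper's own proof essentially step for step: reduce to a fibre via Proposition \ref{iprop} and Corollary \ref{kred}, represent $\pi_i(\fb)$ by the matrix $M_i(\fXb,\fYb)$ of \cite{prshe}, kill one off-diagonal entry (in case (a) by restricting a fibrewise null-homotopy of $\fX\circ\iota_Y$ or $\fY\circ\iota_X$ to the fibre, in case (b) by triviality of the $\Hom$-set), and then pass back up through \cite[Lemma 2]{jlfcs} and Proposition \ref{iprop}. So the route is the same; the only divergence is at the step you single out as the hard part, namely deducing that the diagonal entries of a triangular invertible matrix are themselves isomorphisms. The paper asserts this outright.

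Two remarks on that step. For part (b) your detour through the snake lemma and Hopfian groups is unnecessary: if $\Hom(\pi_i(\Xb),\pi_i(\Yb))=\{\ast\}$ then the corresponding entry of the inverse homomorphism $\pi_i(\fb)^{-1}$ (which is just some homomorphism $\pi_i(\Xb)\to\pi_i(\Yb)$) also vanishes, so the inverse is triangular of the same shape, and multiplying the two triangular matrices in both orders exhibits two-sided inverses for both diagonal entries; this works for $i=1$ as well and needs no finiteness. For part (a), however, your worry is genuine: the off-diagonal entry of $\pi_i(\fb)$ vanishes only because one specific composite is null-homotopic, and the inverse of $\pi_i(\fb)$ is not induced by a map of spaces, so nothing forces it to be triangular. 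The snake lemma then gives only that one diagonal entry is injective, the other surjective, with $\operatorname{coker}\cong\ker$, and for infinitely generated groups a triangular automorphism of $A\oplus B$ really can have non-invertible diagonal. Your proposed repair via finite generation is not licensed by the statement --- $\CWBB$ imposes no finiteness on fibres, and even finitely generated $\pi_1$ need not be Hopfian --- so as written your proof of (a) quietly adds a hypothesis. Since the paper's proof asserts the identical implication with no justification at all, you have not introduced a new error so much as located the one step of the published argument that is not formal; it would be worth either adding a finiteness hypothesis or finding an argument special to the situation $[X,Y]^B_B=\{\ast\}$.
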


\begin{proof}
\begin{enumerate}[(a)]
\item Let $f = (\fX,\fY)\in \A^k_{\#}(\XBY)$. Then $(\fXb,\fYb) = (\fX,\fY)_b\in \AAA^k_{\#}(\Xb\times \Yb)$ from Proposition \ref{iprop}. So the induced homomorphism $\pi_i(\fXb,\fYb)$ is isomorphism for all $0\leq i\leq k$. Moreover using \cite[Section 2]{prshe}, we can say that  $\pi_i(\fXb,\fYb)$ is isomorphism  if and only if the following matrix is invertible:

\begin{myeq}\label{eqmat}
M_{i}(\fXb,\fYb):=
\begin{bmatrix}
	\pi_i(\fXb\circ \iota_{\Xb})& 	\pi_i(\fXb\circ \iota_{\Yb}) \\
 	\pi_i(\fYb\circ \iota_{\Xb})& \pi_i(\fYb\circ \iota_{\Yb})

\end{bmatrix}
\end{myeq}

\noindent If $[X,Y]^B_B = \{\ast\}$, then $\fY\circ \iota_X \simeq^B_B t\circ p$. Thus $(\fYb\circ \iota_{\Xb}) = (\fY\circ \iota_X)_b$ is homotopic to a constant map, i.e. $\pi_i(\fYb\circ \iota_{\Xb}) = 0$. Hence $\pi_i(\fXb\circ \iota_{\Xb}) ~~ \text{and}~~ \pi_i(\fYb\circ \iota_{\Yb})$ are isomorphisms. Therefore $\pi_i(\fXb\circ \iota_{\Xb}) ~~ \text{and}~~ \pi_i(\fYb\circ \iota_{\Yb})$ are isomorphisms for all $0\leq i\leq k$. So that, 
$$(\fX\circ \iota_X)_b = (\fXb\circ \iota_{\Xb})\in \AAA^k_{\#}(\Xb), ~~ (\fY\circ \iota_Y)_b = (\fYb\circ \iota_{\Yb})\in \AAA^k_{\#}(\Yb).$$ 
Moreover, using Proposition \ref{iprop} we have $\fX\circ \iota_X\in \A^k_{\#}(X),~~ \fY\circ \iota_Y\in \A^k_{\#}(Y)$. We will get similar result if $[Y,X]^B_B =\{\ast\}$.
\item Assume that $\Hom(\pi_i(\Xb),\pi_i(\Yb)) = \{\ast\}$ for some $b\in B$. Then for any map $f\in \A^k_{\#}(\XBY)$, we have $\pi_i(\fYb\circ \iota_{\Xb})\colon \pi_i(\Xb)\to \pi_i(\Yb)$ is trivial. Therefore using the matrix $M_i(\fXb,\fYb)$ of Equation \ref{eqmat}, we have $\pi_i(\fXb\circ \iota_{\Xb}) ~~ \text{and}~~ \pi_i(\fYb\circ \iota_{\Yb})$ are isomorphisms. We will get similar result if we assume $\Hom(\pi_i(\Yb),\pi_i(\Xb)) = \{\ast\}$. Therefore  $\pi_i(\fXb\circ \iota_{\Xb}) ~~ \text{and}~~ \pi_i(\fYb\circ \iota_{\Yb})$ are isomorphisms for all $0\leq i\leq k$. So that, 
$$(\fX\circ \iota_X)_b = (\fXb\circ \iota_{\Xb})\in \AAA^k_{\#}(\Xb), ~~ (\fY\circ \iota_Y)_b = (\fYb\circ \iota_{\Yb})\in \AAA^k_{\#}(\Yb).$$ 
Hence $\fX\circ \iota_X\in \A^k_{\#}(X),~~ \fY\circ \iota_Y\in \A^k_{\#}(Y)$ from Proposition \ref{iprop}.
\end{enumerate}
\end{proof}

\begin{example}\label{exredu}
Consider two ex-spaces $(B\times K(G,m),p,s)$ and $(B\times K(G,n),q,t)$ over $B$, where $p,q$ are first coordinate projection and $m\neq n$. For some $b\in B$, the fibre of $p$ and $q$ are $K(G,m)$ and $K(G,n)$ respectively. To show the reducibility of any map $f\in \A^k_{\#}\big(B\times K(G,m)\times_B B\times K(G,n)\big)$, its sufficient to check that either $\Hom \big(\pi_i\big(K(G,m)\big),\pi_i\big(K(G,n)\big)\big)$ or $\Hom \big(\pi_i\big(K(G,n)\big),\pi_i\big(K(G,m)\big)\big)$ is trivial by Proposition \ref{rsem}(b). Clearly, $\Hom \big(\pi_i\big(K(G,m)\big),\pi_i\big(K(G,n)\big)\big) = \{\ast\}$ for all $i$. Thus, for any $k\geq 0$, all maps in $\A^k_{\#}\big(B\times K(G,m)\times_B B\times K(G,n)\big)$ are $k$-reducible.
\end{example}

We now prove the first main result of this section. Recall from Equation \ref{eq1} that we have two natural sub-monoids $\A^k_{\#,X}(\XBY)$ and $\A^k_{\#,Y}(\XBY)$ of $\A^k_{\#}(\XBY)$. Clearly, the elements of $\A^k_{\#,X}(\XBY)$ are of the form $(p_X,\fY)$, where $\fY\circ \iota_Y\in \A^k_{\#}(Y)$. Similarly the elements of $\A^k_{\#,Y}(\XBY)$ are of the form $(\fX,p_Y)$, where $\fX\circ \iota_X\in \A^k_{\#}(X)$. We show that they fits into short exact sequences and then the product of them is the whole monoid.

\begin{theorem}\label{ftsg}
 Let $X,Y\in \CWBB.$ Then we have split exact sequences of monoids,
$$0\to\A^{k,X}_{\#,Y}(\XBY)\to\A^k_{\#,Y}(\XBY)\xrightarrow[]{\phi_X}\A^k_{\#}(X)\to 0,$$ 
$$0\to\A^{k,Y}_{\#,X}(\XBY)\to\A^k_{\#,X}(\XBY)\xrightarrow[]{\phi_Y}\A^k_{\#}(Y)\to 0.$$ 

\noindent Moreover if each map $f=(\fX,\fY)\in \A^k_{\#}(\XBY)$ is $k$-reducible and $\fY\simeq^B_B \fY\circ \iota_Y\circ p_Y$, then $$\A^k_{\#}(\XBY) = \A^k_{\#,X}(\XBY)\cdot \A^k_{\#,Y}(\XBY).$$
\end{theorem}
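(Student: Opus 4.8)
The plan is to establish the two split exact sequences first and then assemble the factorization from the splitting maps. For the top sequence, I would define $\phi_X \colon \A^k_{\#,Y}(\XBY) \to \A^k_{\#}(X)$ by sending $(\fX, p_Y)$ to $\fX \circ \iota_X$. This is well-defined by the discussion preceding the theorem (elements of $\A^k_{\#,Y}(\XBY)$ have the form $(\fX,p_Y)$ with $\fX\circ\iota_X\in\A^k_\#(X)$), and it is a monoid homomorphism because composing two maps of the form $(\fX,p_Y)$ keeps the second coordinate equal to $p_Y$, and the first coordinate composes compatibly with $\iota_X$. The kernel, i.e. the preimage of the identity class, is exactly $\{(\fX,p_Y) : \fX\circ\iota_X \simeq^B_B \iota_X\}$, which is precisely $\A^{k,X}_{\#,Y}(\XBY)$ as defined. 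Surjectivity and the splitting I would handle simultaneously: given $g\in\A^k_\#(X)$, I would define a section $\sigma_X \colon \A^k_\#(X)\to \A^k_{\#,Y}(\XBY)$ by $\sigma_X(g) = (g\circ p_X, p_Y)$, and check that $\phi_X\circ\sigma_X = \Id$ using $p_X\circ\iota_X = \Id_X$, together with Proposition \ref{mhom} to confirm $\sigma_X(g)$ really lands in $\A^k_{\#,Y}(\XBY)$. The bottom sequence is entirely symmetric, swapping the roles of $X$ and $Y$.

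For the factorization, the goal is to write an arbitrary $f=(\fX,\fY)\in\A^k_\#(\XBY)$ as a composite $u\circ v$ with $u\in\A^k_{\#,X}(\XBY)$ and $v\in\A^k_{\#,Y}(\XBY)$. The natural candidates are $v = (\fX, p_Y)$ and $u = (p_X, \fY)$; by the hypothesis that $f$ is $k$-reducible, both of these lie in $\A^k_\#(\XBY)$, hence in $\A^k_{\#,Y}$ and $\A^k_{\#,X}$ respectively. The remaining task is to verify the composite equals $f$ up to fibrewise pointed homotopy. Writing out $(p_X,\fY)\circ(\fX,p_Y)$, the first coordinate is $p_X\circ(\fX,p_Y) = \fX$ as desired, while the second coordinate is $\fY\circ(\fX,p_Y)$, and this is where the extra hypothesis $\fY\simeq^B_B \fY\circ\iota_Y\circ p_Y$ enters. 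I would use it to replace $\fY$ by $\fY\circ\iota_Y\circ p_Y$; since $p_Y\circ(\fX,p_Y) = p_Y$, the second coordinate of the composite becomes $\fY\circ\iota_Y\circ p_Y\simeq^B_B\fY$, closing the identification.

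The main obstacle is the second-coordinate bookkeeping in the factorization, and more precisely making sure the composition in the correct order actually reproduces $f$. One must be careful which factor is applied first, since $(p_X,\fY)\circ(\fX,p_Y)$ and $(\fX,p_Y)\circ(p_X,\fY)$ need not agree; the hypothesis $\fY\simeq^B_B\fY\circ\iota_Y\circ p_Y$ is tailored to exactly one of these orderings, and the condition says that $\fY$ factors (up to homotopy) through its restriction to the fibre composed with the projection, so that precomposing by a self-map fixing the $Y$-projection does not disturb it. I would also double-check that the map $(\fX,p_Y)$ satisfies $p_Y\circ(\fX,p_Y)=p_Y$ so that it genuinely lies in $\A^k_{\#,Y}$, and dually for $(p_X,\fY)$. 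None of the verifications beyond this point are deep: well-definedness of $\phi_X$, the homomorphism property, and the identification of the kernel are all routine once the maps are pinned down, so the whole proof reduces to choosing the splitting maps correctly and tracking the two coordinates through the single composite.
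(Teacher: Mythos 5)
Your proposal matches the paper's proof essentially step for step: the same $\phi_X(\fX,p_Y)=\fX\circ\iota_X$, the same section $h\mapsto(h\circ p_X,p_Y)$, the same kernel identification, and the same factorization $(p_X,\fY)\circ(\fX,p_Y)$ using the hypothesis $\fY\simeq^B_B\fY\circ\iota_Y\circ p_Y$ in exactly the way the paper does. The only slip is in your description of the kernel, which should read $\fX\circ\iota_X\simeq^B_B\Id_X$ (equivalently $(\fX,p_Y)\circ\iota_X=\iota_X$) rather than $\fX\circ\iota_X\simeq^B_B\iota_X$, whose two sides have different codomains.
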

	
\begin{proof}
From Proposition \ref{mhom}, the map $\phi_X$ is well-defined. Let us observe that
for a self-map $g\colon X\to X$, we have $p_Y\circ \iota_X\circ g = p_Y\circ \iota_X.$ This is because, 
$$p_Y\circ \iota_X\circ g(x) = p_Y(g(x),tpg(x)) = tpg(x) = tp(x) = p_Y\circ \iota_X(x), ~~\forall x\in X.$$ 
Taking $(\fX, \pY) \in \A^k_{\#,Y}(\XBY)$
and $ g = \fX\circ \iota_X$, we have 
$$\iota_X\circ \fX\circ \iota_X = (p_X\circ \iota_X\circ \fX\circ \iota_X,p_Y\circ \iota_X\circ \fX\circ \iota_X) = (\fX\circ \iota_X,p_Y\circ \iota_X) = (\fX,p_Y)\circ \iota_X.$$
Therefore 
\begin{align*}
\phi_X\big((\fX,p_Y)\circ (h_X,p_Y)\big)& = \phi_X\big(\fX\circ(h_X,p_Y),p_Y\big)\\& = \fX\circ (h_X,p_Y)\circ \iota_X\\& = \fX\circ \iota_X\circ h_X\circ \iota_X\\& = \phi_X(\fX,p_Y)\circ \phi_X(h_X,p_Y).
\end{align*}
Hence $\phi_X$ is a homomorphism of monoids. We now identify $\Ker(\phi_X)$ with $\A^{k,X}_{\#,Y}(\XBY)$. Take $(\fX,p_Y)\in \Ker(\phi_X)$ then $\fX\circ \iota_X = \Id_X$. Therefore,
$$(\fX,p_Y)\circ \iota_X = (\fX\circ \iota_X, p_Y\circ \iota_X) = (\Id_X,p_Y\circ \iota_X) = (p_X\circ \iota_X,p_Y\circ \iota_X) = (p_X,p_Y)\circ \iota_X = \iota_X.$$
Thus $\Ker(\phi_X)\subseteq \A^{k,X}_{\#,Y}(\XBY)$. To show the other inclusion, 
take $(\fX,p_Y)\in \A^{k,X}_{\#,Y}(\XBY)$. Then $(\fX,p_Y)\circ \iota_X = \iota_X$. Pre-composing both sides with $p_X$ we obtain  $\fX\circ \iota_X = \Id_X.$ Hence $\Ker(\phi_X) = \A^{k,X}_{\#,Y}(\XBY).$
\noindent It remains to show that there exists a splitting map of $\phi_X$. Define
$$\psi_X\colon \A^k_{\#}(X)\to \A^k_{\#,Y}(X\times_BY), ~~~ h\mapsto (h\circ p_X,p_Y).$$ 
From Proposition \ref{mhom}, the map $\psi_X$ is well-defined. It is easy to verify that $\psi_X$ is a homomorphism such that $\phi_X\circ \psi_X = \Id_{\A^k_{\#}(X)}$.
Therefore we get the split exact sequence. Similarly we will obtain the other split exact sequence.

For the remaining part of the Theorem, observe that 
$$\A^k_{\#,X}(\XBY)\cap A^k_{\#,Y}(\XBY) = \{\ast \}.$$
Therefore if the factorization of an element in $\A^k_{\#}(\XBY)$ exists, it must be unique. Let $f=(\fX,\fY)\in \A^k_{\#}(\XBY)$. Then by $k$-reducibility we have $(p_X,\fY)\in \A^k_{\#,X}(\XBY)$ and $(\fX,p_Y)\A^k_{\#,Y}(\XBY)$. Then, 	
\begin{align*}
(p_X,\fY)\circ (\fX,p_Y)& = (\fX,\fY\circ(\fX,p_Y))\\
& \simeq^B_B (\fX,\fY\circ \iota_Y\circ p_Y\circ (\fX,p_Y))\\
& = (\fX,\fY\circ \iota_Y\circ p_Y)\\
& \simeq^B_B (\fX,\fY).
\end{align*}

\end{proof}

Using the next Lemma, we deduce a sufficient criterion for the conditions of Theorem \ref{ftsg} to be satisfied. Recall that the \emph{fibrewise wedge product}  $X\vee_B Y$ is defined as the pushout of the diagram $Y \xleftarrow{t} B \xrightarrow{s} X.$ Therefore $X\vee_B Y : = (X\times_B B) \cup (B\times_B Y)\subseteq X\times_B Y$. Moreover \emph{fibrewise smash product} is $X\wedge_B Y : = (X\times_B Y)/_B (X\vee_B Y)$.

$$
\xymatrix@R=1.2cm@C=1.2cm{
	B \ar[r]^{s}      \ar[d]_{t}         & X \ar[d]_{\iota_1} \ar@/^1pc/[rdd]^{\iota_X}   \\
	Y \ar[r]^{\iota_2} \ar@/_1pc/[rrd]_{\iota_Y} & X\vee_B Y \ar@{-->}[rd]^{\iota}             \\
	&                                   & X\times_B Y
}
$$

\vspace{0.5cm}

\noindent The maps $\iota_1\colon X\to X\vee_B Y$ and $\iota_2\colon Y\to X\vee_B Y$ can be described as $\iota_1(x):= (x,p(x)), ~~\iota_2(y):= (q(y),y)$. The map $\iota$ is defined by the universal property of pushout.

\begin{lemma}\label{hcm}
With notations as above, assume that $\iota^{\#}\colon [\XBY,Y]^B_B\to [X\vee_B Y,Y]^B_B$ is injective and $[X,Y]^B_B = \{\ast\}$. Then for each self-map $f\colon \XBY\to \XBY$, we have $\fY\simeq^B_B \fY\circ \iota_Y\circ p_Y$.
\end{lemma}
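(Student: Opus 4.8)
The plan is to use the injectivity of $\iota^{\#}$ to push the comparison down to the fibrewise wedge $X\vee_B Y$, where it reduces to a direct computation. Since $\iota^{\#}(\fY)=\fY\circ \iota$ and $\iota^{\#}(\fY\circ \iota_Y\circ p_Y)=\fY\circ \iota_Y\circ p_Y\circ \iota$, the injectivity of $\iota^{\#}$ shows that the desired identity $\fY\simeq^B_B \fY\circ \iota_Y\circ p_Y$ follows once we prove that the two maps $\fY\circ \iota$ and $\fY\circ \iota_Y\circ p_Y\circ \iota$ from $X\vee_B Y$ to $Y$ are fibrewise pointed homotopic.

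To compare these two maps out of the pushout $X\vee_B Y$, I would restrict along the structure maps $\iota_1\colon X\to X\vee_B Y$ and $\iota_2\colon Y\to X\vee_B Y$ and invoke the universal property. On the $Y$-summand the two restrictions coincide on the nose: using $\iota\circ \iota_2=\iota_Y$ and that $\iota_Y$ is a section of $p_Y$, both equal $\fY\circ \iota_Y$. On the $X$-summand, $\iota\circ \iota_1=\iota_X$, and the computations $p_Y\circ \iota_X=t\circ p$ and $\fY\circ(s,t)=t$ (the second because $\fY$ is fibrewise pointed) give $\fY\circ \iota_Y\circ p_Y\circ \iota_X=\fY\circ(s,t)\circ p=t\circ p$, the fibrewise constant map, whereas the hypothesis $[X,Y]^B_B=\{\ast\}$ forces $\fY\circ \iota_X\simeq^B_B t\circ p$. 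Thus the two restrictions to $X$ agree up to a fibrewise pointed null-homotopy.

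It remains to glue these data into one fibrewise pointed homotopy on $X\vee_B Y$, and this is the step I expect to require the most care. The null-homotopy on the $X$-summand can be chosen fibrewise pointed, hence stationary along $s$, while the homotopy on the $Y$-summand is constant; since $\iota_X\circ s=(s,t)=\iota_Y\circ t$, both restrict over the common copy of $B$ to the constant homotopy at the section $t$. As $s\colon B\to X$ and $t\colon B\to Y$ are fibrewise cofibrations under the standing CW hypotheses, the pushout square is a fibrewise homotopy pushout, and the compatible homotopies on the two summands assemble --- via the universal property applied to the fibrewise cylinder --- into a fibrewise pointed homotopy $\fY\circ \iota\simeq^B_B \fY\circ \iota_Y\circ p_Y\circ \iota$. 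Applying the injectivity of $\iota^{\#}$ then gives $\fY\simeq^B_B \fY\circ \iota_Y\circ p_Y$, completing the proof. Everything except the gluing is immediate substitution using that $\iota_X$ and $\iota_Y$ are sections of $p_X$ and $p_Y$.
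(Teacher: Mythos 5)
Your proof is correct and follows essentially the same route as the paper's: restrict the two maps $\fY\circ\iota$ and $\fY\circ\iota_Y\circ p_Y\circ\iota$ along $\iota_1$ and $\iota_2$, observe they agree strictly on the $Y$-summand and are both fibrewise pointed null-homotopic on the $X$-summand by $[X,Y]^B_B=\{\ast\}$, and then apply the injectivity of $\iota^{\#}$. The only difference is that you make explicit the compatibility of the two homotopies over the section (both stationary at $t$) and the gluing over the pushout, a step the paper leaves implicit when it asserts that checking after precomposition with $\iota_1$ and $\iota_2$ suffices.
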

	
\begin{proof}
It is sufficient to show that $\iota^{\#}(\fY) = \iota^{\#}(\fY\circ \iota_Y\circ p_Y)$, i.e. $\fY\circ \iota = \fY\circ \iota_Y\circ p_Y\circ \iota$. To prove this, it is enough to show the equalities if we precompose with $\iota_1$ and $\iota_2.$ If we precompose with $\iota_1$, then both side becomes element of $[X,Y]^B_B = \{\ast\}$. Therefore they are homotopic.
Moreover for precomposing with $\iota_2$, observe that,
$$\fY\circ \iota_Y\circ p_Y\circ \iota\circ \iota_2 = \fY\circ \iota_Y\circ p_Y\circ \iota_Y  = \fY\circ \iota_Y =  \fY\circ \iota \circ \iota_2.$$
\end{proof}	

\noindent Recall that an ex-space $Y$ is called \emph{fibrewise group-like} if it is a fibrewise Hopf space for which the fibrewise multiplication is fibrewise homotopy-associative and admits a fibrewise homotopy inverse (cf. \cite[Part I, Section 11]{cjfht}. In this case $[Z, Y]^B_B$ is a group for any other ex-space $Z$.

\begin{corollary}\label{cftsg}
Assume $X,Y \in \CWBB$ and $Y$ is fibrewise group-like. If $[X\wedge_B Y,Y]^B_B = \{\ast\}$ and $[X,Y]^B_B = \{\ast\}$, then the conditions of Theorem \ref{ftsg} are satisfied.
\end{corollary}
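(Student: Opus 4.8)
The plan is to verify the two hypotheses of Theorem \ref{ftsg} separately: (i) every $f = (\fX, \fY) \in \A^k_{\#}(\XBY)$ is $k$-reducible, and (ii) $\fY \simeq^B_B \fY \circ \iota_Y \circ p_Y$ for every such $f$. The second condition is handled almost immediately by Lemma \ref{hcm}, provided I can check its two hypotheses; the reducibility condition (i) will follow from Proposition \ref{rsem}(a) once I confirm its hypothesis. The crux is therefore to deduce, from the stated assumptions that $Y$ is fibrewise group-like with $[X\wedge_B Y, Y]^B_B = \{\ast\}$ and $[X, Y]^B_B = \{\ast\}$, the inputs required by those two earlier results.

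First I would dispose of reducibility. Since $[X, Y]^B_B = \{\ast\}$ is assumed outright, Proposition \ref{rsem}(a) applies directly and gives that every $f \in \A^k_{\#}(\XBY)$ is $k$-reducible. So condition (i) of Theorem \ref{ftsg} holds with no extra work. Next I would verify the homotopy condition (ii) via Lemma \ref{hcm}. That lemma needs two things: that $[X, Y]^B_B = \{\ast\}$ (again assumed), and that the restriction $\iota^{\#}\colon [\XBY, Y]^B_B \to [X\vee_B Y, Y]^B_B$ is injective. The entire content of the corollary thus reduces to establishing this injectivity.

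To get injectivity of $\iota^{\#}$, I would exploit the cofibration (pushout) structure $X\vee_B Y \hookrightarrow \XBY$ with fibrewise cofibre $X\wedge_B Y$, together with the fact that $Y$ is fibrewise group-like so that each set $[Z, Y]^B_B$ is a genuine group and the restriction maps are group homomorphisms. The plan is to invoke the fibrewise (co)exact Puppe-type sequence associated to the inclusion $X\vee_B Y \hookrightarrow \XBY$, giving an exact sequence of groups
$$[X\wedge_B Y, Y]^B_B \longrightarrow [\XBY, Y]^B_B \xrightarrow{\ \iota^{\#}\ } [X\vee_B Y, Y]^B_B.$$
The kernel of $\iota^{\#}$ is the image of the map from $[X\wedge_B Y, Y]^B_B$; since that group is trivial by hypothesis, the kernel is trivial, and a group homomorphism with trivial kernel is injective. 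This yields exactly the hypothesis Lemma \ref{hcm} requires, and combining the two verified conditions completes the proof that both hypotheses of Theorem \ref{ftsg} are met.

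The main obstacle I anticipate is justifying the exactness of that fibrewise cofibre sequence rigorously in the ex-space setting: one must know that the quotient $(\XBY)/_B(X\vee_B Y) = X\wedge_B Y$ sits in a fibrewise cofibration, that the group structure on $[-, Y]^B_B$ coming from $Y$ being fibrewise group-like is compatible with the coexact sequence so that $\iota^{\#}$ is a homomorphism and exactness is an equality of subgroups rather than merely pointed sets, and that the connecting map has image equal to $\Ker \iota^{\#}$. These are standard facts for fibrewise Hopf spaces (as developed in \cite{cjfht}), but one should cite them carefully; the bookkeeping of basepoints/sections and the translation of "trivial kernel implies injective" into the group-theoretic statement is where care is needed, whereas the set-theoretic algebra is routine.
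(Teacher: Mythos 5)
Your proposal is correct and follows essentially the same route as the paper: reducibility comes from Proposition \ref{rsem}(a) via $[X,Y]^B_B=\{\ast\}$, and the homotopy condition comes from Lemma \ref{hcm} after deducing injectivity of $\iota^{\#}$ from the fibrewise Barratt--Puppe sequence of the cofibration $X\vee_B Y\hookrightarrow \XBY$ with cofibre $X\wedge_B Y$, using that $Y$ is fibrewise group-like so the relevant terms are groups and trivial kernel implies injectivity. The paper's proof is exactly this argument.
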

	
\begin{proof}
We get the reducibility of maps from Lemma \ref{rsem}(a). For the other condition, consider the fibrewise pointed cofibre sequence: $$X\vee_B Y\xrightarrow[]{\iota}\XBY\xrightarrow[]{q} X\wedge_B Y.$$ Therefore we have the Barrat-Puppe sequence $$\cdots \to [\Sigma(X\vee_B Y),Y]^B_B\to [X\wedge_B Y,Y]^B_B\xrightarrow[]{q^{\#}}[\XBY,Y]^B_B\xrightarrow[]{\iota^{\#}}[X\vee_B Y,Y]^B_B.$$ Since $Y$ is a fibrewise group-like space, so the last three terms are groups and $\iota^{\#}$ and $q^{\#}$ are group homomorphisms. Hence $[X\wedge_B Y, Y]^B_B = \{\ast\}$ implies that $\iota^{\#}$ is injective. Therefore we get the desired result by using Lemma \ref{hcm}.

\end{proof}
\begin{example}
Let $X$ be a pointed CW-complex with base point $x_0$. Consider two ex-fibrations $(X\times B,p,s)$ and $(B,\Id,\Id)$, where $p\colon X\times B\to B$ is a trivial fibration with section $s\colon B\to X\times B$ defined as $b\mapsto (x_0,b)$. Observe that $[X\times B\wedge_B B, B]^B_B = \{\ast\}$ and $[X\times B, B]^B_B = \{\ast\}$. If $B$ is a fibrewise group-like space, then using Theorem \ref{ftsg} we have $$\A^k_{\#}(X\times B\times_B B) = \A^k_{\#,X\times B}(X\times B\times_B B)\cdot \A^k_{\#,B}(X\times B\times_B B).$$
\end{example}

The following Theorem is a monoid analogue of a result of Booth and Heath \cite[Theorem 3.1]{Gps}.
\begin{theorem}\label{hbthm}
Assume that $X$ is a fibrewise group-like space such that $[X\wedge_B Y,X]^B_B = \{\ast\}$ and $[Y,X]^B_B = \{\ast\}$. Then we have the following split exact sequence of monoids, 
$$0\to \A^{k,Y}_{\#,X}(X\times_B Y)\to \A^k_{\#}(X\times_B Y)\xrightarrow[]{\phi} \A^k_{\#}(X)\times \A^k_{\#}(Y)\to 0,$$
 where $\phi(\fX, \fY) = (f_X\circ \iota_X,f_Y\circ \iota_Y).$
\end{theorem}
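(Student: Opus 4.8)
The plan is to establish the split exact sequence by following the structural blueprint laid down in Theorem \ref{ftsg}, but now mapping to the full product $\A^k_{\#}(X)\times \A^k_{\#}(Y)$ rather than to a single factor. First I would verify that $\phi$ is well-defined: given $f=(\fX,\fY)\in \A^k_{\#}(\XBY)$, I must check that $\fX\circ \iota_X\in \A^k_{\#}(X)$ and $\fY\circ \iota_Y\in \A^k_{\#}(Y)$. This is exactly where the hypotheses $[Y,X]^B_B = \{\ast\}$ (together with $X$ being fibrewise group-like, so that $[X\wedge_B Y,X]^B_B=\{\ast\}$ forces reducibility via the Barratt--Puppe argument of Corollary \ref{cftsg}) come into play. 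Indeed, by Proposition \ref{rsem}(a) the condition $[Y,X]^B_B=\{\ast\}$ already guarantees that every $f\in\A^k_{\#}(\XBY)$ is $k$-reducible, so $\fX\circ\iota_X$ and $\fY\circ\iota_Y$ land in the appropriate monoids by Proposition \ref{mhom}.

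Next I would prove that $\phi$ is a monoid homomorphism. This is the step where I expect the computation to be most delicate, since multiplicativity in \emph{both} coordinates simultaneously is stronger than what Theorem \ref{ftsg} required. The key identity to exploit is that, because $[Y,X]^B_B=\{\ast\}$, the map $\fX$ is forced to factor (up to fibrewise homotopy) through $\iota_X\circ p_X$, i.e. $\fX\simeq^B_B \fX\circ\iota_X\circ p_X$ — this is the mirror of the hypothesis $\fY\simeq^B_B \fY\circ\iota_Y\circ p_Y$ in Theorem \ref{ftsg}, obtained here via Lemma \ref{hcm} applied with the roles of $X$ and $Y$ interchanged. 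Using this factorization one computes, for $f=(\fX,\fY)$ and $g=(g_X,g_Y)$, that the $X$-component of $f\circ g$ composed with $\iota_X$ reduces to $(\fX\circ\iota_X)\circ(g_X\circ\iota_X)$, and similarly the $Y$-component needs the analogous reduction. \textbf{The main obstacle} will be controlling the cross-terms in the $Y$-coordinate: a priori $\fY$ depends on both variables, and showing that $\phi$ respects composition in the second slot requires the smash-product vanishing $[X\wedge_B Y,X]^B_B=\{\ast\}$ to kill the mixed contribution, precisely as in the proof of Corollary \ref{cftsg}.

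I would then identify the kernel. By definition $\phi(\fX,\fY)=(\Id_X,\Id_Y)$ means $\fX\circ\iota_X\simeq^B_B\Id_X$ and $\fY\circ\iota_Y\simeq^B_B\Id_Y$. I claim this kernel is exactly $\A^{k,Y}_{\#,X}(X\times_B Y)$. The inclusion $\ker\phi\supseteq\A^{k,Y}_{\#,X}$ follows by precomposing the defining relation with the projections, exactly as in Theorem \ref{ftsg}; for the reverse inclusion I would use that the condition $[Y,X]^B_B=\{\ast\}$ forces any $f$ with $\fX\circ\iota_X\simeq^B_B\Id_X$ to have the shape $(p_X,\fY)$ up to homotopy, so that the kernel condition on the $Y$-slot places $f$ in $\A^{k,Y}_{\#,X}$.

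Finally I would construct the splitting. The natural candidate is $$\psi\colon \A^k_{\#}(X)\times \A^k_{\#}(Y)\to \A^k_{\#}(X\times_B Y),\qquad (h,\ell)\mapsto (h\circ p_X,\,\ell\circ p_Y),$$ mirroring the map $\psi_X$ of Theorem \ref{ftsg}. One checks via Proposition \ref{mhom} that $\psi$ lands in $\A^k_{\#}(X\times_B Y)$, that it is a homomorphism, and that $\phi\circ\psi=\Id$ since $(h\circ p_X)\circ\iota_X=h$ and $(\ell\circ p_Y)\circ\iota_Y=\ell$. The homomorphism property of $\psi$ again leans on the factorization identities forced by the triviality hypotheses, but once multiplicativity of $\phi$ is settled these verifications are routine, so the only genuine difficulty remains the cross-term analysis in the second step.
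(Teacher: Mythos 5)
Your proposal is correct and follows essentially the same route as the paper's proof: well-definedness via Proposition \ref{rsem}(a) and Proposition \ref{mhom}, multiplicativity of $\phi$ via the forced factorization $\fX\simeq^B_B \fX\circ\iota_X\circ p_X$ coming from Corollary \ref{cftsg} with the roles of $X$ and $Y$ interchanged, kernel identification by showing the $X$-component collapses to $p_X$, and the splitting $\psi(h,\ell)=(h\circ p_X,\ell\circ p_Y)$. The only cosmetic difference is that the paper phrases the kernel step as $\A^{k,X}_{\#,Y}(\XBY)=\{\ast\}$ so that $\Ker(\phi)$ reduces to $\Ker(\phi_Y)$, which is the same argument you sketch.
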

\begin{proof}
Since $[Y,X]^B_B = \{\ast\}$, then each map $f\in \A^k_{\#}(X\times_B Y)$ is $k$-reducible from Proposition \ref{rsem}(a). So the map $\phi$ is well-defined. Moreover, from Corollary \ref{cftsg} we have $f_X \simeq^B_B f_X\circ \iota_X\circ p_X$, for each self-map $f = (f_X,f_Y)$.  As in the proof of the first part of Theorem \ref{ftsg} we have $\iota_Y\circ \fY\circ \iota_Y  = (p_X,\fY)\circ \iota_Y.$ Therefore,
\begin{align*}
\phi \big((f_X,f_Y)\circ (g_X,g_Y)\big)& = \phi \big(f_X\circ (g_X,g_Y),f_Y\circ (g_X,g_Y)\big)\\
& = \big(f_X\circ (g_X,g_Y)\circ \iota_X,f_Y\circ (g_X,g_Y)\circ \iota_Y \big)\\
& \simeq^B_B \big(f_X\circ \iota_X\circ p_X\circ (g_X,g_Y)\circ \iota_X,f_Y\circ(g_X\circ \iota_X\circ p_X,g_Y)\circ \iota_Y \big)\\
& = \big(f_X\circ \iota_x\circ g_X\circ \iota_X,f_Y\circ (g_X\circ \iota_X\circ p_X\circ \iota_Y,g_Y\circ \iota_Y) \big)\\
& = \big(f_X\circ \iota_X\circ g_X\circ \iota_X,f_Y\circ(p_X\circ \iota_Y,g_Y\circ \iota_Y) \big)\\
& = \big(f_X\circ \iota_X\circ g_X\circ \iota_X,f_Y\circ (p_X,g_Y)\circ \iota_Y \big)\\
& = \big(f_X\circ \iota_X\circ g_X\circ \iota_X,f_Y\circ \iota_Y\circ g_Y\circ \iota_Y \big)\\
& = \phi(f_X,f_Y)\circ \phi(g_X,g_Y).
\end{align*}
This shows that $\phi$ is a monoid homomorphism. 

Any element of $\A^{k,X}_{\#,Y}(\XBY)$ is of the form $(f_X,p_Y)$ satisfying $(f_X,p_Y)\circ \iota_X = \iota_X$. Precomposing with $p_X$ we get $f_X\circ \iota_X = \Id_X$. Now post-composing with $p_X$ we have $f_X\circ \iota_X\circ p_X = p_X$. This implies $f_X\simeq^B_B  p_X$.
Therefore $\A^{k,X}_{\#,Y}(\XBY) = \{\ast\}$. So $\Ker(\phi)$ can be identified with $\Ker(\phi_Y) = \A^{k,Y}_{\#,X}(X\times_B Y)$. Let us define a map $$\psi\colon \A^k_{\#}(X)\times \A^K_{\#}(Y)\to \A^k_{\#}(X\times_B Y),~~\psi(f,g) := (f\circ p_X,g\circ p_Y).$$ 
Note that, $$(f\circ p_X,g\circ p_Y) = (f\circ p_X,p_Y)\circ(p_X,g\circ p_Y)\in \A^k_{\#}(\XBY),$$ since  $(f\circ p_X,p_Y),~(p_X,g\circ p_Y)\in \A^k_{\#}(\XBY)$ from Proposition \ref{mhom}. Therefore $\psi$ is well-defined.
Clearly $\psi$ is a monoid homomorphism such that $\phi \circ \psi = \Id_{\A^k_{\#}(X)\times \A^k_{\#}(Y)}$. This gives a splitting map of $\phi$.

\end{proof}	

\sect{self-closeness number}\label{ssclose}
The self-closeness number of a space was introduced by Choi and Lee in \cite{clcngh}. Some computations of this invariant were further done in \cite{shec,OYscf,OYsc}. We define a parametrized version of self-closeness number. 
Recall that we have the following chain: 
$\AutB(X)\subseteq \cdots \subseteq \A^{n+1}_{\#}(X)\subseteq \A^n_{\#}(X)\subseteq \cdots \subseteq \A^1_{\#}(X)\subseteq \A^0_{\#}(X) = [X,X]^B_B.$ For a finite dimensional $X$, we have $\A^{\dim X}_{\#}(X) = \AutB(X) $. In general, if $k = \infty $ then $\A^k_{\#}(X) = \AutB(X)$. 

\begin{definition}
The \emph{self-closeness number} of $X\in \CWBB$ is defined as $$\N(X) := \min \big\{ k: \A^k_{\#}(X) = \AutB(X),~~  k\geq 0 \big\}.$$
If no such $k$ exists, then $\N(X)$ is defined to be infinite. 
\end{definition}

\begin{lemma}
Let $X\in \CWBB$ is $m$-connected and of dimension $n$ where $m<n$. Then $m+1\leq \N(X)\leq n$, provided that $\AutB(X)\neq [X,X]^B_B$.
\end{lemma}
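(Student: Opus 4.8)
The plan is to prove the two inequalities separately, each reducing immediately to facts already recorded in the excerpt, so that the whole argument is a short bookkeeping on the chain of submonoids in Equation \ref{eqchain}.

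For the lower bound $\N(X)\ge m+1$ I would exploit the $m$-connectedness to show that $\A^k_{\#}(X)$ is the \emph{entire} monoid $[X,X]^B_B$ for every $k\le m$. Indeed, $m$-connectedness means $\pi_i(X)=0$ for $0\le i\le m$, so for any $k$ with $0\le k\le m$, any $f\in[X,X]^B_B$, and any $i$ with $0\le i\le k$, the map $\pi_i(f)\colon\pi_i(X)\to\pi_i(X)$ is a homomorphism of trivial groups and hence an isomorphism. By Definition \ref{dmon} this puts every $f$ in $\A^k_{\#}(X)$, so $\A^k_{\#}(X)=[X,X]^B_B$ for all $0\le k\le m$. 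Invoking the standing hypothesis $\AutB(X)\ne[X,X]^B_B$, none of these monoids equals $\AutB(X)$; since $\N(X)$ is the least $k$ with $\A^k_{\#}(X)=\AutB(X)$, this forces $\N(X)\ge m+1$.

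For the upper bound $\N(X)\le n$ I would simply quote the observation made just before the definition of $\N$: for finite dimensional $X$ one has $\A^{\dim X}_{\#}(X)=\AutB(X)$, a consequence of Whitehead's theorem together with the fact that a homotopy equivalence between ex-spaces with fibration projections is automatically a fibrewise homotopy equivalence. With $\dim X=n$ this reads $\A^n_{\#}(X)=\AutB(X)$, so $n$ lies in the set $\{k\ge 0:\A^k_{\#}(X)=\AutB(X)\}$ and therefore $\N(X)\le n$. Combining the two estimates gives $m+1\le\N(X)\le n$.

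I do not expect a substantive obstacle here; the content is entirely in the definitions and in the already-cited finite-dimensionality fact. The one place deserving care is the bookkeeping around the hypothesis $\AutB(X)\ne[X,X]^B_B$: it is exactly what supplies the strictness $\A^m_{\#}(X)\ne\AutB(X)$, and without it $\N(X)$ could collapse to $0$ and the lower bound would fail. I would also make explicit the convention that \emph{$m$-connected} means $\pi_i(X)=0$ for $i\le m$ (on the total space, which is what enters Definition \ref{dmon}), since it is precisely this range that makes $\A^m_{\#}(X)$ --- rather than only $\A^{m-1}_{\#}(X)$ --- the full monoid, and hence makes the lower bound sharp at $m+1$.
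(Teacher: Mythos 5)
Your lower bound is exactly the paper's argument: $m$-connectedness forces $\A^{k}_{\#}(X)=[X,X]^B_B$ for all $k\leq m$, and the hypothesis $\AutB(X)\neq[X,X]^B_B$ then gives $\N(X)\geq m+1$. No issue there.

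The upper bound is where you have a genuine gap. The statement you quote from the introduction --- that $\A^{\dim X}_{\#}(X)=\AutB(X)$ for finite dimensional $X$ --- is an unproved ``Observe that\dots'' remark, and its justification is precisely the content of this lemma; citing it here is circular relative to the paper's logical structure. Moreover, the justification you sketch for it (Whitehead's theorem plus Dold's theorem on fibre homotopy equivalences) only covers the paper's \emph{other} assertion, namely $\lim_{k\to\infty}\A^k_{\#}(X)=\AutB(X)$, where one has isomorphisms on \emph{all} homotopy groups. For $f\in\A^{n}_{\#}(X)$ you only know $\pi_i(f)$ is an isomorphism for $i\leq n=\dim X$, and the classical Whitehead theorem does not apply; you need the finite-dimensional refinement, and in the fibrewise pointed setting at that. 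The paper supplies this: it invokes \cite[Proposition 17.6]{cjfht} to conclude that such an $f$ induces a surjection $f_*\colon[X,X]^B_B\to[X,X]^B_B$, produces $h$ with $f\circ h\simeq^B_B\Id_X$, observes that $h$ then also induces isomorphisms on $\pi_i$ for $i\leq n$ and hence admits its own right inverse $g$, and the short monoid computation $f\circ h\circ g\simeq^B_B f$ forces $g\simeq^B_B f$, whence $h$ is a two-sided fibrewise homotopy inverse of $f$ and $f\in\AutB(X)$. Some argument of this kind (or an explicit appeal to a fibrewise finite-dimensional Whitehead theorem) is needed to close your proof.
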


\begin{proof}
Since $X$ is $m$ connected, we have $\pi_i(X) = 0,~~~\forall~ i\leq m$. Therefore $$\A^{m}_{\#}(X) = \cdots = \A^1_{\#}(X) = \A^0_{\#}(X) = [X,X]^B_B.$$	Since $\AutB(X) \neq [X,X]^B_B$, thus $\N(X)\geq m+1$. To complete the proof, it is sufficient to show that $\A^n_{\#}(X) = \AutB(X)$. Let $f\in \A^n_{\#}(X)$, then the map $\pi_i(f)\colon \pi_i(X)\to \pi_i(X)$ is isomorphism for all $\forall~ i\leq n$. From \cite[Proposition 17.6]{cjfht}, we have $f_*\colon [X,X]^B_B\to [X,X]^B_B$ is a surjective. Therefore there is a map $h\in [X,X]^B_B$ such that $f_*(h) = \Id_X,~~ \text{i.e.}~~ f\circ h \simeq^B_B \Id_X$. So $ \pi_i(f)\circ \pi_i(h) = \Id_{\pi_i(X)}.$ Thus $\pi_i(h)$ is an isomorphism for all $i\leq n$. Therefore $h_*\colon [X,X]^B_B\to [X,X]^B_B$ is surjective, so there is a map $g\in [X,X]^B_B$ such that $h_*(g) = \Id_X,~ \text{i.e.}~ h\circ g\simeq^B_B \Id_X$. Thus $f\circ h\circ g \simeq^B_B f\Rightarrow g\simeq^B_B f$. Therefore $f\circ h \simeq^B_B h\circ f\simeq^B_B \Id_X$, consequently $f\in \AutB(X)$.

\end{proof}

The following Corollary give us some condition when $\N(X)$ is equal to an integer $n$.
\begin{corollary}
Let $X\in \CWBB$ is $n-1$ connected and of dimension $n$. If $\AutB(X) \neq [X,X]^B_B$, then $\N(X) = n$.
\end{corollary}

We now show that $\N(X)$ is homotopy invariant. 

\begin{proposition}\label{eqscn}
If $X,Y\in \CWBB$ are fibrewise pointed homotopy equivalent, then $\N(X) = \N(Y)$.
\end{proposition}

\begin{proof}
It is sufficient to show that given any non-negative integer $k, ~\AutB(X) = \A^k_{\#}(X)$ if and only if $\AutB(Y) = \A^k_{\#}(Y)$. Let $\phi\colon X\to Y$ be a fibrewise pointed homotopy equivalence with homotopy inverse $\psi\colon Y\to X$.
Then $\phi_*\colon \A^k_{\#}(X)\to \A^k_{\#}(Y)$ defined as $f \mapsto \phi\circ f\circ \psi$, is a monoid isomorphism with inverse $\psi_*\colon \A^k_{\#}(Y)\to \A^k_{\#}(X)$. 
Therefore the restriction map $\phi_*\colon \AutB(X)\to \AutB(Y)$ is an isomorphism of group.
Let us assume that $\AutB(X) = \A^k_{\#}(X)$. If $h\in \A^k_{\#}(Y)$ then $\psi_*(h)\in \A^k_{\#}(X) = \AutB(X)$. Therefore $\phi_*\big(\psi_*(h)\big)\in \AutB(Y)$, i.e. $h\in \AutB(Y)$. Thus $\A^k_{\#}(Y)\subseteq \AutB(Y)$, i.e. $\A^k_{\#}(Y) = \AutB(Y)$. Similarly we can prove that if $\AutB(Y) = \A^k_{\#}(Y)$ then $\AutB(X) = \A^k_{\#}(X)$.

\end{proof}

\begin{lemma}\label{rbtsn}
Let $(X,p,s)\in \CWBB$, where $X_b\xrightarrow[]{\iota} X\xrightarrow[]{p} B$. Then $$\N(X)\leq \NA(X_b).$$

If $p$ is a trivial fibration then the equality holds.
\end{lemma}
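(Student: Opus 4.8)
The plan is to push everything down to the fibre and let Proposition \ref{iprop} do the work: that proposition identifies membership of $f$ in $\A^k_{\#}(X)$ (resp. in $\AutB(X)$) with membership of its fibre restriction $f_b$ in $\AAA^k_{\#}(\Xb)$ (resp. in $\Aut(\Xb)$), so both the inequality and the equality become statements about self-maps of $\Xb$.

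First I would establish $\N(X)\leq\NA(\Xb)$. Set $N:=\NA(\Xb)$ and take an arbitrary $f\in\A^N_{\#}(X)$. By Proposition \ref{iprop} its restriction satisfies $f_b\in\AAA^N_{\#}(\Xb)$; but by the very definition of $\NA$ we have $\AAA^N_{\#}(\Xb)=\Aut(\Xb)$, so $f_b\in\Aut(\Xb)$, and Proposition \ref{iprop} then gives $f\in\AutB(X)$. Combined with the always-valid inclusion $\AutB(X)\subseteq\A^N_{\#}(X)$ from the chain in Equation \ref{eqchain}, this yields $\A^N_{\#}(X)=\AutB(X)$, hence $\N(X)\leq N$. (When $\NA(\Xb)=\infty$ the bound is vacuous.)

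For the equality under the hypothesis that $p$ is trivial, it remains to prove $\NA(\Xb)\leq\N(X)$. Here I would use the product identification $X\cong B\times\Xb$ supplied by triviality, under which the projection is the first coordinate and the section is $b\mapsto(b,\ast)$. Given any pointed self-map $g\in\AAA^N_{\#}(\Xb)$, with $N:=\N(X)$ assumed finite (if $\N(X)=\infty$ the first part already forces $\NA(\Xb)=\infty$, and equality holds), form $f:=\Id_B\times g$. One checks directly that $f$ is fibrewise pointed — it commutes with the projection and fixes the section since $g(\ast)=\ast$ — and that its restriction to the fibre over $b$ is exactly $f_b=g$. Since $g\in\AAA^N_{\#}(\Xb)$, Proposition \ref{iprop} places $f$ in $\A^N_{\#}(X)=\AutB(X)$, whence $f_b=g\in\Aut(\Xb)$ by the same proposition. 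Thus $\AAA^N_{\#}(\Xb)=\Aut(\Xb)$, giving $\NA(\Xb)\leq N=\N(X)$, and together with the first part we get equality.

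Both directions are short once Proposition \ref{iprop} is in hand, so the only point genuinely needing care is the second: I must verify that the product self-map $\Id_B\times g$ is honestly fibrewise pointed and that its fibre restriction reproduces $g$ on the nose, so that an arbitrary element of $\AAA^N_{\#}(\Xb)$ is realized as the fibre of some $f\in\A^N_{\#}(X)$. This realization is exactly what transports the equality $\A^N_{\#}(X)=\AutB(X)$ back down to the fibre, and it is where triviality of $p$ is indispensable — for a nontrivial fibration a self-equivalence of the fibre need not extend to a fibrewise self-map, which is why only the inequality persists in general.
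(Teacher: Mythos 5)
Your argument is correct and follows essentially the same route as the paper: both directions reduce to Proposition \ref{iprop}, and the reverse inequality for trivial $p$ is obtained by realizing an arbitrary $g\in\AAA^{N}_{\#}(\Xb)$ as the fibre restriction of the product self-map $\Id_B\times g$ (the paper phrases this as a proof by contradiction, but the content is identical). No gaps.
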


\begin{proof}
We have to consider only the case $\NA(X_b)< \infty$. Assume that $\NA(X_b) = m$. Let $f\in \A^m_{\#}(X)$. From Proposition \ref{iprop} we have $f_b\in \AAA^m_{\#}(X_b) = \Aut(X_b)$. Therefore using the Proposition \ref{iprop} again we have $f\in \AutB(X)$. So $\A^m_{\#}(X) = \AutB(X)$. Hence by the definition of self-closeness number we have $\N(X)\leq m = \NA(X)$.

If $p$ is a trivial fibration, then $X=X_b\times B$ with $p$ is a second coordinate projection. If possible let $\A^k_{\#}(X) = \AutB(X)$ where $k<m$. Then for any element $g\in \AAA^k_{\#}(X_b)$ we have $g\times \Id_B\in \A^k_{\#}(X)= \AutB(X)$. 
Using Proposition \ref{iprop} we have  $(g\times \Id_B)_b = g\in \Aut(X_b)$. Thus $\AAA^k_{\#}(X_b) = \Aut(X_b)$, which contradict the fact that $\NA(X_b) = m$. Hence from the definition of self-closeness number we get  $\N(X) = m =\NA(X_b)$.

\end{proof}

Having established some basic properties of parametrized self-closeness number, we now find this number of fibred product in terms of the individual spaces.

\begin{theorem}\label{scn}
Let $X,Y\in \CWBB$, then we have $$ \N(\XBY) \geq \max \big\{ \N(X), \N(Y) \big\}.$$ 
\end{theorem}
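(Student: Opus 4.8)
The plan is to establish the two inequalities $\N(\XBY) \geq \N(X)$ and $\N(\XBY) \geq \N(Y)$ separately; since interchanging the roles of $X$ and $Y$ is symmetric, it suffices to prove the first. I would reduce this to the following claim: for every non-negative integer $k$, if $\A^k_{\#}(\XBY) = \AutB(\XBY)$, then $\A^k_{\#}(X) = \AutB(X)$. Phrased set-theoretically, the claim asserts that $\{k : \A^k_{\#}(\XBY) = \AutB(\XBY)\} \subseteq \{k : \A^k_{\#}(X) = \AutB(X)\}$, and taking minima (with the convention that $\min \emptyset = \infty$, which covers the case $\N(\XBY) = \infty$) immediately yields $\N(X) \leq \N(\XBY)$.

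To prove the claim I would fix $k$ with $\A^k_{\#}(\XBY) = \AutB(\XBY)$, take an arbitrary $f \in \A^k_{\#}(X)$, and lift it to $\XBY$ along the splitting built in Theorem \ref{ftsg}, namely the self-map $(f \circ p_X, p_Y)$ of $\XBY$. Because $\iota_X$ is a section of $p_X$, one has $(f \circ p_X) \circ \iota_X = f \circ (p_X \circ \iota_X) = f \in \A^k_{\#}(X)$, so Proposition \ref{mhom} guarantees $(f \circ p_X, p_Y) \in \A^k_{\#}(\XBY)$. By the standing hypothesis $\A^k_{\#}(\XBY) = \AutB(\XBY)$, this gives $(f \circ p_X, p_Y) \in \AutB(\XBY)$.

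I would then invoke the ``in particular'' part of Proposition \ref{mhom}: $(f \circ p_X, p_Y) \in \AutB(\XBY)$ if and only if $(f \circ p_X) \circ \iota_X \in \AutB(X)$. Since $(f \circ p_X) \circ \iota_X = f$, this forces $f \in \AutB(X)$. As $f \in \A^k_{\#}(X)$ was arbitrary, we obtain $\A^k_{\#}(X) \subseteq \AutB(X)$, and the reverse inclusion holds by the chain of Equation \ref{eqchain}; hence $\A^k_{\#}(X) = \AutB(X)$, which is the claim.

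I do not anticipate a serious obstacle here, since the proof uses the splitting only in one direction. The single point that needs care is the $\iota_X$--$p_X$ bookkeeping, i.e. verifying that lifting a genuine non-equivalence $f \in \A^k_{\#}(X) \setminus \AutB(X)$ produces a genuine non-equivalence $(f \circ p_X, p_Y) \in \A^k_{\#}(\XBY) \setminus \AutB(\XBY)$; this is exactly the ``if and only if'' content of Proposition \ref{mhom}. I would also remark that, unlike the equality statement in the second half of Theorem C(i), this inequality requires no reducibility hypothesis, precisely because the splitting $f \mapsto (f \circ p_X, p_Y)$ always lands inside the submonoid $\A^k_{\#,Y}(\XBY)$.
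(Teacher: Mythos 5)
Your proof is correct and follows essentially the same route as the paper: both arguments lift $f\in\A^k_{\#}(X)$ to the self-map $(f\circ p_X,p_Y)=f\times_B\Id_Y$ of $\XBY$, apply the hypothesis $\A^k_{\#}(\XBY)=\AutB(\XBY)$, and transfer the equivalence property back to $f$ (the paper does this for $f$ and $g$ simultaneously via $f\times_B g$, you do it one factor at a time by symmetry). The only difference is bookkeeping: the paper carries out both transfers by restricting to a fibre via Proposition \ref{iprop}, whereas you route them through Proposition \ref{mhom}, which is slightly cleaner but rests on the same fibrewise computation.
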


\begin{proof}
Consider the case $\N(\XBY)< \infty$. Let us assume that $\N(\XBY) = n$. It is sufficient to show that for any $f\in \A^n_{\#}(X), ~~ g\in \A^n_{\#}(Y)$ implies $f\in \AutB(X)$ and $g\in \AutB(Y)$. From Proposition \ref{iprop} we have $\fb\in \AAA^n_{\#}(\Xb)$ and        $ g_b\in \AAA^n_{\#}(\Yb)$ for some $b\in B$. Therefore $\pi_i(\fb)$ and $\pi_i(\gb)$ are isomorphisms for all $i = 0,\cdots,n$. So, $\pi_i(f_b\times g_b)$ are isomorphisms for $i = 0,\cdots,n$ i.e. $f_b\times g_b\in \AAA^n_{\#}(\Xb\times \Yb)$. 
 Now, $ \fBg\colon \XBY\to \XBY$ is a fibrewise pointed map defined by $(\fBg)(x,y):= \big(f(x),g(y)\big)$. Moreover $(\fBg)_b = (\fBg)|_{\Xb\times \Yb} = (\fb\times \gb)$. Therefore from Proposition \ref{iprop} we have $\fBg\in \A^n_{\#}(\XBY) = \AutB(\XBY).$ Again using Proposition \ref{iprop} we have $(f_b\times g_b)\in \Aut(\Xb\times \Yb)$.
Thus $\pi_i(\fb\times \gb)$ is an isomorphisms for all nonnegative integers $i$. So the maps $\pi_i(\fb)\times \pi_i(\gb)\colon \pi_i(\Xb)\times \pi_i(\Yb)\to \pi_i(\Xb)\times \pi_i(\Yb)$ are isomorphisms for all $i\geq 0$. Therefore $\pi_i(\fb)$ and $\pi_i(\gb)$ are isomorphisms for all $i\geq 0$. So $f_b\in \Aut(\Xb)$ and $g_b\in \Aut(\Yb)$. Hence $f\in \AutB(X),~~ g\in \AutB(Y)$ from Proposition \ref{iprop}. This implies that $\N(X)\leq n$ and $ \N(Y)\leq n$. Hence we have the desired result.

\end{proof}

The following result shows that if we assume the condition of reducibility, then we have equality in the above Theorem \ref{scn}. 

\begin{theorem}\label{escn}
Let $X,Y\in \CWBB$. For any $k\geq 0$, assume that each map of $\A^k_{\#}(\XBY)$ is $k$-reducible. Then $$\N(\XBY) = \max \big\{\N(X),\N(Y)\big\}.$$

\end{theorem}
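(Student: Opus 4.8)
The plan is to prove the two inequalities separately. The inequality $\N(\XBY) \geq \max\{\N(X),\N(Y)\}$ needs no extra hypothesis and is precisely Theorem \ref{scn}, so it remains only to establish $\N(\XBY) \leq \max\{\N(X),\N(Y)\}$ under the standing reducibility assumption. Writing $m := \max\{\N(X),\N(Y)\}$, this amounts to the single inclusion $\A^m_{\#}(\XBY) \subseteq \AutB(\XBY)$: the reverse inclusion is automatic from the chain of Equation \ref{eqchain}, and once equality holds the definition of $\N$ forces $\N(\XBY) \leq m$, completing the proof.

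First I would fix $f = (\fX,\fY) \in \A^m_{\#}(\XBY)$ and extract its diagonal data. By hypothesis $f$ is $m$-reducible, so $(\fX,p_Y)$ and $(p_X,\fY)$ both lie in $\A^m_{\#}(\XBY)$; Proposition \ref{mhom} identifies these memberships with $\fX\circ\iota_X \in \A^m_{\#}(X)$ and $\fY\circ\iota_Y \in \A^m_{\#}(Y)$. Since $m \geq \N(X)$ and $m \geq \N(Y)$, the chain of Equation \ref{eqchain} yields $\A^m_{\#}(X) \subseteq \A^{\N(X)}_{\#}(X) = \AutB(X)$, and similarly for $Y$, so in fact $\fX\circ\iota_X \in \AutB(X)$ and $\fY\circ\iota_Y \in \AutB(Y)$. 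Feeding this back through the ``in particular'' part of Proposition \ref{mhom} shows $(\fX,p_Y),(p_X,\fY) \in \AutB(\XBY)$, i.e. both one-sided maps are genuine fibrewise self-equivalences.

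The remaining and hardest step is to upgrade this to $f \in \AutB(\XBY)$. Here I would pass to a fibre via Proposition \ref{iprop}, reducing the claim to $f_b = (\fXb,\fYb) \in \Aut(\Xb\times\Yb)$, and analyse the induced map on homotopy through the matrix $M_i(\fXb,\fYb)$ of Equation \ref{eqmat}. The two conclusions of the previous paragraph say precisely that the diagonal entries $\pi_i(\fXb\circ\iota_{\Xb})$ and $\pi_i(\fYb\circ\iota_{\Yb})$ are isomorphisms for every $i \geq 0$, while $f \in \A^m_{\#}(\XBY)$ makes the whole matrix invertible for $0 \leq i \leq m$. The crux is to control the off-diagonal entries $\pi_i(\fXb\circ\iota_{\Yb})$ and $\pi_i(\fYb\circ\iota_{\Xb})$ in the unbounded range $i > m$: this is exactly where the universal reducibility hypothesis must enter, via Corollary \ref{kred} and the argument pattern of Proposition \ref{rsem}, to keep $M_i(\fXb,\fYb)$ invertible for all $i$ and hence conclude $f_b \in \Aut(\Xb\times\Yb)$. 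I expect this off-diagonal control beyond the range $i \leq m$ to be the main obstacle, since, unlike the factorisation in Theorem \ref{ftsg}, no auxiliary condition such as $\fY \simeq^B_B \fY\circ\iota_Y\circ p_Y$ is available here to make the off-diagonal blocks vanish outright.
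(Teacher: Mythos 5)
Your reduction to the single inclusion $\A^m_{\#}(\XBY)\subseteq\AutB(\XBY)$ with $m=\max\{\N(X),\N(Y)\}$, and your derivation that $\fX\circ\iota_X\in\AutB(X)$, $\fY\circ\iota_Y\in\AutB(Y)$, hence $(\fX,\pY),(\pX,\fY)\in\AutB(\XBY)$, coincide with the first half of the paper's proof. The gap is in the final step, which you correctly single out as the crux but do not actually carry out. Your plan --- pass to a fibre and show the matrix $M_i(\fXb,\fYb)$ of Equation \ref{eqmat} is invertible for all $i$ by ``controlling'' its off-diagonal entries --- has no mechanism behind it: a $2\times 2$ matrix of homomorphisms with isomorphisms on the diagonal need not be invertible (take all four entries to be the identity of $\mathbb{Z}$), and nothing in the reducibility hypothesis forces $\pi_i(\fXb\circ\iota_{\Yb})$ or $\pi_i(\fYb\circ\iota_{\Xb})$ to vanish for $i>m$; Proposition \ref{rsem} produces such vanishing only under triviality hypotheses on $[X,Y]^B_B$ or on $\Hom$ of fibre homotopy groups, none of which are assumed here. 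So the proposal stalls exactly where the theorem is proved.

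The paper closes this gap with the exact (not merely up-to-homotopy) factorization of Pave\v{s}i\'{c} \cite[Theorem 2.5]{pshp},
$$f=(\fX,\fY)=\big(\pX,\fY\circ(\fX,\pY)^{-1}\big)\circ(\fX,\pY),$$
which becomes available precisely because $(\fX,\pY)$ has already been shown to be invertible; this identity is the substitute for the auxiliary condition $\fY\simeq^B_B\fY\circ\iota_Y\circ\pY$ whose absence you noted. The second factor lies in $\AutB(\XBY)$ by the first half of the argument. For the first factor, one observes that $(\pX,\fY)\circ(\fX,\pY)^{-1}=(\bar{f}_X,\fY\circ(\fX,\pY)^{-1})$ is a composite of automorphisms, hence lies in $\A^k_{\#}(\XBY)$ for every $k$; applying the reducibility hypothesis a \emph{second} time, to this map rather than to $f$, yields $\fY\circ(\fX,\pY)^{-1}\circ\iota_Y\in\AutB(Y)$, whence $\big(\pX,\fY\circ(\fX,\pY)^{-1}\big)\in\AutB(\XBY)$ by Proposition \ref{mhom}, and $f$ is a composite of two fibrewise pointed self-equivalences. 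In effect the factorization triangularizes the very matrix you were trying to invert; without it, or some equivalent device, your argument does not go through.
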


\begin{proof}
If $\N(X)$ or $\N(Y)$ is infinite then $\N(\XBY)$ is infinite from Theorem \ref{scn}. So we only need to consider the case $\N(X)< \infty$ and $\N(Y)< \infty$. Let $\N(X) = m,~\N(Y) = n$ and $k =\max\{m,n\}$. Take $f = (\fX,\fY)\in \A^k_{\#}(\XBY)$. Then $\fX\circ \iota_X\in \A^k_{\#}(X)$ and $\fY\circ \iota_Y\in \A^k_{\#}(Y)$, using Corollary \ref{ckredu}. Since $k = \max \{m,n\}$, therefore $\fX\circ \iota_X\in \A^k_{\#}(X)\subseteq \A^m_{\#}(X) = \AutB(X)$ and  $\fY\circ \iota_Y\in \A^k_{\#}(Y)\subseteq \A^n_{\#}(Y) = \AutB(Y)$. Hence by Proposition \ref{mhom}, we have $$(\fX,\pY)\in \AutB(\XBY), ~(\pX,\fY)\in \AutB(\XBY).$$
As in the proof of \cite[Theorem 2.5]{pshp}  we have, $$f = (\fX,\fY) = \big(\pX,\fY\circ (\fX,\pY)^{-1}\big)\circ (\fX,\pY).$$
Note that, $(\fX,\pY)^{-1} = (\bar{f}_X,p_Y)\in \AutB(\XBY)$ ~(cf. \cite[Proposition 2.3(a)]{pshp}).
Observe that $$(p_X,\fY)\circ (\fX,p_Y)^{-1} = (\bar{f}_X,\fY\circ (\fX,p_Y)^{-1})\in \AutB(\XBY).$$
By reducibility assumption we get $\fY\circ (\fX,p_Y)^{-1}\circ \iota_Y\in \AutB(Y)$. Therefore $(p_X,\fY\circ (\fX,p_Y)^{-1})\in \AutB(\XBY)$ from Proposition \ref{mhom}. Consequently $f\in \AutB(\XBY)$.
Therefore $\A^k_{\#}(\XBY) = \AutB(\XBY)$, so $\N(\XBY)\leq k$. Using Theorem \ref{scn} we get $$\N(\XBY) = \max\big \{\N(X),\N(Y)\}.$$

\end{proof}

Combining Proposition \ref{rsem}(a) and Theorem \ref{escn} we obtain the following Corollary.
\begin{corollary}
Let $X,Y\in \CWBB$ such that $[X,Y]^B_B = \{\ast\}~~(\text{ or } [Y,X]^B_B = \{\ast\})$, then  $$\N(\XBY) = \max \big\{\N(X),\N(Y)\big\}.$$
\end{corollary}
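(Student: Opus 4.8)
The plan is to prove this Corollary by combining the two results exactly as the statement suggests, so the work is almost entirely in checking that the hypotheses of one feed into the hypotheses of the other. Specifically, Proposition \ref{rsem}(a) asserts that if $[X,Y]^B_B = \{\ast\}$ or $[Y,X]^B_B = \{\ast\}$, then every map $f \in \A^k_{\#}(\XBY)$ is $k$-reducible. Theorem \ref{escn} asserts that if, for every $k \geq 0$, each map of $\A^k_{\#}(\XBY)$ is $k$-reducible, then $\N(\XBY) = \max\{\N(X), \N(Y)\}$. So the natural route is to verify the universal quantifier: the $k$-reducibility conclusion of Proposition \ref{rsem}(a) must hold for \emph{all} non-negative integers $k$, not just one fixed $k$, in order to legitimately invoke Theorem \ref{escn}.

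First I would fix the hypothesis, say $[X,Y]^B_B = \{\ast\}$ (the case $[Y,X]^B_B = \{\ast\}$ being symmetric). The key observation is that the triviality hypothesis $[X,Y]^B_B = \{\ast\}$ is a condition on the spaces $X$ and $Y$ alone and does not reference $k$ in any way. Therefore Proposition \ref{rsem}(a) applies verbatim for each value of $k$, yielding that for every $k \geq 0$, each map $f \in \A^k_{\#}(\XBY)$ is $k$-reducible. This is precisely the blanket reducibility assumption required by Theorem \ref{escn}.

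Having established that the reducibility hypothesis of Theorem \ref{escn} is met for all $k$, I would then simply cite Theorem \ref{escn} to conclude $\N(\XBY) = \max\{\N(X), \N(Y)\}$. The symmetric case $[Y,X]^B_B = \{\ast\}$ follows identically, since Proposition \ref{rsem}(a) is stated with a disjunction and Theorem \ref{escn} treats $X$ and $Y$ on equal footing.

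There is essentially no main obstacle here; the Corollary is a direct concatenation of two previously proved results. The only point requiring the slightest care is the quantifier matching just noted: one must observe that the space-level triviality hypothesis is independent of $k$, so that the single-$k$ statement of Proposition \ref{rsem}(a) upgrades for free to the for-all-$k$ statement demanded by Theorem \ref{escn}. Everything else is a citation.
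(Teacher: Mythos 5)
Your proposal is correct and matches the paper exactly: the paper derives this corollary by "Combining Proposition \ref{rsem}(a) and Theorem \ref{escn}," which is precisely your concatenation, including the observation that the hypothesis is independent of $k$ so the reducibility holds for all $k$.
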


\begin{example}
	
Let $(X,p,s)$ and $(B\times K(G,n),q,t)\in \CWBB$ are two ex-spaces over $B$, where $q\colon B\times K(G,n)\to B$ is first coordinate projection. Observe that  $X\times_B(B\times K(G,n))\cong X\times K(G,n)$. Therefore $\N\big(X\times_B (B\times K(G,n))\big) = \N\big(X\times K(G,n)\big)$. Moreover, if $\pi_i(X) = 0$ for all $i>n$ then $\pi_i\big(X\times K(G,n)\big) = 0$ for all $i>n$.
Therefore $\A_{\#}^n(X\times K(G,n)) = \AutB(X\times K(G,n)$. Hence, 
$$\N\big(X\times_B (B\times K(G,n))\big) = \N\big(X\times K(G,n)\big) \leq n.$$
	
\end{example}

\begin{example}\label{exam2sc}
	
Consider two ex-spaces $\big(K(G,m)\times B,p,s\big)$ and $\big(M(G,n)\times B,q,t\big)$ over $B$, where $G$ is an abelian group with $n\geq 3$ and $m<n$. Observe that self-maps of $K(G,m)\times M(G,n)$ are reducible (see \cite[Corollary 2.2]{pshp}). Therefore each self-map of $\big(K(G,m)\times B\big)\times_B\big(M(G,n)\times B\big)$ is reducible using Corollary \ref{kred}. Moreover $\big(K(G,m)\times B\big)\times_B\big(M(G,n)\times B\big)\cong K(G,m)\times M(G,n)\times B$. From Theorem \ref{escn}, we have  
\begin{align*}
\N\big(K(G,m)\times M(G,n)\times B\big) = &~\N\big(\big(K(G,m)\times B\big)\times_B\big(M(G,n)\times B\big)\big)\\
=&~ \max\big\{\N(K(G,m)\times B),\N(M(G,n)\times B)\big\}.
\end{align*}
From Lemma \ref{rbtsn} we have $\N\big(K(G,m)\times B\big) = \NA\big(K(G,m)\big) = m$. Similarly  
$\N\big(M(G,n)\times B \big) = \NA\big(M(G,n)\big) = n$, (see \cite[Corollary 3]{clcngh}). Consequently $$\N\big(K(G,m)\times M(G,n)\times B\big) = n.$$
\end{example}

\sect{Self-length}\label{sslength}

 In the previous section we studied self-closeness number, which is the minimum value of $n$ such that $\A^n_{\#}(X) = \AutB(X)$. Here we calculate the number of strict inclusions in the chain of Equation \ref{eqchain}, which was studied by Choi and Lee in \cite{clslp}.

\begin{definition}
The number of strict inclusions in the chain of Equation \ref{eqchain} is called the \emph{self-length} of $X$ and is denoted by $\L(X)$.
\end{definition}

We define a set $I_{(X)}$ of non-negative integers by, $$I_{(X)} := \big\{n\in \NN\cup \{0\}~~:~~ \A^{n+1}_{\#}(X)\subsetneq \A^n_{\#}(X)\big\}.$$
Clearly the cardinality of the set $$\# I_{(X)} = \L(X).$$

We now explore the relation between the self closeness number and the self length. 
\begin{proposition}\label{rbslsc}
For any $X\in \CWBB$, we have $\L(X)\leq \N(X)$. In particular, self-length of $X$ is finite if and only if $X$ has finite self-closeness number.
\end{proposition}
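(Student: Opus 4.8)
The plan is to show that strict inclusions in the chain of Equation \ref{eqchain} can occur only below the level $\N(X)$, and then to read off both the inequality and the finiteness equivalence from this. First I would record the elementary monotonicity consequence of the definition of $\N(X)$: if $m \geq \N(X)$, then $\AutB(X) \subseteq \A^{m}_{\#}(X) \subseteq \A^{\N(X)}_{\#}(X) = \AutB(X)$, so $\A^m_{\#}(X) = \AutB(X)$. Consequently, for every $n \geq \N(X)$ both $\A^n_{\#}(X)$ and $\A^{n+1}_{\#}(X)$ equal $\AutB(X)$, whence the inclusion $\A^{n+1}_{\#}(X)\subseteq \A^n_{\#}(X)$ is an equality and $n \notin I_{(X)}$. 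This gives $I_{(X)}\subseteq \{0,1,\dots,\N(X)-1\}$ whenever $\N(X)$ is finite, and taking cardinalities yields $\L(X) = \#I_{(X)} \leq \N(X)$; when $\N(X) = \infty$ the inequality is vacuous.

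For the finiteness equivalence, one direction is immediate, since $\N(X) < \infty$ forces $\L(X) \leq \N(X) < \infty$. For the converse I would argue by stabilization of the chain. If $\L(X) = \#I_{(X)} < \infty$, then $I_{(X)}$ is bounded, so there is an integer $N$ with $\A^{n+1}_{\#}(X) = \A^n_{\#}(X)$ for all $n \geq N$; that is, the decreasing chain is constant beyond $N$, and therefore $\A^N_{\#}(X) = \bigcap_{k\geq 0}\A^k_{\#}(X)$.

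The key step, and the point at which the topology genuinely enters, is to identify this intersection with $\AutB(X)$. Here I would invoke the fact recorded after Equation \ref{eqchain} that $\lim_{k\to\infty}\A^k_{\#}(X) = \AutB(X)$: an $f$ lying in every $\A^k_{\#}(X)$ induces isomorphisms on all homotopy groups, hence is a homotopy equivalence by Whitehead's theorem, and being a fibrewise pointed map whose projection is a fibration it is in fact a fibrewise homotopy equivalence, so $f \in \AutB(X)$; the reverse inclusion $\AutB(X)\subseteq \bigcap_k \A^k_{\#}(X)$ is clear. Thus $\A^N_{\#}(X) = \AutB(X)$, which shows $\N(X) \leq N < \infty$ and completes the equivalence. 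The main obstacle is precisely this identification of the stabilized chain with $\AutB(X)$, as it is where Whitehead's theorem and the passage from a homotopy equivalence to a fibrewise homotopy equivalence are needed; the counting argument underlying the inequality $\L(X) \leq \N(X)$ is otherwise purely formal.
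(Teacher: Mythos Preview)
Your argument is correct and follows essentially the same route as the paper: both proofs rest on the observation that strict inclusions in the chain can occur only below the stabilization level, together with the identification of the stable value with $\AutB(X)$ via Whitehead and Dold. The paper organizes the case analysis around $\L(X)<\infty$ rather than $\N(X)<\infty$ and is somewhat terser, but the underlying ideas are the same.
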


\begin{proof}
Consider the case $\L(X)<\infty$. Let $\L(X) = m$. Then we must have $$\AutB(X) = \cdots = \A^{k}_{\#}(X)\subsetneq \A^{k-1}_{\#}(X) \subseteq \cdots \subseteq \A^{n}_{\#}(X)\subseteq \cdots \subseteq \A^0_{\#}(X) = [X,X]^B_B,$$ for some non-negative integer $k$. Clearly $k\geq m$, otherwise $\L(X)<m$. By the definition of self-closeness number, we have $\N(X) = k$. Thus we get the desired result.

\end{proof}

\begin{corollary}
\begin{enumerate}[(a)]
\item For any finite dimensional $X\in \CWBB$, we have $\L(X)< \infty$.
\item For any $X\in \CWBB$, if $\N(X) = n$ and $X$ is $m$-connected with $m<n$, then $1\leq \L(X)\leq n-m$. 
\end{enumerate}
\end{corollary}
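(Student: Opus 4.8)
The plan is to read off both parts directly from the definitions, using Proposition \ref{rbslsc} and the two-sided collapse of the chain in Equation \ref{eqchain}. For part (a), I would first note that finite-dimensionality of $X$ forces $\A^{\dim X}_{\#}(X) = \AutB(X)$, as recorded before the definition of $\N$; hence $\N(X) \leq \dim X < \infty$. Proposition \ref{rbslsc} then gives $\L(X) \leq \N(X) < \infty$ at once, so nothing further is needed.

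For the upper bound in part (b), the strategy is to determine precisely which non-negative integers can possibly lie in $I_{(X)}$. Since $X$ is $m$-connected, $\pi_i(X) = 0$ for all $i \leq m$, so every fibrewise pointed self-map of $X$ induces an isomorphism on $\pi_i$ for $0 \leq i \leq m$ (vacuously); consequently $\A^0_{\#}(X) = \cdots = \A^m_{\#}(X) = [X,X]^B_B$, and no index in $\{0,1,\ldots,m-1\}$ belongs to $I_{(X)}$. At the other end, the hypothesis $\N(X) = n$ gives $\A^k_{\#}(X) = \AutB(X)$ for every $k \geq n$, so no index $\geq n$ belongs to $I_{(X)}$ either. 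Combining the two truncations yields $I_{(X)} \subseteq \{m, m+1, \ldots, n-1\}$, a set of cardinality $n-m$, whence $\L(X) = \# I_{(X)} \leq n-m$.

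For the lower bound I would invoke the minimality built into the definition of $\N(X)$. Since $n$ is the least integer with $\A^n_{\#}(X) = \AutB(X)$, we have $\A^{n-1}_{\#}(X) \neq \AutB(X) = \A^n_{\#}(X)$; as the chain of Equation \ref{eqchain} is decreasing, this inclusion is strict, i.e. $n-1 \in I_{(X)}$. The hypothesis $m < n$ guarantees that $n-1$ is a genuine non-negative integer, so $I_{(X)} \neq \emptyset$ and therefore $\L(X) \geq 1$.

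There is no substantial obstacle here; the only point demanding care is the index bookkeeping, namely that $m$-connectedness collapses the initial segment of the chain up to and including $\A^m_{\#}(X)$, while $\N(X) = n$ collapses the tail from $\A^n_{\#}(X)$ onward, leaving exactly the $n-m$ candidate strict steps between indices $m$ and $n-1$, at least one of which (the step at $n-1$) is forced to be strict by minimality of $\N(X)$.
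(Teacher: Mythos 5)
Your proposal is correct and follows essentially the same route as the paper: part (a) is the observation $\L(X)\leq \N(X)\leq \dim X$ via Proposition \ref{rbslsc}, and part (b) collapses the chain of Equation \ref{eqchain} below index $m$ by $m$-connectedness and above index $n$ by $\N(X)=n$, leaving at most $n-m$ candidate strict inclusions, with the step at index $n-1$ forced to be strict by minimality of $\N(X)$. Your phrasing in terms of locating $I_{(X)}$ inside $\{m,\ldots,n-1\}$ is just a slightly more explicit bookkeeping of the same argument.
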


\begin{proof}
(a) If $\dim(X) = n$, then $\N(X)\leq n$. Therefore $\L(X)\leq \N(X)\leq n$.

(b) If $X$ is $m$-connected, then $\pi_i(X) = 0$ for all $i\leq m$. Thus observed that $\A^m_{\#}(X) = \A^{m-1}_{\#}(X) = \cdots =\A^0_{\#}(X)= [X,X]^B_B$. Since $\N(X) = n$, therefore we have the chain $$\AutB(X) = \A^n_{\#}(X)\subsetneq \A^{n-1}_{\#}(X)\subseteq \cdots \subseteq \A^m_{\#}(X) = [X,X]^B_B.$$ Therefore there are at most $n-m$ strict inclusions. Hence $1\leq \L(X)\leq n-m$.

\end{proof}

As for self closeness number, the self-length is also homotopy invariant. 

\begin{theorem}
Assume that $X,Y\in \CWBB$ are fibrewise pointed homotopy equivalent. Then $\L(X) = \L(Y)$. 
\end{theorem}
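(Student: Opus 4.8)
I must show that if $X, Y \in \CWBB$ are fibrewise pointed homotopy equivalent, then $\L(X) = \L(Y)$.

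The plan is to reduce the statement about self-length to a statement about the entire chain of monoids, and then transport that chain across the homotopy equivalence using the isomorphisms already constructed in the proof of Proposition~\ref{eqscn}. Recall that $\L(X) = \#I_{(X)}$, where $I_{(X)}$ records exactly those indices $n$ at which the inclusion $\A^{n+1}_{\#}(X) \subseteq \A^n_{\#}(X)$ is strict. Thus it suffices to prove the stronger fact that the homotopy equivalence induces, for every $k \geq 0$, a monoid isomorphism $\A^k_{\#}(X) \cong \A^k_{\#}(Y)$, and that these isomorphisms are compatible with the inclusions in the chain of Equation~\ref{eqchain}. Compatibility will force $n \in I_{(X)}$ if and only if $n \in I_{(Y)}$, whence $I_{(X)} = I_{(Y)}$ and in particular $\#I_{(X)} = \#I_{(Y)}$, giving $\L(X) = \L(Y)$.

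The key steps, in order, are as follows. First I would let $\phi\colon X \to Y$ be a fibrewise pointed homotopy equivalence with fibrewise homotopy inverse $\psi\colon Y \to X$, exactly as in Proposition~\ref{eqscn}. Second, I would invoke the monoid isomorphism $\phi_*\colon \A^k_{\#}(X) \to \A^k_{\#}(Y)$, $f \mapsto \phi \circ f \circ \psi$, with inverse $\psi_*$, which was already established in the proof of Proposition~\ref{eqscn} for each fixed $k$. The crucial new observation is that the \emph{same} pair of maps $\phi, \psi$ defines the isomorphism $\phi_*$ for \emph{every} $k$ simultaneously; hence the square
\[
\begin{array}{ccc}
\A^{k+1}_{\#}(X) & \hookrightarrow & \A^{k}_{\#}(X)\\[4pt]
\phi_* \downarrow & & \downarrow \phi_*\\[4pt]
\A^{k+1}_{\#}(Y) & \hookrightarrow & \A^{k}_{\#}(Y)
\end{array}
\]
commutes on the nose (both composites send $f$ to $\phi \circ f \circ \psi$). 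Third, since $\phi_*$ is a bijection in each row, the top inclusion is an equality precisely when the bottom one is; equivalently, $\A^{k+1}_{\#}(X) \subsetneq \A^{k}_{\#}(X)$ if and only if $\A^{k+1}_{\#}(Y) \subsetneq \A^{k}_{\#}(Y)$. This is exactly the statement $k \in I_{(X)} \iff k \in I_{(Y)}$.

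Finally I would conclude $I_{(X)} = I_{(Y)}$, hence $\L(X) = \#I_{(X)} = \#I_{(Y)} = \L(Y)$, covering uniformly the finite and infinite cases. I do not expect any genuine obstacle here: the essential content, that a fibrewise pointed equivalence induces compatible monoid isomorphisms at every level, is already packaged in Proposition~\ref{eqscn}, and the only addition is the routine verification that these isomorphisms respect the chain inclusions. The one point requiring a word of care is well-definedness of $\phi_*$ at each level, namely that $f \in \A^k_{\#}(X)$ implies $\phi \circ f \circ \psi \in \A^k_{\#}(Y)$; but this follows immediately because $\pi_i(\phi), \pi_i(\psi)$ are mutually inverse isomorphisms for all $i$, so $\pi_i(\phi \circ f \circ \psi) = \pi_i(\phi)\,\pi_i(f)\,\pi_i(\psi)$ is an isomorphism for $0 \leq i \leq k$ whenever $\pi_i(f)$ is. Thus the whole argument is a clean transport of structure, and the self-length, being an isomorphism invariant of the filtered monoid, is preserved.
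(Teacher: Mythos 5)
Your proposal is correct and follows essentially the same route as the paper: both arguments transport the chain through the level-wise monoid isomorphisms $\phi_*$ of Proposition \ref{eqscn} and use the commuting square with the inclusions to conclude that $\A^{k+1}_{\#}(X)\subsetneq \A^{k}_{\#}(X)$ if and only if $\A^{k+1}_{\#}(Y)\subsetneq \A^{k}_{\#}(Y)$. Your explicit remark that the same $\phi,\psi$ work for all $k$ simultaneously, yielding $I_{(X)}=I_{(Y)}$, is exactly the content of the paper's commutative diagram.
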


\begin{proof}
It is sufficient to show that for any non-negative integer $k,~~\A^{k+1}_{\#}(X)\subsetneq \A^k_{\#}(X)$ if and only if $\A^{k+1}_{\#}(Y)\subsetneq \A^k_{\#}(Y)$ which is equivalent to show that $\A^{k+1}_{\#}(X) = \A^k_{\#}(X)$ if and only if $\A^{k+1}_{\#}(Y) = \A^k_{\#}(Y)$. As in the proof of Proposition \ref{eqscn}, we have monoid isomorphism $\phi_*\colon \A^k_{\#}(X)\to \A^k_{\#}(Y)$ with inverse $\psi_*\colon \A^k_{\#}(Y)\to \A^k_{\#}(X)$. Consider the commutative diagram 

$$
\xymatrix{
	\A^{k+1}_{\#}(X) \ar[dd]_{\phi_*}  \ar[rr]^{\subseteq}  &&  \A^k_{\#}(X) \ar[dd]^{\phi_*} \\\\
	\A^{k+1}_{\#}(Y) \ar[rr]^{\subseteq} &&  \A^k_{\#}(Y)
}
$$
From the commutativity we say that the above horizontal map is identity if and only if the lower horizontal map is identity. Therefore $\A^{k+1}_{\#}(X) = \A^k_{\#}(X)$ if and only if $\A^{k+1}_{\#}(Y) = \A^k_{\#}(Y)$.

\end{proof}

Under certain conditions, the following proposition gives an equivalent criterion for a strict inequality of the chain (see Equation \ref{eqchain}) for the case of fibred product. 

\begin{proposition}\label{slps}
Let $X,Y\in \CWBB$. For any non-negative integer $k$, we have
\begin{enumerate}[(a)]
\item If $\A^{k+1}_{\#}(X)\subsetneq \A^k_{\#}(X)$  (or $\A^{k+1}_{\#}(Y)\subsetneq \A^k_{\#}(Y)$) then $\A^{k+1}_{\#}(\XBY)\subsetneq \A^k_{\#}(\XBY)$.
\item The converse holds if $Y$ is a fibrewise group-like space that satisfies the conditions $[X\wedge_B Y,Y]^B_B = \{\ast\}$ and $[X,Y]^B_B = \{\ast\}$. 
\end{enumerate}
\end{proposition}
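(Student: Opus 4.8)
The plan is to prove both directions by relating strict inclusions in the chain for $\XBY$ to those of the factors, using the structural results already established.

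For part (a), I would argue by contraposition: suppose $\A^{k+1}_{\#}(\XBY) = \A^k_{\#}(\XBY)$ and deduce that $\A^{k+1}_{\#}(X) = \A^k_{\#}(X)$ and $\A^{k+1}_{\#}(Y) = \A^k_{\#}(Y)$. The key tool is the splitting $\psi_X\colon \A^k_{\#}(X)\to \A^k_{\#,Y}(\XBY)$, $h\mapsto (h\circ p_X, p_Y)$, from Theorem \ref{ftsg}, together with the homomorphism $\phi_X$ satisfying $\phi_X\circ\psi_X = \Id$. First I would take any $h\in \A^k_{\#}(X)$ and push it into $\A^k_{\#,Y}(\XBY)\subseteq \A^k_{\#}(\XBY) = \A^{k+1}_{\#}(\XBY)$ via $\psi_X$. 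The point is then to check that $\psi_X(h)$ landing in $\A^{k+1}_{\#}(\XBY)$ forces $h\in\A^{k+1}_{\#}(X)$; since $\phi_X$ restricts to all levels and $\phi_X(\psi_X(h)) = h$, applying $\phi_X$ at level $k+1$ recovers $h$ as an element of $\A^{k+1}_{\#}(X)$. The symmetric argument with $\psi_Y$ handles $Y$. This shows the contrapositive of (a) in both the $X$ and $Y$ versions.

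For the converse in part (b), I would invoke Theorem \ref{hbthm}: under the stated hypotheses on $Y$ (fibrewise group-like, $[X\wedge_B Y,Y]^B_B = \{\ast\}$, $[X,Y]^B_B = \{\ast\}$), there is a split short exact sequence $0\to \A^{k,X}_{\#,Y}(\XBY)\to \A^k_{\#}(\XBY)\xrightarrow{\phi}\A^k_{\#}(X)\times\A^k_{\#}(Y)\to 0$, with $\phi(\fX,\fY) = (\fX\circ\iota_X, \fY\circ\iota_Y)$. The crucial observation is that under these hypotheses each map is $k$-reducible (by Proposition \ref{rsem}(a) with $[X,Y]^B_B = \{\ast\}$), so $\A^{k,X}_{\#,Y}(\XBY) = \{\ast\}$ as computed in the proof of Theorem \ref{hbthm}, making $\phi$ a monoid isomorphism $\A^k_{\#}(\XBY)\cong \A^k_{\#}(X)\times\A^k_{\#}(Y)$ at every level. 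Then the chain for $\XBY$ is, level by level, the product of the chains for $X$ and $Y$, so a strict inclusion $\A^{k+1}_{\#}(\XBY)\subsetneq \A^k_{\#}(\XBY)$ corresponds to $\A^{k+1}_{\#}(X)\times\A^{k+1}_{\#}(Y)\subsetneq \A^k_{\#}(X)\times\A^k_{\#}(Y)$, which can only happen if at least one factor inclusion is strict. This yields $\A^{k+1}_{\#}(X)\subsetneq \A^k_{\#}(X)$ or $\A^{k+1}_{\#}(Y)\subsetneq \A^k_{\#}(Y)$, exactly the converse of (a).

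The main obstacle I anticipate is ensuring that the isomorphisms and the maps $\phi_X,\psi_X,\phi$ are genuinely compatible across adjacent levels of the chain, i.e.\ that restricting to level $k+1$ commutes with the inclusion into level $k$ in the sense needed. Since $\A^{k+1}_{\#}(\XBY)\subseteq \A^k_{\#}(\XBY)$ and the splitting maps are defined by the same formulas independent of $k$, the maps respect the inclusions, so the naturality is essentially automatic; nonetheless I would state a small commutative square (analogous to the one appearing in the homotopy-invariance proof of self-length) to make the level-compatibility explicit before concluding. The asymmetry of the hypotheses in (b) — group-like conditions are imposed only on $Y$ — is harmless because the product decomposition from Theorem \ref{hbthm} is symmetric in the two factors once established, so no separate treatment of $X$ is required.
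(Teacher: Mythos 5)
Your part (a) is correct and amounts to the same thing as the paper's argument in different clothing: pushing $h\in\A^k_{\#}(X)$ into the product as $\psi_X(h)=(h\circ p_X,p_Y)$ and recovering $h=h\circ p_X\circ\iota_X$ at level $k+1$ is just Proposition \ref{mhom} applied twice, whereas the paper restricts $f\times_B g$ to a fibre and invokes Proposition \ref{iprop}; both are sound and the level-compatibility you worry about is indeed automatic.

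Part (b), however, has a genuine gap. You assert that under the hypotheses of (b) the map $\phi$ of Theorem \ref{hbthm} is a monoid isomorphism onto $\A^k_{\#}(X)\times\A^k_{\#}(Y)$ because ``$\A^{k,X}_{\#,Y}(\XBY)=\{\ast\}$''. That is not what Theorem \ref{hbthm} provides. After interchanging the roles of $X$ and $Y$ so that the group-like hypotheses sit on $Y$ as in (b), the submonoid that collapses to a point is $\A^{k,Y}_{\#,X}(\XBY)$ --- the one controlled by $[X,Y]^B_B=\{\ast\}$ and $[X\wedge_B Y,Y]^B_B=\{\ast\}$ via Corollary \ref{cftsg} --- while the kernel of $\phi$ is identified with the \emph{other} submonoid, consisting of maps $(f_X,p_Y)$ with $f_X\circ\iota_X=\Id_X$. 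Nothing in the hypotheses of (b) controls $[Y,X]^B_B$ or $[X\wedge_B Y,X]^B_B$, so this kernel is in general nontrivial and $\phi$ is not injective (and for monoids even a trivial kernel would not give injectivity). Hence the chain for $\XBY$ is not ``level by level the product of the chains,'' and the implication $\A^{k+1}_{\#}(X)=\A^k_{\#}(X)$ and $\A^{k+1}_{\#}(Y)=\A^k_{\#}(Y)$ $\Rightarrow$ $\A^{k+1}_{\#}(\XBY)=\A^k_{\#}(\XBY)$ does not follow from the exact sequence alone. The missing ingredient, which is exactly what the paper uses, is the multiplicative decomposition $h=(p_X,h_Y)\circ(h_X,p_Y)$ furnished by the second half of Theorem \ref{ftsg} together with Corollary \ref{cftsg}: by $k$-reducibility and Proposition \ref{mhom} each factor lies in $\A^{k+1}_{\#}(\XBY)$ once $h_X\circ\iota_X\in\A^{k+1}_{\#}(X)$ and $h_Y\circ\iota_Y\in\A^{k+1}_{\#}(Y)$, and therefore so does $h$, giving the desired contradiction. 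Your framework can be repaired by inserting this factorization, but the short exact sequence by itself does not determine $\A^k_{\#}(\XBY)$ from its quotient.
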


\begin{proof}
(a)
Let $\A^{k+1}_{\#}(X)\subsetneq \A^k_{\#}(X)$. If possible, assume that $\A^{k+1}_{\#}(\XBY) = \A^k_{\#}(\XBY)$. Choose $f\in \A^k_{\#}(X) \setminus \A^{k+1}_{\#}(X)$. Then for any $g\in \A^k_{\#}(Y)$ we have 
$$f\times_B g\in \A^k_{\#}(\XBY) = \A^{k+1}_{\#}(\XBY).$$
 Then $f_b\times g_b = (f\times_B g)_b\in \A^{k+1}_{\#}(X_b\times Y_b)$ for some $b\in B$, from Proposition \ref{iprop}. Thus $\pi_i(f_b\times g_b)\colon \pi_i(X_b\times Y_b)\to \pi_i(X_b\times Y_b)$ is an isomorphism for all $i\leq k+1$. So $\pi_i(f_b)\colon \pi_i(X_b)\to \pi_i(X_b)$ is an isomorphism for all $i\leq k+1$. Hence $f_b\in \A^{k+1}_{\#}(X_b)$, so that $f\in \A^{k+1}_{\#}(X)$ from Proposition \ref{iprop}. This is a contradiction. We will get the similar result if we take $\A^{k+1}_{\#}(Y)\subsetneq \A^k_{\#}(Y)$.

(b)
For the converse implication, it is given that $\A^{k+1}_{\#}(\XBY)\subsetneq \A^k_{\#}(\XBY)$. If possible assume that $\A^{k+1}_{\#}(X) = \A^k_{\#}(X)$ and $\A^{k+1}_{\#}(Y) = \A^k_{\#}(Y)$. Since $\A^{k+1}_{\#}(\XBY)\subsetneq \A^k_{\#}(\XBY)$, let $h = (h_X,h_Y)\in \A^k_{\#}(\XBY) \setminus \A^{k+1}_{\#}(\XBY)$. From Corollary \ref{ckredu} and Proposition \ref{rsem}(a) we have $h_X\circ \iota_X\in \A^k_{\#}(X)$ and  $h_Y\circ \iota_Y\in \A^k_{\#}(Y)$. Thus $$h_X\circ \iota_X\in \A^{k+1}_{\#}(X) = \A^k_{\#}(X),~~h_Y\circ \iota_Y\in \A^{k+1}_{\#}(Y) = \A^k_{\#}(Y).$$ Therefore $(p_X,h_Y)$ and $(h_X,p_Y)\in \A^{k+1}_{\#}(\XBY)$ from Proposition \ref{mhom}. Combining Theorem \ref{ftsg} and Corollary \ref{cftsg} we have $h = (h_X,h_Y) = (p_X,h_Y)\circ (h_X,p_Y)\in \A^{k+1}_{\#}(\XBY)$, which is a contradiction. Consequently at least one of $\A^{k+1}_{\#}(X)\subsetneq \A^k_{\#}(X)$ and $\A^{k+1}_{\#}(Y)\subsetneq \A^k_{\#}(Y)$ holds.

\end{proof}

We use the above proposition to deduce a relation of self-length of fibred product in terms of the individual spaces.

\begin{theorem}\label{eslps}
Let $X,Y\in \CWBB$. Then
$$\L(X) + \L(Y) -  \#\big(I_{(X)}\cap I_{(Y)}\big)\leq \L(\XBY)$$
Moreover the equality holds, if $Y$ is a fibrewise group-like space that satisfies the conditions $[X\wedge_B Y,Y]^B_B = \{\ast\}$ and $[X,Y]^B_B = \{\ast\}$.
\end{theorem}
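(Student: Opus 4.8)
The plan is to translate the statement about self-length into a counting statement about the indicator sets $I_{(X)}$, $I_{(Y)}$, and $I_{(\XBY)}$, since $\L(\cdot) = \#I_{(\cdot)}$. The inequality will follow from a set-theoretic containment, while the equality will be an ``if and only if'' refinement supplied by Proposition \ref{slps}. First I would recall that $\L(X) = \#I_{(X)}$, $\L(Y) = \#I_{(Y)}$, and $\L(\XBY) = \#I_{(\XBY)}$, so the claimed inequality is equivalent to
$$\#I_{(X)} + \#I_{(Y)} - \#\big(I_{(X)}\cap I_{(Y)}\big) \leq \#I_{(\XBY)}.$$
By inclusion--exclusion the left-hand side is exactly $\#\big(I_{(X)}\cup I_{(Y)}\big)$, so the whole assertion reduces to proving the containment $I_{(X)}\cup I_{(Y)}\subseteq I_{(\XBY)}$, together with the reverse containment under the group-like hypothesis.

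For the inequality I would argue the forward containment directly from Proposition \ref{slps}(a). If $k\in I_{(X)}$ then by definition $\A^{k+1}_{\#}(X)\subsetneq \A^k_{\#}(X)$, and Proposition \ref{slps}(a) gives $\A^{k+1}_{\#}(\XBY)\subsetneq \A^k_{\#}(\XBY)$, i.e. $k\in I_{(\XBY)}$; symmetrically $k\in I_{(Y)}$ forces $k\in I_{(\XBY)}$. Hence $I_{(X)}\cup I_{(Y)}\subseteq I_{(\XBY)}$, and taking cardinalities yields the stated inequality after applying inclusion--exclusion. The only subtlety is that these sets may be infinite; but in that case one can observe that if $\L(X)$ or $\L(Y)$ is infinite then, by the containment, $\L(\XBY)$ is infinite as well and the inequality holds trivially, so it suffices to treat the finite case where inclusion--exclusion applies without issue.

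For the equality, the hypotheses on $Y$ are precisely those needed to invoke the converse Proposition \ref{slps}(b): under these conditions, $k\in I_{(\XBY)}$ implies that at least one of $\A^{k+1}_{\#}(X)\subsetneq \A^k_{\#}(X)$ or $\A^{k+1}_{\#}(Y)\subsetneq \A^k_{\#}(Y)$ holds, i.e. $k\in I_{(X)}\cup I_{(Y)}$. This gives the reverse containment $I_{(\XBY)}\subseteq I_{(X)}\cup I_{(Y)}$, so together with part (a) we obtain the exact equality $I_{(\XBY)} = I_{(X)}\cup I_{(Y)}$, and counting via inclusion--exclusion produces equality in the displayed formula. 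I expect the main obstacle to be purely bookkeeping rather than conceptual: ensuring the inclusion--exclusion and cardinality manipulations remain valid when the self-lengths are not known in advance to be finite, and checking that the application of Proposition \ref{slps}(b) really covers every index $k$ appearing in $I_{(\XBY)}$. Once the reduction to $I_{(\XBY)} = I_{(X)}\cup I_{(Y)}$ is in place, nothing more than Proposition \ref{slps} in both directions is required.
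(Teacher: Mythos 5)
Your proposal is correct and follows essentially the same route as the paper: both reduce the claim via inclusion--exclusion to the containment $I_{(X)}\cup I_{(Y)}\subseteq I_{(\XBY)}$ (obtained from Proposition \ref{slps}(a)) for the inequality, and to the equality $I_{(\XBY)} = I_{(X)}\cup I_{(Y)}$ (via Proposition \ref{slps}(b)) for the equality case, with the same reduction to finite self-lengths. Nothing further is needed.
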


\begin{proof}
Note that if $\L(X) (\text{ or } \L(Y))$ is infinite then $\L(\XBY)$ is  infinite. So we have to only consider the case $\L(X)<\infty \text{ and } \L(Y)<\infty$.  Observe that $k\in I_{(X)} \cup I_{(Y)}$ implies $\A^{k+1}_{\#}(\XBY)\subsetneq \A^k_{\#}(\XBY)$ from Proposition \ref{slps}(a). Therefore the number of strict inclusions of the chain $\AutB(\XBY) \subseteq \cdots \subseteq \A^n_{\#}(\XBY)\subseteq \cdots \subseteq \A^0_{\#}(\XBY) = [\XBY,\XBY]^B_B$ is at least the cardinality of the set $I_{(X)}\cup I_{(Y)}$. Note that, $$\#\big(I_{(X)}\cup I_{(Y)}\big) = \#I_{(X)}  + \# I_{(X)} - \#\big(I_{(X)}\cap I_{(Y)}\big) = \L(X) + \L(Y) - \#\big(I_{(X)}\cap I_{(Y)}\big).$$

For the last implication of the proof, see that $\A^{k+1}_{\#}(\XBY)\subsetneq \A^k_{\#}(\XBY)$ if and only if $k\in I_{(X)} \cup I_{(Y)}$, from Proposition \ref{slps}(b).  Therefore the number of strict inclusions of the chain $\AutB(\XBY) \subseteq \cdots \subseteq \A^n_{\#}(\XBY)\subseteq \cdots \subseteq \A^0_{\#}(\XBY) = [\XBY,\XBY]^B_B$ is exactly the cardinality of  $I_{(X)}\cup I_{(Y)}$.

\end{proof}

\begin{example}\label{exlkm}
Let $\big(M(G,n)\times K(G,r),p,s\big)$ be an ex-space over $M(G,n)$, where $p\colon M(G,n)\times K(G,r)\to M(G,n)$ is a first coordinate projection. Assume that $r\leq n$.  
From Lemma \ref{rbtsn} we know that $\N\big(M(G,n)\times K(G,r)\big) = \NA\big(K(G,r)\big) = r$. Therefore we have $\A^r_{\#}\big(M(G,n)\times K(G,r)\big) = \AutB\big(M(G,n)\times K(G,r)\big)$. Moreover observe that for all $k<r$, $\pi_k\big(M(G,n)\times K(G,r)\big) = 0$ . Therefore $\A^k_{\#}\big(M(G,n)\times K(G,r)\big) = [M(G,n)\times K(G,r),M(G,n)\times K(G,r)]^{M(G,n)}_{M(G,n)}$ for all $k<r$. Hence we get the strict inclusions 
\begin{myeq}\label{eq4}
\begin{split}
\AutB\big(M(G,n)\times K(G,r)\big)&  =  \A^r_{\#}\big(M(G,n)\times K(G,r)\big) \\
&\subsetneq [M(G,n)\times K(G,r),M(G,n)\times K(G,r)]^{M(G,n)}_{M(G,n)}
\end{split}
\end{myeq}
Hence $\L\big(M(G,n)\times K(G,r)\big) = 1$.
\end{example}

\begin{example}
Let $(X,p,s)$ and $(B\times K(G,m),q,t)$ are two ex-spaces over $B$, where $q\colon B\times K(G,m)\to B$ is a first coordinate projection. Observe that $X\times_B(B\times K(G,m))\cong X\times K(G,m).$ Therefore from Theorem \ref{eslps},
\begin{align*}
\L\big(X\times K(G,m)\big) = &~\L\big(X\times_B(B\times K(G,m)\big)\\
\geq &~\L(X) + \L\big(B\times K(G,m)\big) - {\#}\big(I_X\cap I_{B\times K(G,m)}\big). 
\end{align*}
In particular take $X = M(G,n)\times K(G,r)$ and $B = M(G,n)$, where $m\leq n$ and $r\leq n$ with $m\neq r$. Therefore,
\begin{align*}
\L\big(M(G,n)\times &K(G,r)\times K(G,m)\big) \geq \L\big(M(G,n)\times K(G,r)\big)~ + \\
&\L\big(M(G,n)\times K(G,m)\big) - {\#}\big(I_{M(G,n)\times K(G,r)}\cap I_{M(G,n)\times K(G,m)}\big).
\end{align*}
From Equation \ref{eq4} we have $I_{\big(M(G,n)\times K(G,r)\big)} = \{r-1\}$. Therefore $\L\big(M(G,n)\times K(G,r)\big) = {\#}I_{\big(M(G,n)\times K(G,r)\big)} = 1$.

Similarly we have $I_{\big(M(G,n)\times K(G,m)\big)} = \{m-1\}$ and $\L\big(M(G,n)\times K(G,m)\big) = {\#}I_{\big(M(G,n)\times K(G,m)\big)} = 1$.

\noindent Hence, $${\#}\Big(I_{\big(M(G,n)\times K(G,r)\big)}\cap I_{\big(M(G,n)\times K(G,m)\big)} \Big) = 0 .$$ So we get $$\L\big(M(G,n)\times K(G,r)\times K(G,m)\big) \geq 2.$$

\end{example}
\sect{Higher fibred products}\label{shfb}
In previous sections, we studied several results of the monoid $\A_{\#}^k$, self-closeness number and self length for fibred product of two ex-spaces. In this section we generalised the results for $n$-fold fibred product. 
Let $X_1,\ldots,X_n$ are in $\CWBB$, with fibrations $p_i\colon X_i\to B$ and sections $s_i\colon B\to X_i$ for all $1\leq i\leq n$.  The fibred product of these $n$-spaces is given by 
$$\nprod = X_1\times_B \cdots \times_B X_n =  \{(x_1,\ldots,x_n): p_i(x_i) =p_j(x_j)~~ \forall ~~1\leq i,j\leq n \}.$$ 
For the rest of the section, we will use $\nprod$ to denote the $n$-fold fibred product.
Clearly $\nprod$ is an object in $\CWBB$ where $\pprod \colon \nprod \to B$ is a fibration with section $(s_1,\ldots,s_n)\colon B \to \nprod$ defined as $(\pprod)(x_1,\ldots,x_n) = p_i(x_i)$ and $(s_1,\ldots,s_n)(b) = (s_1(b),\ldots,s_n(b))$. For any $1\leq i\leq n$, we have natural projection map $p_{X_i}: \nprod \to X_i$.

\begin{definition}
A map $f = (f_{X_1},\ldots,f_{X_n}) \in \A^k_{\#}(\nprod)$ is said to be $k$-reducible if $(p_{X_1},\ldots,f_{X_i},\ldots,p_{X_n})\in \A^k_{\#}(\nprod)$, for all $1\leq i\leq n$.
\end{definition}

The following is a generalised version of Proposition \ref{mhom}. 

\begin{proposition}\label{hrc}
Let $f = (f_{X_1},\ldots,f_{X_n})\in \mapbb(\nprod,\nprod)$. Then, for $1\leq i\leq n$, we have $$(p_{X_1},\ldots, f_{X_i},\ldots,p_{X_n})\in \A^k_{\#}(\nprod) \text{ if and only if } f_{X_i}\circ \iota_{X_i}\in \A^k_{\#}(X_i).$$

In particular, $$(p_{X_1},\ldots, f_{X_i},\ldots,p_{X_n})\in \AutB(\nprod) \text{ if and only if } f_{X_i}\circ \iota_{X_i}\in \AutB(X_i).$$
\end{proposition}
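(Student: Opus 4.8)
The plan is to reduce the $n$-fold statement to the already-proven two-variable case by grouping the factors. Fix an index $i$ with $1\leq i\leq n$. The key observation is that the map $(p_{X_1},\ldots,f_{X_i},\ldots,p_{X_n})$ acts by the identity projection on every coordinate except the $i$-th, so it behaves exactly like a map of the form $(f_{X_i},p_{Z})$ where $Z$ collects all the remaining factors. Concretely, I would write $\nprod \cong X_i \times_B Z$, where $Z = X_1\times_B\cdots\widehat{X_i}\cdots\times_B X_n$ is the fibred product of the other $n-1$ spaces, and let $p_Z\colon \nprod\to Z$ be the corresponding projection. Under this identification the map $(p_{X_1},\ldots,f_{X_i},\ldots,p_{X_n})$ becomes precisely $(f_{X_i},p_Z)$ in the sense of the two-variable formalism, and the section $\iota_{X_i}\colon X_i\to\nprod$ corresponds to the section $\iota_{X_i}\colon X_i\to X_i\times_B Z$ of Section \ref{stwoproduct}.

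Once this identification is in place, I would invoke Proposition \ref{mhom} directly: it states that $(f_{X_i},p_Z)\in\A^k_{\#}(X_i\times_B Z)$ if and only if $f_{X_i}\circ\iota_{X_i}\in\A^k_{\#}(X_i)$. This gives the desired equivalence
$$(p_{X_1},\ldots,f_{X_i},\ldots,p_{X_n})\in\A^k_{\#}(\nprod)\iff f_{X_i}\circ\iota_{X_i}\in\A^k_{\#}(X_i).$$
The ``in particular'' clause for $\AutB$ follows from the same identification together with the $\AutB$-part of Proposition \ref{mhom}, or alternatively by passing to the limit $k\to\infty$ as in the discussion preceding the definition of self-closeness number.

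The one point requiring care, and the main obstacle, is to verify that the identification $\nprod\cong X_i\times_B Z$ is genuinely an isomorphism of ex-spaces that is compatible with all the structure maps: it must carry the fibration $\pprod$ to $p\times_B q$, the section $(s_1,\ldots,s_n)$ to $(s_i,s_Z)$, the projection $p_{X_i}$ to the first projection, and most importantly the section $\iota_{X_i}$ of the $n$-fold product to the section $\iota_{X_i}$ of the two-fold product. This last compatibility is what allows Proposition \ref{mhom} to be applied verbatim; it amounts to checking that $\iota_{X_i}(x)=(s_1 p_i(x),\ldots,x,\ldots,s_n p_i(x))$ corresponds under the grouping to $(x, s_Z p_i(x))$, which is a routine but necessary unwinding of definitions. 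Granting this associativity of the fibred product and the matching of sections, the proof is otherwise immediate from the two-variable result.
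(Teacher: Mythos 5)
Your proposal is correct and follows essentially the same route as the paper: the paper's proof also regroups the factors via a permutation homeomorphism $\phi\colon\nprod\to\widetilde{X}\times_B X_i$ (placing $X_i$ last rather than first) and then applies Proposition \ref{mhom}, checking exactly the compatibility $f'_{X_i}\circ\iota'_{X_i}=f_{X_i}\circ\iota_{X_i}$ that you flag as the point requiring care.
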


\begin{proof}
Let $$\bar{{\bf X}} = X_1\times_B \ldots \times_B X_{i-1}\times_B X_{i+1}\ldots \times_B X_n\times_B X_i.$$ Consider permutation map $\phi:\nprod \to \bar{\bf X}$ defined as, $$(x_{1},\ldots,x_{n}) \mapsto (x_1,\ldots,x_{i-1},x_{i+1},\ldots,  x_n,x_i).$$ 
Clearly $\phi$ is a homeomorphism. 
Therefore $(p_{X_1},\ldots,f_{X_i},\ldots,p_{X_n})\in \A^k_{\#}(\nprod)$ if and only if $\phi \circ (p_{X_1},\ldots,f_{X_i},\ldots,p_{X_n})\circ \phi^{-1}$ =     
$(p'_{X_1},\ldots,p'_{X_{i-1}},p'_{X_{i+1}},\ldots, $ $p'_{X_n},f'_{X_i}) \in \A^k_{\#}(\bar{\bf X}),$
where $p'_{X_j}:\bar{\bf X}\to X_{j}$ are all projection maps for $j \in \{1,\ldots,i-1,i+1,\ldots,n\}$.

\mydiagram[\label{dig4}]{
	\nprod \ar[dd]_{(p_{X_1},\cdots, f_{X_i},\cdots,p_{X_n})}  \ar[rrrr]^{\phi}  &&&&  \bar{\bf X} \ar[dd]^{\phi \circ (p_{X_1},\cdots, f_{X_i},\cdots,p_{X_n})\circ \phi^{-1}} \\\\
	\nprod \ar[rrrr]_{\phi} &&&& \bar{\bf X}
}
We write $\bar{\bf X} = \widetilde{X}\times_B X_i$ where $$\widetilde{X}=X_1\times_B \ldots \times_B X_{i-1}\times_B X_{i+1}\times_B \ldots \times_B X_n. $$
\noindent Then $(p'_{\widetilde{X}},f'_{X_i}) = (p'_{X_1},\ldots,p'_{X_{i-1}},p'_{X_{i+1}},\ldots,p'_{X_n},f'_{X_i})\in \A^k_{\#}(\widetilde{X}\times_B X_i)$ if and only if $f'_{X_i}\circ \iota'_{X_i}\in \A^k_{\#}(X_i)$, where $\iota'_{X_i}\colon X_i\to \bar{\bf X}$ (cf. Proposition \ref{mhom}). Thus $\phi^{-1}\circ (p'_{X_1},\ldots,p'_{X_{i-1}},p'_{X_{i+1}},\ldots,p'_{X_n},f'_{X_i})\circ \phi = (p_{X_1},\ldots,p_{X_{i-1}},f_{X_i},p_{X_{i+1}},\ldots,p_{X_n}) \in \A^k_{\#}(\nprod)$ if and only if  $f_{X_i}\circ \iota_{X_i}\in \A^k_{\#}(X_i),~~~(\text{ since } f'_{X_i}\circ \iota'_{X_i} = f_{X_i}\circ \phi^{-1}\circ \iota'_{X_i} = f_{X_i}\circ \iota_{X_i}).$

\end{proof}
As a corollary of Proposition \ref{hrc}, we have the following. 

\begin{corollary}\label{cnredu}
A map $f \in \A^k_{\#}(\nprod)$ is $k$-reducible if $f_{X_i}\circ \iota_{X_i}\in \A^k_{\#}(X_i)$ for all $1\leq i\leq n$.

In particular, a map $f \in \AutB(\nprod)$ is reducible if $f_{X_i}\circ \iota_{X_i}\in \AutB(X_i)$ for all $1\leq i\leq n$.
\end{corollary}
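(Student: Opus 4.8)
The plan is to deduce Corollary \ref{cnredu} directly from Proposition \ref{hrc}, which has just been established. The statement claims that if $f_{X_i}\circ \iota_{X_i}\in \A^k_{\#}(X_i)$ for all $1\leq i\leq n$, then the map $f\in \A^k_{\#}(\nprod)$ is $k$-reducible. By the very definition of $k$-reducibility given just before Proposition \ref{hrc}, the map $f = (f_{X_1},\ldots,f_{X_n})$ is $k$-reducible precisely when $(p_{X_1},\ldots,f_{X_i},\ldots,p_{X_n})\in \A^k_{\#}(\nprod)$ holds for every index $1\leq i\leq n$. So the task reduces to verifying each of these $n$ membership conditions.

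The key observation is that Proposition \ref{hrc} supplies exactly the equivalence we need for each fixed $i$: it asserts that $(p_{X_1},\ldots,f_{X_i},\ldots,p_{X_n})\in \A^k_{\#}(\nprod)$ if and only if $f_{X_i}\circ \iota_{X_i}\in \A^k_{\#}(X_i)$. Thus I would proceed by fixing an arbitrary $i$ with $1\leq i\leq n$, invoking the hypothesis that $f_{X_i}\circ \iota_{X_i}\in \A^k_{\#}(X_i)$, and applying the (harder) direction of Proposition \ref{hrc} to conclude that the corresponding partial projection $(p_{X_1},\ldots,f_{X_i},\ldots,p_{X_n})$ lies in $\A^k_{\#}(\nprod)$. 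Since $i$ was arbitrary, this holds for all $i$, which is exactly the requirement for $f$ to be $k$-reducible.

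For the \emph{in particular} clause concerning $\AutB(\nprod)$, the argument is formally identical: one uses instead the second assertion of Proposition \ref{hrc}, namely that $(p_{X_1},\ldots,f_{X_i},\ldots,p_{X_n})\in \AutB(\nprod)$ if and only if $f_{X_i}\circ \iota_{X_i}\in \AutB(X_i)$. Under the hypothesis $f_{X_i}\circ \iota_{X_i}\in \AutB(X_i)$ for all $i$, each partial projection lies in $\AutB(\nprod)$, and hence $f$ is reducible (which for the group case means $k$-reducible for every $k$, as noted earlier in the paper).

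Since this corollary is purely a translation of the biconditional in Proposition \ref{hrc} through the definition of $k$-reducibility, there is essentially no analytic obstacle to overcome; all the substantive work—in particular the permutation-conjugation reduction to the two-factor case handled by Proposition \ref{mhom}—has already been carried out in the proof of Proposition \ref{hrc}. The only mild point worth stating carefully is that the hypothesis is assumed for \emph{all} $i$ simultaneously, so that one genuinely obtains the full collection of membership conditions defining $k$-reducibility, rather than a single one.
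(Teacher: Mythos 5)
Your proposal is correct and is exactly the argument the paper intends: the corollary is stated without proof as an immediate consequence of Proposition \ref{hrc}, obtained by applying the relevant direction of that equivalence to each index $i$ and then invoking the definition of $k$-reducibility. Nothing further is needed.
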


\begin{proposition}\label{fshe}
Let $X_1,\ldots,X_n\in \CWBB$. For any given $k\geq 0$,  each map in $\A^k_{\#}(\nprod)$ is $k$-reducible. Then a map $f\in \mapbb(\nprod)$ is in $\AutB(\nprod)$ if and only if $f_{X_i}\circ \iota_{X_i}\in \AutB(X_i)$ for all $1\leq i\leq n$.
\end{proposition}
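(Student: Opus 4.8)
The plan is to prove the two implications separately, establishing the forward direction first and then feeding it back into the backward direction, since the two are interlocked.

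For the forward implication I would use the limit description $\AutB(\nprod)=\bigcap_{k\ge 0}\A^k_{\#}(\nprod)$, which holds because a fibrewise self-map inducing isomorphisms on all homotopy groups is a fibre homotopy equivalence (Whitehead together with the Dold-type result quoted in the introduction). So let $f\in\AutB(\nprod)$; then $f\in\A^k_{\#}(\nprod)$ for every $k$, and by the standing reducibility hypothesis each such $f$ is $k$-reducible, whence the partial map $F_i:=(p_{X_1},\ldots,f_{X_i},\ldots,p_{X_n})$ lies in $\A^k_{\#}(\nprod)$ for every $k$ and every $i$. Intersecting over $k$ gives $F_i\in\AutB(\nprod)$, and Proposition \ref{hrc} then yields $f_{X_i}\circ\iota_{X_i}\in\AutB(X_i)$ for all $i$. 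This uses only the reducibility of maps in $\nprod$, so it is logically independent of the converse.

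For the backward implication I would peel off one coordinate at a time and realise $f$ as a product of elements of the \emph{group} $\AutB(\nprod)$. Assuming $f_{X_i}\circ\iota_{X_i}\in\AutB(X_i)$ for all $i$, Proposition \ref{hrc} gives $F_i\in\AutB(\nprod)$. Set $g^{(0)}:=f$ and, having produced $g^{(j)}$ whose first $j$ coordinates are the projections $p_{X_1},\ldots,p_{X_j}$, define the single-coordinate partial map $\tilde F_{j+1}:=(p_{X_1},\ldots,p_{X_j},g^{(j)}_{X_{j+1}},p_{X_{j+2}},\ldots,p_{X_n})$ and put $g^{(j+1)}:=g^{(j)}\circ\tilde F_{j+1}^{-1}$. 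The crucial point is that $\tilde F_{j+1}\in\AutB(\nprod)$: a direct computation gives $g^{(j)}_{X_{j+1}}=\big(F_{j+1}\circ\tilde F_1^{-1}\circ\cdots\circ\tilde F_j^{-1}\big)_{X_{j+1}}$, and since $F_{j+1}\circ\tilde F_1^{-1}\circ\cdots\circ\tilde F_j^{-1}$ is a product of elements of $\AutB(\nprod)$ it again lies in $\AutB(\nprod)$; applying the \emph{already-proven forward implication} to it gives $g^{(j)}_{X_{j+1}}\circ\iota_{X_{j+1}}\in\AutB(X_{j+1})$, so $\tilde F_{j+1}\in\AutB(\nprod)$ by Proposition \ref{hrc}. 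Invoking the $n$-fold analogue of \cite[Proposition 2.3(a)]{pshp}, the inverse of a map supported in a single coordinate is again supported in that coordinate, so composing with $\tilde F_{j+1}^{-1}$ leaves the first $j$ coordinates equal to projections and sends the $(j+1)$-st to $p_{X_{j+1}}$; hence $g^{(j+1)}$ has its first $j+1$ coordinates equal to projections. After $n$ steps $g^{(n)}=\Id_{\nprod}$, so $f=\tilde F_n\circ\cdots\circ\tilde F_2\circ\tilde F_1$ is a product of elements of $\AutB(\nprod)$ and therefore $f\in\AutB(\nprod)$.

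I expect the backward direction to be the main obstacle. The naive product $F_n\circ\cdots\circ F_1$ is \emph{not} fibre homotopic to $f$, because fibrewise composition mixes the coordinates, so one cannot simply multiply the $F_i$ together; the off-diagonal interaction is exactly what must be neutralised. The device that makes the peeling succeed is to recycle the forward implication inside the argument: each intermediate partial map $\tilde F_{j+1}$ is recognised as an equivalence only because the twisted map $F_{j+1}\circ\tilde F_1^{-1}\circ\cdots\circ\tilde F_j^{-1}$ already lies in $\AutB(\nprod)$ as a product of group elements, and the forward direction transfers this to its diagonal restriction. This is precisely why the implications must be carried out in this order. The one routine point still to be checked carefully is the single-coordinate form of each inverse $\tilde F_{j+1}^{-1}$, which is the $n$-fold generalisation of the two-variable fact used in the proof of Theorem \ref{escn}, and is what guarantees that the peeled coordinates stay trivial at every stage.
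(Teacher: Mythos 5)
Your proof is correct and follows essentially the same strategy as the paper's: the forward direction is the same intersection-over-$k$ argument via reducibility and Proposition \ref{hrc}, and the backward direction is the same inductive peeling of one coordinate at a time through single-coordinate factors and their single-coordinate-supported inverses, with the reducibility hypothesis (equivalently, your recycled forward implication) applied to the twisted composites to certify each factor. The only real difference is that the paper first restricts to a fibre and runs the induction in $\Aut(\nprod_b)$ so as to quote \cite[Proposition 2.3(a)]{pshp} for ordinary products, lifting back via Proposition \ref{iprop}, whereas you work directly in $\AutB(\nprod)$ and therefore need the fibrewise $n$-fold form of that proposition --- the routine point you flag, which indeed follows from $p_{X_j}\circ\tilde F^{-1}\simeq^B_B p_{X_j}\circ\tilde F\circ\tilde F^{-1}\simeq^B_B p_{X_j}$ for $j\neq i$.
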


\begin{proof}
Let $f\in \AutB(\nprod)$. So $f\in \A^k_{\#}(\nprod)$,~~ $k\geq 0$. By $k$-reducibility assumption we have $f_{X_i}\circ \iota_{X_i}\in \A^k_{\#}(X_i)$ for all $1\leq i\leq n$ and $k\geq 0$. Therefore for $1\leq i\leq n$ we have $\pi_j(f_{X_i}\circ \iota_{X_i})\colon \pi_j(X_i)\to \pi_j(X_i)$ is isomorphism for all $j$. Then by Whitehead Theorem and Dold Theorem we have $f_{X_i}\circ \iota_{X_i}\in \AutB(X_i)$ for all $1\leq i\leq n$.

Conversely, let $f\in\mapbb(\nprod)$ such that $f_{X_i}\circ \iota_{X_i}\in \AutB(X_i)$ for all $1\leq i\leq n$.  
Thus for $1\leq i\leq n$ we have $f_{X_i}\circ \iota_{X_i}\in \A^k_{\#}(X_i)$ for all $k\geq 0$. From Proposition \ref{hrc}, $(p_{X_1},\ldots,f_{X_i},\ldots,p_{X_n})\in \A^k_{\#}(\nprod)$ for $1\leq i\leq n$ and $\forall k\geq 0$. Therefore for some $b\in B$, we have $(p_{X_1},\ldots,f_{X_i},\ldots,p_{X_n})_b\in \AAA^k_{\#}(\nprod_b)$ for all $k\geq 0$ and $1\leq i\leq n$, using Proposition \ref{iprop}. Hence for $1\leq i\leq n$ we say that $(p_{X_{1b}},\ldots,f_{X_{ib}},\ldots,p_{X_{nb}})\in \Aut(\nprod_b)$ whenever $f_{X_i}\circ \iota_{X_i}\in \AutB(X_i)$ which is equivalent to $f_{X_{ib}}\circ \iota_{X_{ib}} = (f_{X_i}\circ \iota_{X_i})_b\in \Aut(X_{ib})$ from Proposition \ref{iprop}. Observe that,
\begin{align*}
(p_{X_{1b}},\ldots,p_{X_{n-2b}},f_{X_{n-1b}},f_{X_{nb}})=\big[p_{X_{1b}},\ldots,p_{X_{n-1b}},f_{X_{nb}}\circ &(p_{X_{1b}},\ldots,f_{X_{n-1b}},p_{X_{nb}})^{-1}\big]\\ 
&\circ (p_{X_{1b}},\ldots,f_{X_{n-1b}},p_{X_{nb}}).
\end{align*}
From \cite [Proposition 2.3(a)]{pshp} we have,
 $$(p_{X_{1b}},\ldots,f_{X_{ib}},\ldots,p_{X_{nb}})^{-1} = (p_{X_{1b}},\ldots,\bar{f}_{X_{ib}},\ldots,p_{X_{nb}})\in\Aut(\nprod_b),~~\text{ for } 1\leq i\leq n.$$ 
Again,
\begin{align*}
&(p_{X_{1b}},\ldots,f_{X_{nb}})\circ (p_{X_{1b}},\ldots,f_{X_{n-1b}},p_{X_{nb}})^{-1}\\
= &\big(p_{X_{1b}},\ldots,\bar{f}_{X_{n-1b}},f_{X_{nb}}\circ (p_{X_{1b}},\ldots,\bar{f}_{X_{n-1b}},p_{X_{nb}})\big)\in \Aut(\nprod_b).
\end{align*}
By reducibility assumption we have $f_{X_{nb}}\circ (p_{X_{1b}},\ldots,\bar{f}_{X_{n-1b}},p_{X_{nb}})\circ \iota_{X_{nb}}\in \Aut(X_{nb})$. Therefore $\big(p_{X_{1b}},\ldots,f_{X_{nb}}\circ (p_{X_{1b}},\ldots,f_{X_{n-1b}},p_{X_{nb}})^{-1}\big)\in \Aut(\nprod_b)$. Consequently 
$$(p_{X_{1b}},\ldots,p_{X_{n-2b}},f_{X_{n-1b}},f_{X_{nb}})\in \Aut(\nprod_b) \text{ since } f_{X_{ib}}\circ \iota_{X_{ib}}\in \Aut(X_{ib}), \text{for } i = n-1,n.$$ 
Similarly,
\begin{align*}
(p_{X_{1b}},\ldots,p_{X_{n-3b}},&f_{X_{n-2b}},f_{X_{n-1b}},f_{X_{nb}})
= \big[p_{X_{1b}},\ldots,f_{X_{n-1b}}\circ (p_{X_{1b}},\ldots,f_{X_{n-2b}},\ldots,p_{X_{nb}})^{-1},\\
&f_{X_{nb}}\circ (p_{X_{1b}},\ldots,f_{X_{n-2b}},\ldots,p_{X_{nb}})^{-1}\big]\circ (p_{X_{1b}},\ldots,f_{X_{n-2b}},\ldots,p_{X_{nb}}).
\end{align*}
Moreover,
\begin{align*}
&(p_{X_{1b}},\ldots,p_{X_{n-2b}},f_{X_{n-1b}},f_{X_{nb}})\circ (p_{X_{1b}},\ldots,f_{X_{n-2b}},\ldots,p_{X_{nb}})^{-1}
= \big(p_{X_{1b}},\ldots,\bar{f}_{X_{n-2b}},\\ &~~f_{X_{n-1b}}\circ (p_{X_{1b}},\ldots,\bar{f}_{X_{n-2b}},\ldots,p_{X_{nb}}),
f_{X_{nb}}\circ (p_{X_{1b}},\ldots,\bar{f}_{X_{n-2b}},\ldots,p_{X_{nb}})\big)\in \Aut(\nprod_b).
\end{align*}
By reducibility assumption we have $f_{X_{n-1b}}\circ (p_{X_{1b}},\ldots,\bar{f}_{X_{n-2b}},\ldots,p_{X_{nb}})\circ \iota_{X_{n-1b}}\in \Aut(X_{n-1b})$ and $f_{X_{nb}}\circ(p_{X_{1b}},\ldots,\bar{f}_{X_{n-2b}},\ldots,p_{X_{nb}})\circ \iota_{X_{nb}}\in \Aut(X_{nb})$. Therefore $\big(p_{X_{1b}},\ldots,f_{X_{n-1b}}\circ (p_{X_{1b}},\ldots,f_{X_{n-2b}},\ldots,p_{X_{nb}})^{-1},f_{X_{nb}}\circ (p_{X_{1b}},\ldots,f_{X_{n-2b}},\ldots,p_{X_{nb}})^{-1}\big)\in \Aut(\nprod_b)$, using the previous step. Consequently,
 $$(p_{X_{1b}},\ldots,p_{X_{n-3b}},f_{X_{n-2b}},f_{X_{n-1b}},f_{X_{nb}})\in \Aut(\nprod_b) \text{ since } f_{X_{ib}}\circ \iota_{X_{ib}}\in \Aut(X_{ib}), \text{ for } i = $$ 
$n-2,n-1,n$. Proceeding similarly we finally obtain $f_b\in \Aut(\nprod_b)$.
Therefore $f\in \AutB(\nprod)$ from Proposition \ref{iprop}.

\end{proof}

Let $\prod_{\hat{i}}:=X_{1}\times_B \ldots \times_B \hat{X_{i}}\times_B \ldots \times_B X_{n}$, i.e. $\prod_{\hat{i}}$ denotes the subproduct of $\nprod$ obtained by omitting $X_{i}$. As before, we define some sub-monoids of  $\A^k_{\#}(\nprod)$ as:
$$\A^k_{\#,\prod_{\hat i}}(\nprod) := \big\{f\in \A^k_{\#}(\nprod): p_{X_j}\circ f = p_{X_j}|~j\neq i,~1\leq j\leq n\big\}.$$ 
Clearly, the elements of $\A^k_{\#,\prod_{\hat i}}(\nprod)$ are of the form $(p_{X_1},\ldots,f_{X_i},\ldots,p_{X_n})$ where $f_{X_i}\circ \iota_{X_i}\in \A^k_{\#}(X_i)$ and this a monoid. Further define,
$$\A^{k,X_i}_{\#,\prod_{\hat i}}(\nprod): = \big\{(p_{X_1},\ldots.f_{X_i},\ldots,p_{X_n})\in \A^k_{\#,\prod_{\hat i}}(\nprod):~ (p_{X_1},\ldots.f_{X_i},\ldots,p_{X_n})\circ \iota_{X_i} = \iota_{X_i} \big\}.$$ 
 The following theorem gives an explicit relation of $\A^k_{\#}(\nprod)$ with these submonoids, generalising Theorem \ref{ftsg}.

\begin{theorem}\label{mthnps}
Assume that each map $f \in \A^k_{\#}(\nprod)$ is $k$-reducible and satisfies the condition $f_{X_i} \simeq^B_B f_{X_i}\circ \iota_{X_i}\circ p_{X_i}, ~~ \forall~2\leq i\leq n$. Then $$\A^k_{\#}(\nprod) = \A^k_{\#,\prod_{\hat n}}(\nprod)\cdots \A^k_{\#,\prod_{\hat 1}}(\nprod).$$
\end{theorem}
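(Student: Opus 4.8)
The plan is to establish the non-trivial inclusion $\A^k_{\#}(\nprod)\subseteq \A^k_{\#,\prod_{\hat{n}}}(\nprod)\cdots \A^k_{\#,\prod_{\hat{1}}}(\nprod)$; the reverse inclusion is immediate, since each factor is a submonoid of $\A^k_{\#}(\nprod)$ and the latter is closed under composition. Given $f=(f_{X_1},\ldots,f_{X_n})\in \A^k_{\#}(\nprod)$, the $k$-reducibility hypothesis guarantees that for each $i$ the map $g_i:=(p_{X_1},\ldots,f_{X_i},\ldots,p_{X_n})$ lies in $\A^k_{\#}(\nprod)$, and since it agrees with the projection in every coordinate except the $i$-th, in fact $g_i\in \A^k_{\#,\prod_{\hat{i}}}(\nprod)$. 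I would then show that the ordered composite $F:=g_n\circ g_{n-1}\circ\cdots\circ g_1$ is fibrewise pointed homotopic to $f$, which yields the factorization $f=g_n\cdots g_1$ in $[\nprod,\nprod]^B_B$ and hence the claim.

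The heart of the argument is a coordinate-by-coordinate comparison of $F$ with $f$, mimicking the two-fold computation in the last part of Theorem \ref{ftsg}. Writing $G_{i-1}:=g_{i-1}\circ\cdots\circ g_1$ with $G_0=\Id$, I would first note that each $g_j$ alters only the $j$-th coordinate, so that $p_{X_i}\circ g_j=p_{X_i}$ whenever $j\neq i$. Peeling off the factors $g_n,\ldots,g_{i+1}$ (which fix the $i$-th coordinate) and using $p_{X_i}\circ g_i=f_{X_i}$ gives
$$p_{X_i}\circ F = p_{X_i}\circ(g_i\circ\cdots\circ g_1) = f_{X_i}\circ G_{i-1},\qquad 1\le i\le n.$$
The key subsidiary identity is $p_{X_i}\circ G_{i-1}=p_{X_i}$, which holds because $g_1,\ldots,g_{i-1}$ modify only coordinates strictly smaller than $i$ and thus leave the $i$-th coordinate untouched. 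For $i=1$ this already yields $p_{X_1}\circ F=f_{X_1}$ on the nose. For $2\le i\le n$ I would invoke the hypothesis $f_{X_i}\simeq^B_B f_{X_i}\circ \iota_{X_i}\circ p_{X_i}$, precompose it with $G_{i-1}$, and apply the subsidiary identity:
$$f_{X_i}\circ G_{i-1}\simeq^B_B f_{X_i}\circ \iota_{X_i}\circ p_{X_i}\circ G_{i-1} = f_{X_i}\circ \iota_{X_i}\circ p_{X_i}\simeq^B_B f_{X_i}.$$
Hence $p_{X_i}\circ F\simeq^B_B f_{X_i}$ for every $i$, so $F\simeq^B_B f$.

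The main obstacle is the combinatorial bookkeeping of the iterated composition: one must track precisely which coordinates have been altered after applying $g_1,\ldots,g_{i-1}$ in order to justify the two displayed identities $p_{X_i}\circ F=f_{X_i}\circ G_{i-1}$ and $p_{X_i}\circ G_{i-1}=p_{X_i}$. Once the ordering in $g_n\circ\cdots\circ g_1$ is fixed so that the $i$-th factor acts after every factor touching a lower-index coordinate and before every factor touching a higher-index coordinate, the verification collapses, coordinate by coordinate, to exactly the computation carried out for $n=2$ in Theorem \ref{ftsg}. An alternative route is induction on $n$ via the splitting $\nprod=(X_1\times_B\cdots\times_B X_{n-1})\times_B X_n$ together with a direct appeal to Theorem \ref{ftsg}; however, that approach requires separately verifying $k$-reducibility and the homotopy condition for the coarser pair $(X_1\times_B\cdots\times_B X_{n-1},\,X_n)$ — in particular that the map $(f_{X_1},\ldots,f_{X_{n-1}},p_{X_n})$ lies in $\A^k_{\#}(\nprod)$, which is not one of the $g_i$ — so I expect the direct computation above to be cleaner.
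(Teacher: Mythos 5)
Your proposal is correct and follows essentially the same route as the paper: the same factorization $f=g_n\circ\cdots\circ g_1$ with $g_i=(p_{X_1},\ldots,f_{X_i},\ldots,p_{X_n})$, justified by $k$-reducibility, and verified using the hypothesis $f_{X_i}\simeq^B_B f_{X_i}\circ\iota_{X_i}\circ p_{X_i}$ together with the fact that the earlier factors leave the $i$-th coordinate untouched. The only difference is presentational — you compare $p_{X_i}\circ F$ with $f_{X_i}$ coordinate by coordinate, while the paper collapses the composite iteratively from the right — and both reduce to the identical identities.
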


\begin{proof}
Let $f \in \A^k_{\#}(\nprod)$, by $k$-reducibility $(p_{X_1},\ldots,f_{X_i},\ldots,p_{X_n})\in \A^k_{\#,\prod_{\hat i}}(\nprod),~~~\text{ for } 1\leq i\leq n$. Since, $$\A^k_{\#,\prod_{\hat n}}(\nprod)\cap \cdots \cap \A^k_{\#,\prod_{\hat 1}}(\nprod) = (p_{X_1},\ldots,p_{X_n}).$$ Therefore if the factorization exists, it will be unique. Observe that,

\begin{align*}
&(p_{X_1},\ldots,p_{X_{n-1}},f_{X_n})\circ (p_{X_1},\ldots,f_{X_{n-1}},p_{X_n})\circ \cdots \circ(f_{X_1},p_{X_2},\ldots,p_{X_n})\\ 
=~&(p_{X_1},\ldots,p_{X_{n-1}},f_{X_n})\circ (p_{X_1},\ldots,f_{X_{n-1}},p_{X_n})\circ \cdots \circ\\       ~&~\big(f_{X_1},f_{X_2}\circ(f_{X_1},p_{X_2},\ldots,p_{X_n}),\ldots,p_{X_n} \big)\\
\simeq^B_B&(p_{X_1},\ldots,p_{X_{n-1}},f_{X_n})\circ (p_{X_1},\ldots,f_{X_{n-1}},p_{X_n})\circ \cdots \circ\\ ~&~\big(f_{X_1},f_{X_2}\circ \iota_{X_2}\circ p_{X_2}\circ(f_{X_1},p_{X_2},\ldots,p_{X_n}),\ldots,p_{X_n}\big)\\
=~&(p_{X_1},\ldots,p_{X_{n-1}},f_{X_n})\circ (p_{X_1},\ldots,f_{X_{n-1}},p_{X_n})\circ \cdots \circ(f_{X_1},f_{X_2}\circ \iota_{X_2}\circ p_{X_2},\cdots,p_{X_n})\\
\simeq^B_B&(p_{X_1},\ldots,p_{X_{n-1}},f_{X_n})\circ (p_{X_1},\ldots,f_{X_{n-1}},p_{X_n})\circ \cdots \circ(f_{X_1},f_{X_2},p_{X_3},\ldots,p_{X_n})\\
&~\vdots \\
=~&(p_{X_1},\ldots,p_{X_{n-1}},f_{X_n})\circ (f_{X_1},\ldots,f_{X_{n-1}},p_{X_n})\\
=~&\big(f_{X_1},\ldots,f_{X_{n-1}},f_{X_n}\circ (f_{X_1},\ldots,f_{X_{n-1}},p_{X_n})\big)\\
\simeq^B_B &\big(f_{X_1},\ldots,f_{X_{n-1}},f_{X_n}\circ \iota_{X_n}\circ p_{X_n}\circ (f_{X_1},\ldots,f_{X_{n-1}},p_{X_n})\big)\\
=~&\big(f_{X_1},\ldots,f_{X_{n-1}},f_{X_n}\circ \iota_{X_n}\circ p_{X_n} \big)\\
\simeq^B_B &\big(f_{X_1},\ldots,f_{X_{n-1}},f_{X_n} \big)
\end{align*}
Therefore we get the desired result.

\end{proof}

\begin{proposition}\label{splthp}
Let $X_1,\ldots,X_n\in \CWBB$. For each fixed $i\in \{1,\ldots,n\}$, we have a monoid homomorphism $\phi_{X_i}\colon \A^k_{\#,\prod_{\hat i}}(\nprod)\to \A^k_{\#}(X_i)$ defined as $$\phi_{X_i}\big(p_{X_1},\ldots,f_{X_i},\cdots,p_{X_n}\big) = f_{X_i}\circ \iota_{X_i}.$$ 
Moreover there exists split exact sequences, $$0\to \A^{k,X_i}_{\#,\prod_{\hat i}}(\nprod)\to \A^k_{\#,\prod_{\hat i}}(\nprod)\xrightarrow[]{\phi_{X_i}} \A^k_{\#}(X_i)\to 0.$$
\end{proposition}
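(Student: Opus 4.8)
The plan is to mimic the proof of Theorem \ref{ftsg}, treating $X_i$ as the distinguished factor and the remaining subproduct $\prod_{\hat{i}}$ as playing the role of $Y$. The element-level description supplied by Proposition \ref{hrc} tells us that every member of $\A^k_{\#,\prod_{\hat{i}}}(\nprod)$ has the form $(p_{X_1},\ldots,f_{X_i},\ldots,p_{X_n})$ with $f_{X_i}\circ \iota_{X_i}\in \A^k_{\#}(X_i)$, so $\phi_{X_i}$ immediately lands in $\A^k_{\#}(X_i)$ and is well-defined. First I would record the elementary fibrewise identity
$$p_{X_j}\circ \iota_{X_i}\circ g = p_{X_j}\circ \iota_{X_i}, \qquad j\neq i,$$
valid for any fibrewise self-map $g$ of $X_i$; this holds because $p_{X_j}\circ \iota_{X_i} = s_j\circ p_i$ factors through $B$ and $g$ is fibrewise over $B$. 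Assembling these coordinatewise yields the key relation
$$b\circ \iota_{X_i} = \iota_{X_i}\circ \phi_{X_i}(b), \qquad b\in \A^k_{\#,\prod_{\hat{i}}}(\nprod),$$
which is the exact $n$-fold analogue of the identity $\iota_X\circ f_X\circ \iota_X = (f_X,p_Y)\circ \iota_X$ used in Theorem \ref{ftsg}.

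With this relation in hand the homomorphism property follows by the same computation as in the two-factor case: writing $a=(p_{X_1},\ldots,f_{X_i},\ldots,p_{X_n})$ and $b$ of the same shape, the composite $a\circ b$ again lies in $\A^k_{\#,\prod_{\hat{i}}}(\nprod)$ since its off-diagonal coordinates collapse back to the projections, and
$$\phi_{X_i}(a\circ b) = f_{X_i}\circ b\circ \iota_{X_i} = f_{X_i}\circ \iota_{X_i}\circ \phi_{X_i}(b) = \phi_{X_i}(a)\circ \phi_{X_i}(b).$$
For the kernel I would prove both inclusions directly. If $a\in \Ker(\phi_{X_i})$ then $f_{X_i}\circ \iota_{X_i}=\Id_{X_i}$, and substituting into $a\circ \iota_{X_i}=\iota_{X_i}\circ \phi_{X_i}(a)$ gives $a\circ \iota_{X_i}=\iota_{X_i}$, so $a\in \A^{k,X_i}_{\#,\prod_{\hat{i}}}(\nprod)$; conversely, precomposing $a\circ \iota_{X_i}=\iota_{X_i}$ with $p_{X_i}$ and using $p_{X_i}\circ \iota_{X_i}=\Id_{X_i}$ recovers $f_{X_i}\circ \iota_{X_i}=\Id_{X_i}$. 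Hence $\Ker(\phi_{X_i})=\A^{k,X_i}_{\#,\prod_{\hat{i}}}(\nprod)$.

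Finally, to split $\phi_{X_i}$ I would define $\psi_{X_i}\colon \A^k_{\#}(X_i)\to \A^k_{\#,\prod_{\hat{i}}}(\nprod)$ by $h\mapsto (p_{X_1},\ldots,h\circ p_{X_i},\ldots,p_{X_n})$; well-definedness follows from Proposition \ref{hrc} since $(h\circ p_{X_i})\circ \iota_{X_i}=h\in \A^k_{\#}(X_i)$, the homomorphism property is a one-line coordinate computation using $p_{X_i}\circ \psi_{X_i}(h)=h\circ p_{X_i}$, and $\phi_{X_i}\circ \psi_{X_i}=\Id$ is immediate. The individual computations are routine, so the main thing to get right is the bookkeeping of the $n$-fold coordinates — in particular verifying that the off-diagonal projections genuinely collapse under composition, which is what makes $\A^k_{\#,\prod_{\hat{i}}}(\nprod)$ closed and forces the homomorphism identity. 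Since Proposition \ref{hrc} already reduces the single-factor criterion to the two-fold Proposition \ref{mhom} via the permutation homeomorphism, no essentially new idea beyond Theorem \ref{ftsg} is required.
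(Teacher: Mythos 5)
Your proposal is correct and follows essentially the same route as the paper: well-definedness and the splitting via Proposition \ref{hrc}, the key identity $(p_{X_1},\ldots,f_{X_i},\ldots,p_{X_n})\circ \iota_{X_i} = \iota_{X_i}\circ (f_{X_i}\circ \iota_{X_i})$ derived from $p_{X_j}\circ \iota_{X_i}\circ g = p_{X_j}\circ \iota_{X_i}$ for $j\neq i$, and the kernel identification exactly as in Theorem \ref{ftsg}. No substantive differences.
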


\begin{proof}
From Proposition \ref{hrc}, we say that $\phi_{X_i}$ is well-defined. Let us observe that for any self-map $g\colon X_i\to X_i$, we have $p_{X_j}\circ \iota_{X_i}\circ g = p_{X_j}\circ \iota_{X_i}$, for $1\leq j(\neq i)\leq n$.
Moreover for any $1\leq i\leq n$,
\begin{align*}
\iota_{X_i}\circ f_{X_i}\circ \iota_{X_i} = &~\big(p_{X_1}\circ \iota_{X_i}\circ f_{X_i}\circ \iota_{X_i},\ldots,p_{X_i}\circ \iota_{X_i}\circ f_{X_i}\circ \iota_{X_i},\ldots,p_{X_n}\circ \iota_{X_i}\circ f_{X_i}\circ \iota_{X_i} \big)\\
= &~\big(p_{X_1}\circ \iota_{X_i},\ldots,f_{X_i}\circ \iota_{X_i},\ldots,p_{X_n}\circ \iota_{X_i} \big)\\
= &~\big(p_{X_1},\ldots,f_{X_i},\ldots,p_{X_n}\big)\circ \iota_{X_i}.
\end{align*}
Therefore, 
\begin{align*}
&\phi_{X_i}\big((p_{X_1},\ldots,f_{X_i},\ldots,p_{X_n})\circ (p_{X_1},\ldots,g_{X_i},\ldots,p_{X_n})\big)\\
=~&\phi_{X_i}\big((p_{X_1},\ldots,f_{X_i}\circ (p_{X_1},\ldots,g_{X_i},\ldots,p_{X_n}),\ldots,p_{X_n})\big)\\
=~& f_{X_i}\circ (p_{X_1},\ldots,g_{X_i},\ldots,p_{X_n})\circ \iota_{X_i}\\
=~& f_{X_i}\circ \iota_{X_i}\circ g_{X_i}\circ \iota_{X_i}\\
=~& \phi_{X_i}\big(p_{X_1},\ldots,f_{X_i},\ldots,p_{X_n}\big)\circ \phi_{X_i}\big(p_{X_1},\ldots,g_{X_i},\ldots,p_{X_n}\big).
\end{align*}
\noindent Hence $\phi_{X_i}$ is a monoid homomorphism. As in the first part of Theorem \ref{ftsg}, we can show that $\Ker(\phi_{X_i}) = \A^{k,X_i}_{\#,\prod_{\hat i}}(\nprod)$. Consider the map $\psi_{X_i}\colon \A^k_{\#}(X_i)\to \A^k_{\#,\prod_{\hat i}}(\nprod)$ defined as $$h\mapsto \big(p_{X_1},\ldots,h\circ p_{X_i},\ldots,p_{X_n}\big).$$ This map is well-defined by Proposition \ref{hrc}. Clearly $\psi_{X_i}$ is a monoid homomorphism such that $\phi_{X_i}\circ \psi_{X_i} = \Id_{\A^k_{\#}(X_i)}.$

\end{proof}

The following theorem is a generalization of Theorem \ref{hbthm} for $n$-fibred product. 
\begin{theorem}\label{hbfn}
Let us assume that each map $f \in \A^k_{\#}(\nprod)$ is $k$-reducible and satisfying the conditions $f_{X_i} \simeq^B_B f_{X_i}\circ \iota_{X_i}\circ p_{X_i}, ~~ \forall~~2\leq i\leq n$. Then there exits a split short exact sequence $$0\to \A^{k,X_1}_{\#,\prod_{\hat 1}}(\nprod)\to \A^k_{\#}(\nprod)\xrightarrow[]{\phi} \A^k_{\#}(X_1)\times \cdots \times \A^k_{\#}(X_n)\to 0,$$ 
where $\phi(f) = (f_{X_1}\circ \iota_{X_1},\ldots,f_{X_n}\circ \iota_{X_n})$ for each $f\in \A^k_{\#}(\nprod)$.
\end{theorem}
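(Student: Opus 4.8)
The plan is to reproduce, in the $n$-fold setting, the four-step scheme used for Theorem \ref{hbthm}: check that $\phi$ is well-defined, that it is a monoid homomorphism, that its kernel is exactly $\A^{k,X_1}_{\#,\prod_{\hat 1}}(\nprod)$, and that a splitting $\psi$ exists. Well-definedness is immediate: by hypothesis every $f\in\A^k_{\#}(\nprod)$ is $k$-reducible, so $(p_{X_1},\ldots,f_{X_i},\ldots,p_{X_n})\in\A^k_{\#}(\nprod)$ for each $i$, and Proposition \ref{hrc} then gives $f_{X_i}\circ\iota_{X_i}\in\A^k_{\#}(X_i)$; hence $\phi(f)$ lands in $\prod_{i}\A^k_{\#}(X_i)$.

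The homomorphism property is the crux, and I expect it to be the main obstacle. Writing $\phi(f\circ g)_i = (f\circ g)_{X_i}\circ\iota_{X_i} = f_{X_i}\circ g\circ\iota_{X_i}$, the goal for each $i$ is to identify this with $(f_{X_i}\circ\iota_{X_i})\circ(g_{X_i}\circ\iota_{X_i})$, and I would split into two cases reflecting the asymmetry of the hypotheses. For $2\le i\le n$ I invoke $f_{X_i}\simeq^B_B f_{X_i}\circ\iota_{X_i}\circ p_{X_i}$ directly, so $f_{X_i}\circ g\circ\iota_{X_i}\simeq^B_B f_{X_i}\circ\iota_{X_i}\circ(p_{X_i}\circ g)\circ\iota_{X_i} = f_{X_i}\circ\iota_{X_i}\circ g_{X_i}\circ\iota_{X_i}$, which is exactly the desired product. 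For $i=1$ no such homotopy is available, so instead I would prove $g\circ\iota_{X_1}\simeq^B_B \iota_{X_1}\circ g_{X_1}\circ\iota_{X_1}$ and then post-compose with $f_{X_1}$. Since a fibrewise pointed map into the fibred product $\nprod$ is determined by, and homotoped through, its $n$ components, it suffices to compare components: the two maps agree in slot $1$ (both equal $g_{X_1}\circ\iota_{X_1}$, using $p_{X_1}\circ\iota_{X_1}=\Id$), while in slot $j\ge2$ one has $g_{X_j}\circ\iota_{X_1}$ versus $p_{X_j}\circ\iota_{X_1}$; applying the hypothesis $g_{X_j}\simeq^B_B g_{X_j}\circ\iota_{X_j}\circ p_{X_j}$ together with section-preservation of $g_{X_j}$ and the fact that $p_{X_j}\circ\iota_{X_1}$ factors through the section collapses $g_{X_j}\circ\iota_{X_1}$ to $p_{X_j}\circ\iota_{X_1}$. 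The delicacy is precisely that the $i=1$ component must be controlled entirely through the conditions on the other factors, with careful tracking of the fibrewise basepoints — the analogue of the intricate $f_Y$-line in Theorem \ref{hbthm}.

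For the kernel I would use the homotopy conditions to collapse the higher slots. If $f\in\Ker(\phi)$ then $f_{X_i}\circ\iota_{X_i}\simeq^B_B\Id_{X_i}$ for all $i$; for $i\ge2$, combining this with $f_{X_i}\simeq^B_B f_{X_i}\circ\iota_{X_i}\circ p_{X_i}$ yields $f_{X_i}\simeq^B_B p_{X_i}$, so $f=(f_{X_1},p_{X_2},\ldots,p_{X_n})\in\A^k_{\#,\prod_{\hat 1}}(\nprod)$, and the remaining relation $f_{X_1}\circ\iota_{X_1}\simeq^B_B\Id_{X_1}=p_{X_1}\circ\iota_{X_1}$ gives $(f_{X_1},p_{X_2},\ldots,p_{X_n})\circ\iota_{X_1}=\iota_{X_1}$, i.e. $f\in\A^{k,X_1}_{\#,\prod_{\hat 1}}(\nprod)$; the reverse inclusion is the same computation read backwards, as in the kernel identification of Proposition \ref{splthp}.

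Finally, for the splitting I would set $\psi(h_1,\ldots,h_n):=(h_1\circ p_{X_1},\ldots,h_n\circ p_{X_n})$. Writing it as the composite of the $n$ factors $(p_{X_1},\ldots,h_i\circ p_{X_i},\ldots,p_{X_n})$, each of which lies in $\A^k_{\#,\prod_{\hat i}}(\nprod)$ by Proposition \ref{hrc}, shows $\psi$ is well-defined; a direct componentwise computation, using $p_{X_i}\circ(h'_1\circ p_{X_1},\ldots,h'_n\circ p_{X_n})=h'_i\circ p_{X_i}$, shows $\psi$ is a monoid homomorphism; and $p_{X_i}\circ\iota_{X_i}=\Id_{X_i}$ gives $\phi\circ\psi=\Id$. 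This exhibits the sequence as split exact and completes the argument.
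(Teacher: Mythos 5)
Your proposal is correct and follows essentially the same route as the paper's proof: the same case split in the homomorphism computation (using $f_{X_i}\simeq^B_B f_{X_i}\circ\iota_{X_i}\circ p_{X_i}$ for $i\ge 2$ and collapsing $g_{X_j}\circ\iota_{X_1}$ to $p_{X_j}\circ\iota_{X_1}$ through the section for the first slot), the same splitting $\psi$, and the same conclusion about the kernel. The only cosmetic difference is that you compute $\Ker(\phi)$ directly, whereas the paper first shows $\A^{k,X_i}_{\#,\prod_{\hat i}}(\bX)=\{\ast\}$ for $i\ge 2$ and then identifies $\Ker(\phi)$ with $\Ker(\phi_{X_1})$.
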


\begin{proof}
Since each  map $f \in \A^k_{\#}(\nprod)$ is $k$-reducible, therefore the map $\phi$ is well-defined by Corollary \ref{cnredu}. For $1\leq j(\neq i)\leq n$, let us observe that $h\circ p_{X_j}\circ \iota_{X_i} = p_{X_j}\circ \iota_{X_i}$ where $h\in \mapbb (X_j)$. Moreover as in the proof of Proposition \ref{splthp} we have $\iota_{X_i}\circ f_{X_i}\circ \iota_{X_i} = \big(p_{X_1},\ldots,f_{X_i},\ldots,p_{X_n}\big)\circ \iota_{X_i}$, for $1\leq i\leq n$.

\noindent Therefore
\begin{align*}
&\phi\big((f_{X_1},\ldots,f_{X_n})\circ (g_{X_1},\ldots,g_{X_n})\big)\\
=~&\phi \big(f_{X_1}\circ (g_{X_1},\ldots,g_{X_n}),\ldots,f_{X_n}\circ (g_{X_1},\ldots,g_{X_n})\big)\\
=~&\big(f_{X_1}\circ (g_{X_1},\ldots,g_{X_n})\circ \iota_{X_1},f_{X_2}\circ (g_{X_1},\ldots,g_{X_n})\circ \iota_{X_2}, \ldots,\\ &~f_{X_n}\circ (g_{X_1},\ldots,g_{X_n})\circ \iota_{X_n}\big)\\ 
\simeq^B_B&\big(f_{X_1}\circ (g_{X_1},g_{X_2}\circ \iota_{X_2}\circ p_{X_2}\ldots,g_{X_n}\circ \iota_{X_n}\circ p_{X_n})\circ \iota_{X_1},\\ &~f_{X_2}\circ \iota_{X_2}\circ p_{X_2}\circ (g_{X_1},\ldots,g_{X_n})\circ \iota_{X_2},\ldots,f_{X_n}\circ \iota_{X_n}\circ p_{X_n}\circ (g_{X_1},\ldots,g_{X_n})\circ \iota_{X_n}\big)\\
=~&\big(f_{X_1}\circ (g_{X_1}\circ \iota_{X_1},g_{X_2}\circ \iota_{X_2}\circ p_{X_2}\circ \iota_{X_1}\ldots,g_{X_n}\circ \iota_{X_n}\circ p_{X_n}\circ \iota_{X_1}),\\ &~f_{X_2}\circ \iota_{X_2}\circ g_{X_2}\circ \iota_{X_2},\ldots,f_{X_n}\circ \iota_{X_n}\circ g_{X_n}\circ \iota_{X_n}\big)\\
=~&\big(f_{X_1}\circ (g_{X_1}\circ \iota_{X_1},p_{X_2}\circ \iota_{X_1},\ldots,p_{X_n}\circ \iota_{X_1}),f_{X_2}\circ \iota_{X_2}\circ g_{X_2}\circ \iota_{X_2},\\ &~\ldots,f_{X_n}\circ \iota_{X_n}\circ g_{X_n}\circ \iota_{X_n}\big)\\
=~&\big(f_{X_1}\circ (g_{X_1},p_{X_2},\ldots,p_{X_n})\circ \iota_{X_1},f_{X_2}\circ \iota_{X_2}\circ g_{X_2}\circ \iota_{X_2},\ldots,f_{X_n}\circ \iota_{X_n}\circ g_{X_n}\circ \iota_{X_n}\big)\\
=~&\big(f_{X_1}\circ \iota_{X_1}\circ g_{X_1}\circ \iota_{X_1},\ldots,f_{X_n}\circ \iota_{X_n}\circ g_{X_n}\circ \iota_{X_n}\big)\\
=~&\phi\big((f_{X_1},\ldots,f_{X_n})\big)\circ \phi\big((g_{X_1},\ldots,g_{X_n})\big)
\end{align*}
This shows that $\phi$ is a monoid homomorphism. 

Any element of $\A^{k,X_i}_{\#,\prod_{\hat i}}(\nprod)$ is of the form $(p_{X_1},\ldots,f_{X_i},\ldots,p_{X_n})$ satisfying the condition $(p_{X_1},\ldots,f_{X_i},\ldots,p_{X_n})\circ \iota_{X_i} = \iota_{X_i}.$
First pre-composing with $p_{X_i}$ and then post-composing with $p_{X_i}$ we have $f_{X_i}\circ \iota_{X_i}\circ p_{X_i} = p_{X_i}$. By assumption, $f_{X_i}\simeq^B_B f_{X_i}\circ \iota_{X_i}\circ p_{X_i} =  p_{X_i}$, for all $2\leq i\leq n$. Therefore $\A^{k,X_i}_{\#,\prod_{\hat i}}(\nprod) = \{\ast\}$ for all $2\leq i\leq n$. Hence $\Ker(\phi)$ can be identified with $\Ker(\phi_{X_1}) = \A^{k,X_1}_{\#,\prod_{\hat 1}}(\nprod)$. Further we consider a map $\psi\colon \A^k_{\#}(X_1)\times \cdots \times \A^k_{\#}(X_n) \to \A^k_{\#}(\nprod)$ defined as $$(f_1,\ldots,f_n)\mapsto (f_1\circ p_{X_1},\ldots,f_n\circ p_{X_n}).$$
Observe that, $$(f_1\circ p_{X_1},\ldots,f_n\circ p_{X_n}) = (f_1\circ p_{X_1},\ldots,p_{X_n})\circ \cdots\circ (p_{X_1},\ldots,f_n\circ p_{X_n})\in \A^k_{\#}(\nprod),$$
since $(p_{X_1},\ldots,f_{i}\circ p_{X_i},\ldots,p_{X_n})\in \A^k_{\#}(\nprod)$ for $1\leq i\leq n$, from Proposition \ref{hrc}. Therefore $\psi$ is well-defined. Clearly $\psi$ is a monoid homomorphism  such that $\phi \circ \psi = \Id_{\A^k_{\#}(X_1)\times \cdots \times \A^k_{\#}(X_n)}$.

\end{proof}

We have the following generalisation of Theorem \ref{scn} and Theorem \ref{escn}. 

\begin{theorem}\label{escnp}
For $X_1,\ldots,X_n\in \CWBB$, we have $$\N(\nprod) \geq \max \big \{\N(X_1),\ldots,\N(X_n) \big \}.$$
Further, for any $k\geq 0$ if each map in $\A^k_{\#}(\nprod)$ is $k$-reducible, then 
$$\N(\nprod) = \max \big \{\N(X_1),\ldots,\N(X_n) \big \}.$$
\end{theorem}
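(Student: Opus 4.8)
The plan is to mirror the two-factor arguments of Theorem \ref{scn} and Theorem \ref{escn}, letting Proposition \ref{iprop} and Proposition \ref{fshe} carry the structural work in the $n$-fold setting. Throughout I would pass to a fibre $\nprod_b = X_{1b}\times\cdots\times X_{nb}$ and use that the homotopy groups of a finite product split as a product over the factors.

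For the inequality I would first dispose of the case $\N(\nprod)=\infty$, where the claim is vacuous, and assume $\N(\nprod)=N<\infty$. Given arbitrary $f_i\in\A^N_{\#}(X_i)$ for $1\le i\le n$, I form the fibrewise product self-map $f_1\times_B\cdots\times_B f_n\colon\nprod\to\nprod$, which is well-defined since each $f_i$ commutes with the projection to $B$. Its restriction to the fibre is $f_{1b}\times\cdots\times f_{nb}$, and since $\pi_j(f_{ib})$ is an isomorphism for $0\le j\le N$ by Proposition \ref{iprop}, so is $\pi_j(f_{1b}\times\cdots\times f_{nb})=\prod_i\pi_j(f_{ib})$; hence $f_1\times_B\cdots\times_B f_n\in\A^N_{\#}(\nprod)=\AutB(\nprod)$ by Proposition \ref{iprop} again. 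Applying Proposition \ref{iprop} once more, its fibre restriction lies in $\Aut(\nprod_b)$, so $\pi_j(f_{1b}\times\cdots\times f_{nb})$ is an isomorphism for \emph{all} $j\ge 0$. As this homomorphism is the product of the $\pi_j(f_{ib})$, each $\pi_j(f_{ib})$ is an isomorphism for all $j$, whence $f_{ib}\in\Aut(X_{ib})$ and $f_i\in\AutB(X_i)$ by Proposition \ref{iprop}. Thus $\A^N_{\#}(X_i)=\AutB(X_i)$ and $\N(X_i)\le N$ for every $i$, giving $\max_i\N(X_i)\le\N(\nprod)$.

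For the equality I would assume each map in $\A^k_{\#}(\nprod)$ is $k$-reducible. If some $\N(X_i)=\infty$ then $\N(\nprod)=\infty$ by the inequality and there is nothing to prove, so I set $m_i=\N(X_i)$ and $N=\max_i m_i<\infty$. Take any $f=(f_{X_1},\ldots,f_{X_n})\in\A^N_{\#}(\nprod)$. By the reducibility hypothesis and Corollary \ref{cnredu} (equivalently Proposition \ref{hrc}), each $f_{X_i}\circ\iota_{X_i}\in\A^N_{\#}(X_i)$; and since $N\ge m_i$ we have $\A^N_{\#}(X_i)\subseteq\A^{m_i}_{\#}(X_i)=\AutB(X_i)$, so $f_{X_i}\circ\iota_{X_i}\in\AutB(X_i)$ for all $i$. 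Proposition \ref{fshe} then yields $f\in\AutB(\nprod)$. Hence $\A^N_{\#}(\nprod)=\AutB(\nprod)$, so $\N(\nprod)\le N$; combined with the inequality already proved this gives $\N(\nprod)=\max_i\N(X_i)$.

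The structural content is entirely absorbed into Propositions \ref{iprop} and \ref{fshe}, so the remaining steps are routine bookkeeping. The one point deserving care is the inequality direction, where one must use the product decomposition of the homotopy groups of $\nprod_b$ in both directions: first to verify that the product map lands in $\A^N_{\#}(\nprod)$, and then to descend the isomorphism statement from the product map back to each individual factor $f_{ib}$. This is exactly the $n$-fold analogue of the step in Theorem \ref{scn} where $f_b\times g_b\in\Aut(X_b\times Y_b)$ forces both $f_b$ and $g_b$ to be equivalences, and I expect no essential difficulty beyond this factorwise argument.
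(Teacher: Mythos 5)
Your proposal is correct and takes essentially the same approach as the paper: the equality half is identical (reducibility, Corollary \ref{cnredu}, and Proposition \ref{fshe}), and for the inequality you merely inline the $n$-fold version of the Proposition \ref{iprop} argument where the paper instead iterates the two-factor Theorem \ref{scn} by grouping $\nprod$ as $(X_1\times_B\cdots\times_B X_{n-1})\times_B X_n$. The underlying mechanism (restriction to a fibre, the product splitting of homotopy groups, and Whitehead) is the same in both versions.
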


\begin{proof}
From Theorem \ref{scn} we have,
$$\N(\nprod) \geq \max \big\{\N(X_1\times_B \cdots \times_B X_{n-1}),\N(X_n)\big\}.$$
Similarly,  
$$\N(X_1\times_B \cdots \times_B X_{n-1}) \geq \max \big\{\N(X_1\times_B \cdots \times_B X_{n-2}),\N(X_{n-1})\big\}.$$ 
Combing this two inequalities we get, 
$$\N(\nprod) \geq \max \big\{\N(X_1\times_B \cdots \times_B X_{n-2}),\N(X_{n-1}),\N(X_n)\big\}.$$
Continuing further, we obtain the desired inequality. 

For the second part, consider the case $\N(X_i)<\infty$ for all $1\leq i\leq n$. Let $k =\max \big \{\N(X_1),\ldots,\N(X_n) \}$ and $f\in \A^k_{\#}(\nprod)$. By $k$-reducibility we have $f_{X_i}\circ \iota_{X_i}\in \A^k_{\#}(X_i)  = \AutB(X_i)$. Therefore $f\in \AutB(\nprod)$, using Proposition \ref{fshe}. Thus $\A^k_{\#}(\nprod) = \AutB(\nprod)$. So $\N(\nprod) \leq k.$ Hence by the first part, we obtain $$\N(\nprod) = \max \big \{\N(X_1),\ldots,\N(X_n) \big \}.$$

\end{proof}

\begin{proposition}\label{slnps}
Let $X_1,\ldots,X_n\in \CWBB$. For any non-negative integer $l$, we have
\begin{enumerate}[(a)]
\item
If $\A^{l+1}_{\#}(X_i)\subsetneq \A^l_{\#}(X_i)$ for some $i\in \{1,\ldots,n\}$ then $\A^{l+1}_{\#}(\nprod)\subsetneq \A^l_{\#}(\nprod)$.

\item The converse holds if given any $k\geq 0$, each map in $\A^k_{\#}(\nprod)$ is $k$-reducible and for any $2\leq i\leq n$ satisfies the conditions $f_{X_i} \simeq^B_B f_{X_i}\circ \iota_{X_i}\circ p_{X_i}$, for all $f\in \A^k_{\#}(\nprod)$.
\end{enumerate}
\end{proposition}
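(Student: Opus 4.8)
The plan is to follow the template of Proposition \ref{slps} verbatim, establishing each implication by contradiction and pushing everything down to a fibre via Proposition \ref{iprop}, supplemented by the $n$-fold reducibility machinery of Proposition \ref{hrc}, Corollary \ref{cnredu} and the factorisation of Theorem \ref{mthnps}. Part (a) is the ``easy'' direction and needs only the fibrewise criterion, while part (b) is where the full strength of the hypotheses is consumed.

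For part (a), suppose $\A^{l+1}_{\#}(X_i)\subsetneq \A^l_{\#}(X_i)$ for some fixed $i$, and assume for contradiction that $\A^{l+1}_{\#}(\nprod) = \A^l_{\#}(\nprod)$. I would pick $f\in \A^l_{\#}(X_i)\setminus \A^{l+1}_{\#}(X_i)$ and manufacture the self-map of $\nprod$ that is $f$ in the $i$-th coordinate and the identity elsewhere, namely $F := \Id_{X_1}\times_B\cdots\times_B f\times_B\cdots\times_B\Id_{X_n}$ (the exact analogue of $\fBg$ used in Theorem \ref{scn}). Since each coordinate lies in $\A^l_{\#}$ of the corresponding factor, Proposition \ref{iprop} gives $F\in \A^l_{\#}(\nprod) = \A^{l+1}_{\#}(\nprod)$. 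Restricting to a fibre, $F_b = \Id\times\cdots\times f_b\times\cdots\times\Id$ lies in $\AAA^{l+1}_{\#}$ of the product of fibres, so $\pi_j(F_b)$ is an isomorphism for $j\le l+1$. Because $\pi_j$ of a finite product splits as the product of the $\pi_j$ of the coordinate maps, the $i$-th factor $\pi_j(f_b)$ is an isomorphism for all $j\le l+1$; hence $f_b\in\AAA^{l+1}_{\#}(X_{ib})$ and, by Proposition \ref{iprop} again, $f\in \A^{l+1}_{\#}(X_i)$, contradicting the choice of $f$.

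For part (b), assume the standing hypotheses (every map in $\A^k_{\#}(\nprod)$ is $k$-reducible, and $f_{X_i}\simeq^B_B f_{X_i}\circ\iota_{X_i}\circ p_{X_i}$ for $2\le i\le n$), take $\A^{l+1}_{\#}(\nprod)\subsetneq \A^l_{\#}(\nprod)$, and assume for contradiction that $\A^{l+1}_{\#}(X_i)=\A^l_{\#}(X_i)$ for every $i$. Choosing $h=(h_{X_1},\ldots,h_{X_n})\in \A^l_{\#}(\nprod)\setminus \A^{l+1}_{\#}(\nprod)$, the $l$-reducibility hypothesis and Corollary \ref{cnredu} give $h_{X_i}\circ\iota_{X_i}\in\A^l_{\#}(X_i)=\A^{l+1}_{\#}(X_i)$, whence $(p_{X_1},\ldots,h_{X_i},\ldots,p_{X_n})\in\A^{l+1}_{\#}(\nprod)$ for each $i$ by Proposition \ref{hrc}. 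Now I would invoke Theorem \ref{mthnps} at level $l$: since $h$ is $l$-reducible and satisfies the homotopy condition, it factors up to fibrewise pointed homotopy as $(p_{X_1},\ldots,p_{X_{n-1}},h_{X_n})\circ\cdots\circ(h_{X_1},p_{X_2},\ldots,p_{X_n})$, a composite of precisely these reduced maps. Each factor lies in $\A^{l+1}_{\#}(\nprod)$, and as $\A^{l+1}_{\#}(\nprod)$ is a monoid the composite does too, so $h\in\A^{l+1}_{\#}(\nprod)$ --- a contradiction. Consequently some $X_i$ satisfies $\A^{l+1}_{\#}(X_i)\subsetneq\A^l_{\#}(X_i)$.

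I expect the only genuinely delicate point to be the bookkeeping in part (b): one must verify that the factorisation supplied by Theorem \ref{mthnps} is an identity of homotopy classes whose individual factors are exactly the maps $(p_{X_1},\ldots,h_{X_i},\ldots,p_{X_n})$ already shown to lie in the smaller monoid, so that membership of $h$ in $\A^{l+1}_{\#}(\nprod)$ is inherited from the factors under composition. The fibrewise arguments in part (a) are routine once the product map $F$ is written down, since they use only that $\pi_j$ of a finite product decomposes coordinatewise.
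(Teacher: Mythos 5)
Your proposal is correct and follows essentially the same route as the paper: part (a) via a product self-map restricted to a fibre together with Proposition \ref{iprop} and the coordinatewise splitting of homotopy groups (the paper uses an arbitrary $g$ on the complementary factor plus the permutation homeomorphism of Proposition \ref{hrc}, whereas you take identities, which is only a cosmetic difference), and part (b) via reducibility, Proposition \ref{hrc}, and the factorisation of Theorem \ref{mthnps} inside the monoid $\A^{l+1}_{\#}(\nprod)$.
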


\begin{proof}

(a)
For some fixed $i$, let $\A^{l+1}_{\#}(X_i)\subsetneq \A^l_{\#}(X_i)$. If possible assume that $\A^{l+1}_{\#}(\nprod) = \A^l_{\#}(\nprod)$. Let $f\in \A^l_{\#}(X_i) \setminus \A^{l+1}_{\#}(X_i)$. Then for some $g\in \A^l_{\#}(\prod_{\hat i})$ we have $g\times_Bf\in \A^l(\prod_{\hat i}\times_B X_i)$. As in the proof of`Proposition \ref{hrc}, we have $\phi$ such that $\phi^{-1}\circ g\times_Bf\circ \phi \in \A^l_{\#}(\nprod)$ (see Diagram \ref{dig4}). Thus $\phi^{-1}\circ g\times_Bf\circ \phi \in \A^{l+1}_{\#}(\nprod) = \A^l_{\#}(\nprod)$. So $\pi_i(\phi^{-1}\circ g\times_Bf\circ \phi)$ is an isomorphism for all $i\leq l+1$. Therefore using the diagram \ref{dig4} we have $\pi_i(g\times_B f)$ is isomorphism for all $i\leq l+1$, i.e. $g\times_B f\in \A^{l+1}_{\#}(\prod_{\hat i}\times_B X_i)$. As in the proof of Proposition \ref{slps}(a),  we have $f\in \A^{l+1}_{\#}(X_i)$. This is a contradiction.

(b)
For the converse implication it is given that $\A^{l+1}_{\#}(\nprod)\subsetneq \A^l_{\#}(\nprod)$. If possible assume that $\A^{l+1}_{\#}(X_i) = \A^l_{\#}(X_i)$ for all $1\leq i\leq n$. Let $f \in \A^l_{\#}(\nprod) \setminus \A^{l+1}_{\#}(\nprod)$. Thus by $k$-reducibility we have $f_{X_i}\circ \iota_{X_i}\in \A^l_{\#}(X_i)$, so that $f_{X_i}\circ \iota_{X_i}\in \A^{l+1}_{\#}(X_i) = \A^l_{\#}(X_i)$ for all $1\leq i\leq n$. From Proposition \ref{hrc} we have $(p_{X_1},\ldots, f_{X_i},\ldots,p_{X_n})\in \A^{l+1}_{\#}(\nprod)$, for all $1\leq i\leq n$. Further, as in the proof of Theorem \ref{mthnps},
$$f = (p_{X_1},\ldots,p_{X_{n-1}},f_{X_n})\circ (p_{X_1},\ldots,f_{X_{n-1}},p_{X_n})\circ \cdots \circ(f_{X_1},p_{X_2},\ldots,p_{X_n}) \in \A^{l+1}_{\#}(\nprod).$$ 
 This contradicts the fact that $f \in \A^l_{\#}(\nprod) \setminus \A^{l+1}_{\#}(\nprod)$. Hence $\A^{l+1}_{\#}(X_i)\subsetneq \A^l_{\#}(X_i)$ for some $i\in \{1,\ldots,n\}$.

\end{proof}	

We now obtain a generalisation of Theorem \ref{eslps} for $n$-fibred product. 

\begin{theorem}\label{slonps}
For $X_1,\ldots,X_n\in \CWBB$, we have $$\sum_{i = 1}^n \L(X_i) - \mathcal{C} \leq \L(\nprod),$$ 
$$ \text{ where } \mathcal{C} = \sum_{j = 2}^n (-1)^j \sum_{1\leq i_1< \cdots <i_j\leq n} \#(I_{(X_{i_1})}\cap \cdots \cap I_{(X_{i_j})}).$$
Moreover the equality holds if given any $k\geq 0$, each map in $\A^k_{\#}(\nprod)$ is $k$-reducible and for any $2\leq i\leq n$ satisfies the conditions $f_{X_i} \simeq^B_B f_{X_i}\circ \iota_{X_i}\circ p_{X_i}$, for all $f\in \A^k_{\#}(\nprod)$.
\end{theorem}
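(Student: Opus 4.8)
The plan is to mirror the two-space argument of Theorem \ref{eslps}, replacing the binary inclusion--exclusion with its $n$-fold version and invoking Proposition \ref{slnps} in place of Proposition \ref{slps}. Recall that $\L(Z) = \# I_{(Z)}$ for any $Z\in\CWBB$, where $I_{(Z)} = \{\, l\ge 0 : \A^{l+1}_{\#}(Z)\subsetneq \A^l_{\#}(Z)\,\}$. First I would dispose of the degenerate case: if some $\L(X_i)$ is infinite, then $I_{(X_i)}$ is infinite, and Proposition \ref{slnps}(a) gives $I_{(X_i)}\subseteq I_{(\nprod)}$, so $\L(\nprod)=\infty$ and the inequality holds trivially. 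Hence I may assume every $\L(X_i)$ is finite.

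The core step is a single set inclusion. By Proposition \ref{slnps}(a), whenever $l\in I_{(X_i)}$ for some $i$ we have $l\in I_{(\nprod)}$; that is,
$$\bigcup_{i=1}^n I_{(X_i)} \subseteq I_{(\nprod)}.$$
Taking cardinalities gives $\#\big(\bigcup_{i=1}^n I_{(X_i)}\big)\le \# I_{(\nprod)} = \L(\nprod)$. It then remains to evaluate the left-hand side by the inclusion--exclusion principle:
$$\#\Big(\bigcup_{i=1}^n I_{(X_i)}\Big) = \sum_{i=1}^n \# I_{(X_i)} + \sum_{j=2}^n (-1)^{j+1} \sum_{1\le i_1<\cdots<i_j\le n} \#\big(I_{(X_{i_1})}\cap \cdots \cap I_{(X_{i_j})}\big).$$
Since $\# I_{(X_i)} = \L(X_i)$ and $(-1)^{j+1} = -(-1)^j$, the second sum is exactly $-\mathcal{C}$, so the right-hand side equals $\sum_{i=1}^n \L(X_i) - \mathcal{C}$. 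Combining this with the cardinality bound yields the asserted inequality.

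For the equality statement, under the stated hypotheses Proposition \ref{slnps}(b) supplies the converse of part (a): $l\in I_{(\nprod)}$ forces $l\in I_{(X_i)}$ for some $i$. Hence the inclusion above is in fact an equality $\bigcup_{i=1}^n I_{(X_i)} = I_{(\nprod)}$, and passing to cardinalities upgrades the inequality to $\sum_{i=1}^n \L(X_i) - \mathcal{C} = \L(\nprod)$.

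I expect no genuine topological obstacle here, since Proposition \ref{slnps} already packages the fibrewise content into the two set-containment statements I need. The only point requiring real care is the bookkeeping of signs in the $n$-fold inclusion--exclusion formula and the verification that the cross-term sum matches the constant $\mathcal{C}$ precisely; in particular I would confirm the index range $j=2,\dots,n$ and the alternating sign $(-1)^j$. A secondary, automatic check is that the identity remains valid when some intersection sets are empty, which is guaranteed because inclusion--exclusion holds for arbitrary finite families of sets.
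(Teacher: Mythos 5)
Your proposal is correct and follows essentially the same route as the paper: reduce to the finite case, use Proposition \ref{slnps}(a) to get $\bigcup_{i=1}^n I_{(X_i)}\subseteq I_{(\nprod)}$, apply inclusion--exclusion to identify $\#\bigl(\bigcup_i I_{(X_i)}\bigr)$ with $\sum_i \L(X_i)-\mathcal{C}$, and invoke Proposition \ref{slnps}(b) for the equality case. The sign bookkeeping matches the paper's computation exactly.
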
	

\begin{proof}
Note that if any one of $\L(X_i)$ is infinite then $\L(\nprod)$ is infinite. So we only consider the case $\L(X_i)<\infty$ for all $1\leq i\leq n$. From Proposition \ref{slnps}(a) we have, $l\in \cup_{i = 1}^n I_{(X_i)}$ implies $\A^{l+1}_{\#}(\nprod)\subsetneq \A^l_{\#}(\nprod)$. Therefore the number of strict inclusions of the chain $\AutB({\bf X})\subseteq \cdots \subseteq \A^m_{\#}({\bf X})\subseteq \cdots \subseteq \A^0_{\#}({\bf X}) = [{\bf X},{\bf X}]_B^B$ is at least cardinality of $\cup_{i = 1}^n I_{(X_i)}$. Observe that,

\begin{align*}
 \#(\cup_{i = 1}^n I_{(X_i)}) =& \sum_{j = 1}^n (-1)^{j+1} \sum_{1\leq i_1< \cdots <i_j\leq n} \#(I_{(X_{i_1})}\cap \cdots \cap I_{(X_{i_j})})\\
 =& \sum_{i = 1}^n \#(I_{(X_i)}) - \sum_{j = 2}^n (-1)^j \sum_{1\leq i_1< \cdots <i_j\leq n} \#(I_{(X_{i_1})}\cap \cdots \cap I_{(X_{i_j})})\\
 =& \sum_{i = 1}^n \L(X_i) - \mathcal{C}
\end{align*}
Therefore we get the desired result.

For the second part of the proof, note that $l\in \cup_{i = 1}^n I_{(X_i)}$ if and only if $\A^{l+1}_{\#}(\nprod)\subsetneq \A^l_{\#}(\nprod)$, by Proposition \ref{slnps}(b). Therefore the number of strict inclusions of the chain $\AutB({\bf X})\subseteq \cdots \subseteq \A^m_{\#}({\bf X})\subseteq \cdots \subseteq \A^0_{\#}({\bf X}) = [{\bf X},{\bf X}]_B^B$ is exactly the cardinality of $\cup_{i = 1}^n  I_{(X_i)}$.

\end{proof}
\begin{corollary}\label{rslscn}
Let $X_1,\ldots,X_n\in \CWBB$. For any $k\geq 0$, if each map in $\A^k_{\#}(\nprod)$ is $k$-reducible, then 
$$\sum_{i = 1}^n \L(X_i) - \mathcal{C} \leq \L(\nprod) \leq \max \big \{\N(X_1),\ldots,\N(X_n) \big \}.$$
\end{corollary}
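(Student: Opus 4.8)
The plan is to assemble the stated chain of inequalities from results already established, since this Corollary is essentially a repackaging of the self-length lower bound together with the self-closeness computation for higher fibred products. Thus the strategy is to prove the two inequalities separately and concatenate them, rather than to develop any new machinery.

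For the lower bound $\sum_{i=1}^n \L(X_i) - \mathcal{C} \leq \L(\nprod)$, I would simply invoke the first assertion of Theorem \ref{slonps}. The proof of that part rests only on Proposition \ref{slnps}(a), whose argument uses no reducibility hypothesis; consequently the lower bound is available under (indeed, without need of) the present assumptions. For the upper bound I would chain two facts. First, Proposition \ref{rbslsc}, applied to the space $\nprod \in \CWBB$, yields $\L(\nprod) \leq \N(\nprod)$. Second, the hypothesis that every map in $\A^k_{\#}(\nprod)$ is $k$-reducible is precisely what the equality clause of Theorem \ref{escnp} requires, whence $\N(\nprod) = \max\{\N(X_1),\ldots,\N(X_n)\}$. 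Combining these two gives $\L(\nprod) \leq \max\{\N(X_1),\ldots,\N(X_n)\}$, and concatenating with the lower bound produces the desired chain.

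There is no genuine obstacle in this argument: all of the substance sits in Theorems \ref{slonps} and \ref{escnp} and in Proposition \ref{rbslsc}. The only point worth flagging is the bookkeeping about hypotheses, namely that the single $k$-reducibility assumption is exactly what triggers the equality version of the self-closeness theorem, while the self-length lower bound is unconditional; so both halves of the chain are legitimately available at once under the stated hypothesis.
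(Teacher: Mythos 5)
Your proposal is correct and follows exactly the same route as the paper: the lower bound is quoted from Theorem \ref{slonps}, and the upper bound is obtained by chaining $\L(\nprod)\leq \N(\nprod)$ from Proposition \ref{rbslsc} with the equality $\N(\nprod)=\max\{\N(X_1),\ldots,\N(X_n)\}$ from Theorem \ref{escnp} under the $k$-reducibility hypothesis. Your remark that the lower bound holds unconditionally is accurate but does not alter the argument.
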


\begin{proof}
The left hand side inequality immediately follows from above Theorem \ref{slonps}. From Proposition \ref{rbslsc} we have $\L({\bf X})\leq \N({\bf X})$. Moreover for any $k\geq 0$, each map in $\A^k_{\#}(\nprod)$ is $k$-reducible, therefore $\N({\bf X}) = \max \big \{\N(X_1),\ldots,\N(X_n) \big \}$ from Theorem \ref{escnp}. Hence the desired result.

\end{proof}

\begin{example}
Consider the ex-spaces $\big(B\times K(G,m_i),p_i,s_i\big)$ over $B$ with $p_i$ first coordinate projection. Take $B = M(G,n)$ with the assumptions $m_i\leq n$ and $m_i\neq m_j$ for all $1\leq i<j\leq n$. Observe that,
$$\big(B\times K(G,m_1)\big)\times_B \cdots \times_B \big(B\times K(G,m_n)\big)\cong B\times K(G,m_1)\cdots\times K(G,m_n),$$ 
where the right hand side is an ex-space over $B$ with first factor projection. For any given $k\geq 0$,
each map in $\AAA^k_{\#}\big(K(G,m_1)\times \cdots \times K(G,m_n)\big)$ is $k$-reducible, so using Corollary \ref{kred} we say that each map in $\A^k_{\#}\big(B\times K(G,m_1)\times \cdots \times K(G,m_n)\big)$ is $k$-reducible. From Lemma \ref{rbtsn} we have,
$$\N\big(M(G,n)\times K(G,m_i)\big) = \NA\big(K(G,m_i)\big) = m_i, ~~1\leq i\leq n.$$
 Moreover from Equation \ref{eq4} we have $I_{\big(M(G,n)\times K(G,m_i)\big)} = \{m_i-1\}$ for $1\leq i\leq n$. Therefore $\mathcal{C} = 0$ and so from Corollary \ref{rslscn} we have  
 $$n\leq \L\big(M(G,n)\times K(G,m_1)\times \cdots \times K(G,m_n)\big)\leq \max \{m_1,\ldots,m_n\}.$$ 
Hence  $$\L\big(M(G,n)\times K(G,m_1)\times \cdots \times K(G,m_n)\big) = n.$$
\end{example}

\end{document}